\pgfplotsset{compat=1.12}
\newtheorem{theorem}{Theorem}
\newtheorem{proposition}{Proposition}
\newtheorem{lemma}{Lemma}
\newtheorem{corollary}{Corollary}
\title{Exploiting symmetry in network analysis}
\author[1*]{Rub\'en J.~S\'anchez Garc\'ia}
\affil[1]{Mathematical Sciences, University of Southampton, University Road, Southampton SO17 1BJ, United Kingdom}
\affil[*]{R.Sanchez-Garcia@soton.ac.uk}
\keywords{Network symmetry, Network motifs, Graph Laplacian}
\begin{abstract}
Virtually all network analyses involve structural measures between pairs of vertices, or of the vertices themselves, and the large amount of symmetry present in real-world complex networks is inherited by such measures. This has practical consequences which have not yet been explored in full generality, nor systematically exploited by network practitioners. Here we study the effect of network symmetry on arbitrary network measures, and show how this can be exploited in practice in a number of ways, from redundancy compression, to computational reduction. We also uncover the spectral signatures of symmetry for an arbitrary network measure such as the graph Laplacian. Computing network symmetries is very efficient in practice, and we test real-world examples up to several million nodes. 
Since network models are ubiquitous in the Applied Sciences, and typically contain a large degree of structural redundancy, our results are not only significant, but widely applicable.
\end{abstract}
\begin{document}

\flushbottom
\maketitle

\thispagestyle{empty}
Network models of real-world complex systems have been extremely successful at revealing structural and dynamical properties of these systems \cite{newman2010networks}. The success of this approach is due to its simplicity, versatility, and surprising universality, with common properties and principles shared by many disparate systems \cite{watts1998collective, barabasi1999emergence,milo2002network}. 

One property of interest is the presence of structural redundancies, which manifest themselves as symmetries in a network model. Symmetries relate to system robustness \cite{tononi1999measures,albert2002statistical}, as they identify structurally equivalent nodes, and can arise from replicative growth processes such as duplication \cite{chung2003duplication}, evolution from basic principles \cite{xiao2008emergence}, or functional optimisation \cite{nishikawa2016network}, and can be arbitrarily generated in model graphs\cite{Klickstein2018generating}. It has been shown that real-world networks possess a large number of symmetries \cite{macarthur2008symmetry, xiao2008network, xiao2008emergence, macarthur2009spectral, pecora2014cluster}, and that this has important consequences for network structural \cite{macarthur2008symmetry}, spectral \cite{macarthur2009spectral} and dynamical\cite{golubitsky2003symmetry, stewart2003symmetry, golubitsky2005patterns, aguiar2018synchronization, sorrentino2017symmetries} properties, for instance cluster synchronisation \cite{nicosia2013remote, pecora2014cluster, sorrentino2016complete, sorrentino2016approximate, schaub2016graph, pecora2017discovering, siddique2018symmetry}. 
 
Crucially, network symmetries are inherited by any measure or metric on the network, that is, any structural measurement between pairs of vertices (such as distances), vertex-valued measurements (such as centrality) or even matrices derived from the network (such as the graph Laplacian). However, the effects of symmetry on arbitrary network measures is not yet fully understood nor exploited in network analysis, even though the network symmetry of the large but sparse graphs typically found in applications can be effectively computed and manipulated.

In this article, we show how a network representation of an arbitrary pairwise measure inherits the same symmetries of the original network, and uncovers the structural and spectral signatures of symmetry on this network representation. Namely, for an arbitrary network measure, we identify subgraphs where the symmetry is generated (symmetric motifs) and their structure, use the network quotient to quantify the redundancy due to symmetry, develop general compression algorithms that eliminate this redundancy, and study the reduction in computational time obtained by exploiting the presence of symmetries. The eigenvalues and eigenvectors of a network measure also reflect the presence of symmetry: we show how symmetry explains most of the discrete spectrum of an arbitrary network measure, predict the most significant eigenvalues due to symmetry, and use this to develop a fast symmetry-based eigendecomposition algorithm. We achieve remarkable empirical results in our real-world test networks: compression factors up to 26\% of the original size, over 90\% of the discrete spectrum explained by symmetry, and full eigendecomposition computations in up to 13\% of the original time, demonstrating the practical use of symmetry in network analysis. We also discuss the implications of network symmetry in vertex measures. We illustrate our approach in several network measures, providing novel results of independent interest for the shortest path distance, communicability, the graph Laplacian, closeness centrality and eigenvector centrality. To facilitate dissemination, we provide full implementations of all the algorithms described in this article\cite{BitBucket}. Our results supersede \cite{macarthur2008symmetry, macarthur2009spectral} and help to understand other network symmetry results thereafter \cite{xiao2008network, xiao2009efficiently, wang2012symmetry, karalus2015symmetry, nyberg2015mesoscopic, do2012engineering}. We focus on structural and spectral properties, and symmetries commonly found in real-world networks: For a more general study of arbitrary symmetry in (networks of) dynamical systems, see\cite{golubitsky2003symmetry, stewart2003symmetry, golubitsky2005patterns, aguiar2018synchronization}. To keep our account as self-contained as possible, we include material well known in the algebraic graph theory literature e.g.~\cite{biggs1993algebraic, godsil2013algebraic, cvetkovic2010introduction, brouwer2011spectra}, without any originality claim.

\section*{Results}
\subsection*{Symmetry in complex networks}
The notion of network symmetry is captured by the mathematical concept of \emph{graph automorphism} \cite{biggs1993algebraic}. This is a permutation of the vertices (nodes) preserving adjacency, and can be expressed in matrix form using the adjacency matrix of the network. If a network  (mathematically, a finite simple graph) $\mathcal G$ has $n$ vertices, labelled $1$ to $n$, its \emph{adjacency matrix} $A=(a_{ij})$ is an $n \times n$ matrix with $(i,j)$-entry $a_{ij}=1$ if there is an edge between nodes $i$ and $j$, and zero otherwise. A graph automorphism $\sigma$ is then a permutation, or relabelling, of the vertices $v \mapsto \sigma(v)$ such that $(\sigma(i),\sigma(j))$ is an edge only if $(i,j)$ is an edge, or, equivalently, $a_{ij} = a_{\sigma(i)\sigma(j)}$ for all $i$, $j$.
In matrix terms, this can be written as 
\begin{equation}\label{eqn:autP}
	AP = PA\,, 
\end{equation}
where $P$ is the permutation matrix corresponding to $\sigma$, that is, the matrix with $(i,j)$-entry 1 if $\sigma(i)=j$, and 0 otherwise. The automorphisms of a graph form a mathematical structure called a group, the \emph{automorphism group} of $\mathcal G$.  In principle, any (finite) group $G$ is the automorphism group of some graph $\mathcal G$ \cite{biggs1993algebraic}, but, in practice, real-world networks exhibit very specific types of symmetries generated at some small subgraphs called \emph{symmetric motifs} \cite{macarthur2008symmetry}. Namely, we can partition the vertex set into the \emph{asymmetric core} of fixed points $V_0$ (an automorphism $\sigma$ \emph{moves} a vertex $i \in V$ if $\sigma(i) \neq i$, and \emph{fixes} it otherwise), and the vertex sets $M_i$ of the symmetric motifs, 
\begin{equation}\label{eqn:VertexDecomp}
	V = V_0 \cup M_1 \cup \ldots \cup M_m,
\end{equation} 
as shown in Fig.~\ref{fig:toy}a for a toy example. Equation \eqref{eqn:VertexDecomp} is called the \emph{geometric decomposition} of the network\cite{macarthur2008symmetry}. 

\begin{figure}[h!]
\centering
\includegraphics[width=1\linewidth]{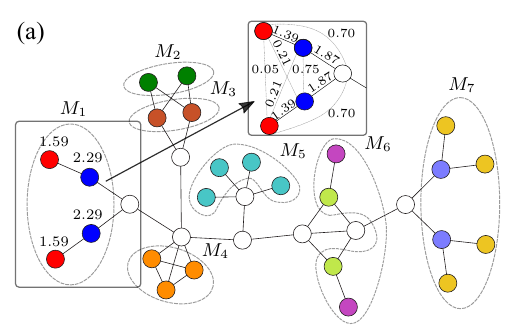}\\[3pt]
\includegraphics[width=0.94\linewidth]{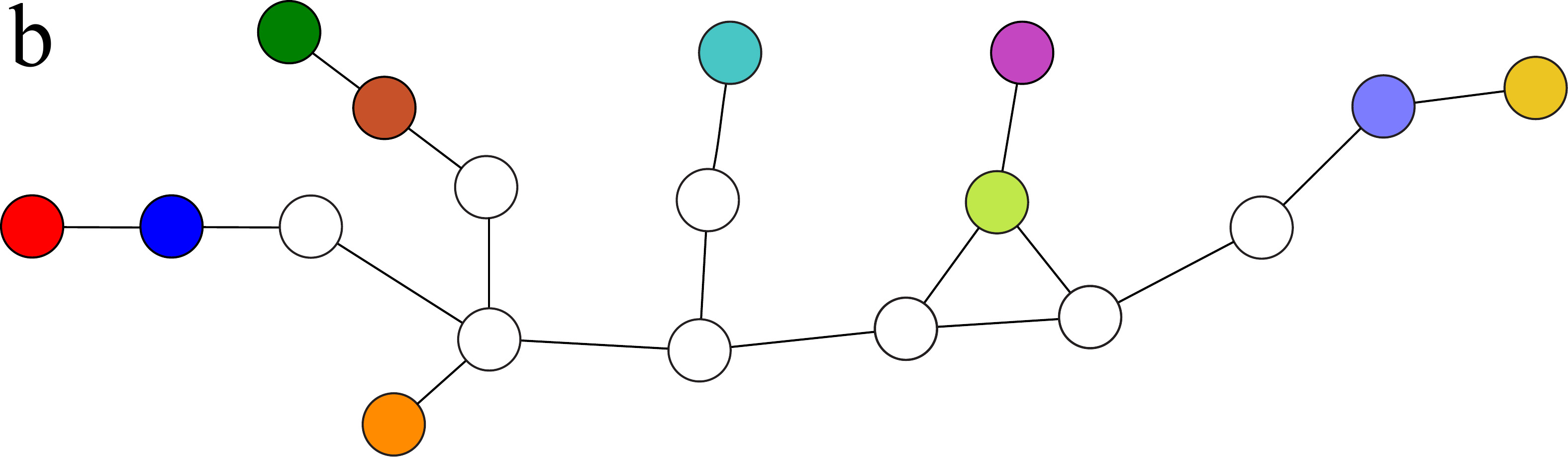}
\caption{\textbf{Toy example of a symmetric network.} \small (a) Decomposition into asymmetric core (white nodes) and 7 symmetric motifs $M_1$ to $M_7$. Each motif can be decomposed into one or more orbits (shown by colour) of structurally indistinguishable nodes. 
Motifs $M_1$ to $M_6$ are \emph{basic}: one or more orbits of the same size, and every permutation of the vertices in an orbit can be extended to a network automorphism. Motif $M_7$, a tree, is not basic: in the orbit of yellow vertices, labelled 1 to 4 (top to bottom), the permutation sending 1 to 3 must move 2 to 4 (as 1 and 2 share a common neighbour).
A vertex measure (here subgraph centrality\cite{estrada2005subgraph} shown on $M_1$) is constant on orbits. Inset: A pairwise measure (here communicability\cite{estrada2008communicability} shown near $M_1$) inherits all the network symmetries. (b) Quotient network (no loops, edge directions, or weights shown) consisting on one vertex per fixed point (white node) and per orbit (coloured node).}
\label{fig:toy}
\end{figure}

Real-world networks typically exhibit a core of fixed points (\emph{asymmetric core}), and a large number of relatively small symmetric motifs, where all the network symmetry is generated, and hence the size of the automorphism group is often extremely large, in stark contrast to random graphs, typically asymmetric \cite{erdos1963asymmetric}. However, each symmetry is the product (composition) of automorphisms permuting a very small number of vertices within a symmetric motif. For example, the toy graph in Fig.~\ref{fig:toy}a has $2^7\times3!\times4!=18,432$ symmetries (size of the automorphism group) but they generated by (all combinations of) just 10 permutations, each permuting a few vertices within a symmetric motif (one permutation per motif except two for $M_4$, $M_5$ and $M_7$). 

Each symmetric motif can be further subdivided into orbits of structurally indistinguishable nodes (shown by colour in Fig.~\ref{fig:toy}a), which play the same structural role in the network and, therefore, contribute to network redundancy and thus to the robustness of the underlying system. Our notion of structurally indistinguishable nodes (nodes in the same orbit of the automorphism group) extends the notion of structurally equivalent nodes found in the social sciences\cite{borgatti2015structural}, that is, nodes with the same set of neighbours. It is not equivalent: nodes in the same orbit may not have the same neighbours (e.g.~$M_1$, $M_6$ or $M_7$ in Fig.~\ref{fig:toy}a).

Network symmetries of (possibly very large) real-world networks can be effectively computed, stored and manipulated (see Methods). For instance, we computed generators of the automorphism group, and the subsequent geometric decomposition, for real-world networks up to several million nodes and edges in a few seconds (see $t_1$ and $t_2$ in Table \ref{Table1}). 

Most symmetric motifs in real-world networks (typically over 90\%, see the $bsm$ column in Table \ref{Table1}) are of a very specific type, called \emph{basic}\cite{macarthur2008symmetry}: they are made of one or more orbits of the same size, and every permutation of the vertices in each orbit is realisable, that is, can be extended to a network automorphism (see Fig.~\ref{fig:toy}). Basic symmetric motifs (BSMs) have a very constrained structure\cite{macarthur2009spectral}, which we will generalise to arbitrary network measures and exploit throughout this article. Non-basic symmetric motifs (typically branched trees, as $M_7$ in Fig.~\ref{fig:toy}) are called \emph{complex}; they are rare and can either be studied on a case-by-case basis, or removed from the symmetry computation altogether (by ignoring the symmetries generated by them). 

The definition of network automorphism Eq.~\eqref{eqn:autP} carries to an arbitrary $n \times n$ real matrix $A=(a_{ij})$. Any such matrix can be seen as the adjacency matrix of a network with $n$ vertices labelled 1 to $n$, and an edge (link) from node $i$ to node $j$ with weight $a_{ij}$ if $a_{ij} \neq 0$, and no such edge if $a_{ij}=0$. This means that an automorphism does not only preserve edges, but also their weights and directions. This may not be a realistic assumption for real-world weighted networks, where the weights often come from observational or experimental data, but it applies to the matrix representing a network structural measure, as we illustrate in Fig.~\ref{fig:new} and explain next. 

\begin{figure}[h!]
\centering
\begin{subfigure}{0.2\textwidth}
\hspace*{-22pt}\raisebox{-0.6cm}{\begin{tikzpicture}
	\node at (-0.52,1) {(a)};
	\Vertex[x=0,y=1,size=0.3,color=red,label=1,fontscale=1.1,style=thin]{1}
	\Vertex[x=0,y=0,size=0.3,color=red,label=2,fontscale=1.1,style=thin]{2}
	\Vertex[x=1,y=1,size=0.3,color=blue!70!white,label=3,fontscale=1.1,style=thin]{3}
	\Vertex[x=1,y=0,size=0.3,color=blue!70!white,label=4,fontscale=1.1,style=thin]{4}
	\Vertex[x=2,y=0.5,size=0.3,opacity=0,label=5,fontscale=1.1,style=thin]{5}
	\Edge[lw=0.8pt](1)(3)
	\Edge[lw=0.8pt](2)(4)
	\Edge[lw=0.8pt](3)(5)
	\Edge[lw=0.8pt](4)(5)
\end{tikzpicture}}
\end{subfigure}
\begin{subfigure}[t]{0.2\textwidth}
\scalebox{0.8}{\renewcommand*{\arraystretch}{0.9}
$A=
\begin{pmatrix}
	0&0&1&0&0\\0&0&0&1&0\\1&0&0&0&1\\0&1&0&0&1\\0&0&1&1&0
\end{pmatrix}$}
\end{subfigure}
\begin{subfigure}{0.18\textwidth}
 \hspace*{-10pt}\begin{tikzpicture}
	 \node at (-0.55,1.2) {(b)};
	\Vertex[x=0,y=1,size=0.3,color=red,label=1,fontscale=1.1,style=thin]{1}
	\Vertex[x=0,y=0,size=0.3,color=red,label=2,fontscale=1.1,style=thin]{2}
	\Vertex[x=1,y=1,size=0.3,color=blue!70!white,label=3,fontscale=1.1,style=thin]{3}
	\Vertex[x=1,y=0,size=0.3,color=blue!70!white,label=4,fontscale=1.1,style=thin]{4}
	\Vertex[x=2,y=0.5,size=0.3,opacity=0,label=5,fontscale=1.1,style=thin]{5}
	\Edge[lw=0.8pt,loopposition=90,label=1.59,fontscale=0.8,position=below](1)(1)
	\Edge[lw=0.8pt,loopposition=-90,label=1.59,fontscale=0.8,position=above](2)(2)
	\Edge[lw=0.8pt,loopposition=90,label=2.29,fontscale=0.8,position=below](3)(3)
	\Edge[lw=0.8pt,loopposition=-90,label=2.29,fontscale=0.8,position=above](4)(4)
	\Edge[lw=0.8pt,loopposition=0,label=3.42,fontscale=0.8,position=left](5)(5)
	\Edge[lw=0.8pt,label=0.05,fontscale=0.8,position=left](1)(2)
	\Edge[lw=0.8pt,label=1.39,fontscale=0.8,position=above](1)(3)
	\Edge[lw=0.8pt](1)(4)
	\Edge[lw=0.8pt,label=0.70,fontscale=0.8,position={above=0pt},distance=0.8,bend=-5,NotInBG](1)(5)
	\Edge[lw=0.8pt](2)(3)
	\Edge[lw=0.8pt,label=1.39,fontscale=0.8,position=below](2)(4)
	\Edge[lw=0.8pt,label=0.70,fontscale=0.8,position={below=-0.5pt},distance=0.8,bend=5,NotInBG](2)(5)
	\Edge[lw=0.8pt,label=0.75,fontscale=0.8,position=right](3)(4)
	\Edge[lw=0.8pt,label=1.87,fontscale=0.8,position=above,bend=35](3)(5)
	\Edge[lw=0.8pt,label=1.87,fontscale=0.8,position=below,bend=-35](4)(5)
	\draw[very thin,->,draw=black!80!white] (-0.3,0.8)  to[out=-45,in=180] (0.25,0.7);
	\node [left] at (-0.2,0.9) [style={scale=0.55}] {$0.21$};
	\draw[very thin,->,draw=black!80!white] (-0.3,0.2)  to[out=45,in=180] (0.25,0.3);
	\node [left] at (-0.2,0.1) [style={scale=0.55}] {$0.21$};
\end{tikzpicture}
\end{subfigure}
\hspace{2pt}
\begin{subfigure}{0.26\textwidth}
\scalebox{0.8}{\renewcommand*{\arraystretch}{1}
$e^A=
\begin{pmatrix}
	1.59&0.05&1.39&0.21&0.70\\0.05&1.59&0.21&1.39&0.70\\1.39&0.21&2.29&0.75&1.87\\0.21&1.39&0.75&2.29&1.87\\0.70&0.70&1.87&1.87&3.42
\end{pmatrix}$}
\end{subfigure}
\begin{subfigure}{0.2\textwidth}
\hspace*{-22pt}\begin{tikzpicture}
	\node at (-0.54,1) {(c)};
	\Vertex[x=0,y=1,size=0.3,color=red,label=1,fontscale=1.1,style=thin]{1}
	\Vertex[x=0,y=0,size=0.3,color=red,label=2,fontscale=1.1,style=thin]{2}
	\Vertex[x=1,y=1,size=0.3,color=blue!70!white,label=3,fontscale=1.1,style=thin]{3}
	\Vertex[x=1,y=0,size=0.3,color=blue!70!white,label=4,fontscale=1.1,style=thin]{4}
	\Vertex[x=2,y=0.5,size=0.3,opacity=0,label=5,fontscale=1.1,style=thin]{5}
	\Edge[lw=0.8pt,loopposition=90,label=1,Math,fontscale=1,position=below](1)(1)
	\Edge[lw=0.8pt,loopposition=-90,label=1,Math,fontscale=1,position=above](2)(2)
	\Edge[lw=0.8pt,loopposition=90,label=2,Math,fontscale=1,position=below](3)(3)
	\Edge[lw=0.8pt,loopposition=-90,label=2,Math,fontscale=1,position=above](4)(4)
	\Edge[lw=0.8pt,loopposition=0,label=2,Math,fontscale=1,position=left](5)(5)
	\Edge[lw=0.8pt,label=-1,Math,fontscale=1,position=above](1)(3)
	\Edge[lw=0.8pt,label=-1,Math,fontscale=1,position=below](2)(4)
	\Edge[lw=0.8pt,label=-1,Math,fontscale=1,position=above](3)(5)
	\Edge[lw=0.8pt,label=-1,Math,fontscale=1,position=below](4)(5)
\end{tikzpicture}
\end{subfigure}
\begin{subfigure}{0.2\textwidth}
\hspace*{-10pt}\scalebox{0.8}{\renewcommand*{\arraystretch}{0.9}
$L=
\begin{pmatrix}
	1&0&-1&0&0\\0&1&0&-1&0\\-1&0&2&0&-1\\0&-1&0&2&-1\\0&0&-1&-1&2
\end{pmatrix}$}
\end{subfigure}
\caption{\textbf{Structural network measures.} \small (a) Neigh\-bour\-hood of motif $M_1$ (Fig.~\ref{fig:toy}) and its adjacency matrix. Network and matrix representation of two structural (full, respectively sparse) network measures, (b) the exponential matrix (a measure of vertex communicability\cite{estrada2008communicability}), and (c) the graph Laplacian. In each case, the vertices are numbered 1 to 5 and coloured by orbit, and a non-zero $(i,j)$-entry in the matrix corresponds to the edge weight between vertices $i$ and $j$. Note that the network automorphism swapping vertices 1 and 2, and 3 and 4, preserves the network and matrix structure in all three cases. In general, any symmetry of a network is also a symmetry of the network representation of any structural network measure. 
}
\label{fig:new}
\end{figure}

\begin{table*}
\small \centering
\begin{tabular}{l rr r cc rrr rrr rrr}
Name 		& $n_\mathcal{G}$ 		& $m_\mathcal{G}$ 			& $gen$ 		& $t_1$ 	& $t_2$	& $sm$ & $bsm$	& $mv$ & $\widetilde{n}_\mathcal{Q}$ & $\widetilde{m}_\mathcal{Q}$	& $ext_s$ & $int_f$ & $c_\text{full}$ & $sp$\\
\hline
HumanDisease	& 1,419 		& 2,738		& 713		& 0.00 	& 0.16	& 272 	& 96.0
	& 71.0 	& 48.3		& 50.4	& 83.3 	& 10$^{-3}$ & 27.2	& 11.3\\
Yeast		& 1,647		& 2,736		& 380		& 0.00 	& 0.01	& 149 	& 99.3
	& 33.3 	& 76.3	&  83.5 	& 98.8 	& 10$^{-3}$	& 58.4	& 44.4\\
OpenFlights	& 3,397		& 19,230		& 732		& 0.00 	& 0.11	& 321 	& 93.5
	& 32.4 	& 77.3	& 94.4	& 99.3 	& 10$^{-3}$	& 63.5	& 46.2\\
USPowerGrid	& 4,941		& 6,594		& 414		& 0.00 	& 0.09	& 302 	& 97.4		
	& 16.7 	& 90.2	& 91.3	& 97.6 	& 10$^{-4}$	& 83.9	& 73.3\\
HumanPPI		& 9,270		& 36,918 		& 972		& 0.00 	& 0.12	& 437 	& 100		
	& 15.3 	& 89.5	& 97.0	& 99.9 	& 10$^{-4}$	& 80.1	& 71.6\\
Astro-Ph		& 17,903		& 196,972	& 3,232		& 0.01 	& 0.21	& 1,682 	& 99.4		
	& 27.5 	& 81.9	& 80.4	& 95.5 	& 10$^{-4}$	& 67.4	& 54.9\\
InternetAS	& 34,761		& 107,720	& 15,587		& 0.03 	& 0.29	& 3,189 	& 99.9		
	& 54.3 	& 55.0	& 78.2	& 99.9 	& 10$^{-5}$	& 30.3	& 16.7\\
WordNet		& 145,145	& 656,230	& 52,152		& 0.18 	& 0.62	& 28,456 	& 92.0	
	& 60.0 	& 60.1	& 58.0	& 89.9 	& 10$^{-5}$	& 49.3	& 21.6\\
Amazon		& 334,863 	& 925,872	& 32,098		& 0.20 	& 0.39	& 23,302 	& 99.8	
	& 16.8 	& 90.3	& 89.0	& 99.0 	& 10$^{-6}$	& 81.6	& 73.6\\
Actors		& 374,511 	& 15,014,839	& 182,803	& 0.95 	& 1.38	& 36,703 	& 99.9		
	& 58.6 	& 51.2	& 66.4	& 90.4 	& 10$^{-5}$	& 26.2	& 13.4\\
InternetAS-skitter	& 1,694,616	& 11,094,209	& 319,738	& 1.71 	& 4.17	& 84,675 	& 99.1 
	& 19.7 	& 85.4	& 92.8	& 99.9 	& 10$^{-6}$	& 73.5	& 62.3\\
CaliforniaRoads	& 1,957,027	& 2,760,388	& 36,430		& 0.47 	& 0.16	& 35,210 	& 98.8
	& 4.0 	& 97.9	& 98.4	& 99.7 	& 10$^{-7}$	& 96.3	& 93.9\\
LiveJournal	& 5,189,808	& 48,687,945	& 410,575	& 8.02 	& 3.59	& 245,211 & 99.9
	& 12.7 	& 92.1	& 96.5	& 99.7 	& 10$^{-7}$	& 84.8	& 78.0\\	
\hline
\end{tabular}
\caption{\textbf{Symmetry in some real-world networks.} \small For each test network, we show the number of vertices ($n_\mathcal{G}$), edges ($m_\mathcal{G}$), number of generators ($gen$) of the automorphism group (sizes, $10^{153}$ to $10^{197,552}$, not shown), computing times of generators ($t_1$) and geometric decomposition ($t_2$), in seconds, number of symmetric motifs ($sm$) and proportion of basic symmetric motifs ($bsm$), proportion of vertices moved by an automorphism ($mv$), proportion of vertices ($\widetilde{n}_\mathcal{Q}=n_\mathcal{Q}/n_\mathcal{G}$) and edges ($\widetilde{m}_\mathcal{Q}=m_\mathcal{Q}/m_\mathcal{G}$) in the quotient, proportion of external edges in the sparse case ($ext_s$, in percentage), and of internal edges in the full case ($int_f$, closest power of 10), full compression ratio ($c_\text{full} = \widetilde{n}_\mathcal{Q}^2$), and spectral computational reduction ($sp = \widetilde{n}_\mathcal{Q}^3$), all for the largest connected component. The proportion of vertices in the \emph{basic} quotient ($\widetilde{n}_\mathcal{Q_\text{basic}}$, not shown) is within 1\% of $\widetilde{n}_\mathcal{G}$ except for HumanDisease ($\widetilde{n}_\mathcal{Q_\text{basic}}=52.2\%$), OpenFlights ($79.7\%$), USPowerGrid ($91.6\%$) and WordNet ($79.2\%$), and similar results hold for $\widetilde{m}_\mathcal{Q_\text{basic}}$. 
Datasets available at \cite{konect2016topology}, except HumanDisease \cite{HumanDisease}, Yeast \cite{Yeast}, and HumanPPI \cite{HumanPPI}. Computations on a desktop computer (3.2 GHz Intel Core i5 processor, 16 GB 1.6 GHz DDR3 memory). All networks are symmetric, although the amount of symmetry (as measured by $mv$ or $\widetilde{n}_\mathcal{Q}$) ranges from several networks with $50\%$ quotient reduction, to CalifornialRoads with only 4\% of vertices participating in any symmetry. However, the effect of compression and computational reduction multiplies as e.g.~$c_\text{full}=\widetilde{n}_\mathcal{Q}^2$ and $sp=\widetilde{n}_\mathcal{Q}^3$, achieving significant results for most of our test networks.}
\label{Table1}
\end{table*}

\subsection*{Structural network measures}
A \emph{(pairwise) structural network measure} is a function $F(i,j)$ on pairs of vertices which satisfies 
\begin{equation}\label{eq:StructuralFunction}
	F(\sigma(i),\sigma(j)) = F(i,j)\	\text{ for all } i, j \in V
\end{equation}
for all automorphisms $\sigma \in \text{Aut}(\mathcal G)$. Since automorphisms identify structurally indistinguishable vertices ($i$ and $\sigma(i)$) and, similarly, edges ($(i,j)$ and $(\sigma(i),\sigma(j))$), structural network measures are (edge) functions that depend on the network structure alone, and not, for example, on node or edge labels, or other meta-data. Most network measures are structural, including graph metrics (e.g.~shortest path), and matrices algebraically derived from the adjacency matrix (e.g.~Laplacian matrix). (We identify matrices $M$  with pairwise measures via $F(i,j)=[M]_{ij}$.) In particular, structural measures are independent of the ordering or labelling of the vertices. In contrast, functions depending, explicitly or implicitly, on some vertex ordering or labelling, are not structural, for example the shortest path length through a given node. 
Our results can be adapted to the presence of node or edge labels, or weights, by restricting to automorphisms preserving the additional structure. For simplicity, here we discuss the unlabelled case only. 

We can encode a structural measure $F$ as a network with adjacency matrix $[F(A)]_{ij}=F(i,j)$ (see Fig.~\ref{fig:new}b and c for two examples), and write \eqref{eq:StructuralFunction} in matrix form as  
\begin{equation}\label{eq:StructuralFunctionMatrix}
	F(A)\, P = P\, F(A), 
\end{equation}
where $P$ is the permutation matrix corresponding to $\sigma$. Comparing this to Eq.~\eqref{eqn:autP}, we see that the network representation of $F$, $F(\mathcal G)$, with adjacency matrix $F(A)$, inherits all the symmetries of $\mathcal G$. In particular, the network $F(\mathcal G)$ has the same decomposition into symmetric motifs Eq.~\eqref{eqn:VertexDecomp}, and orbits, as $\mathcal G$. The BSMs in $F(\mathcal G)$ must occur on the same vertices $M_i$, although they are now all-to-all weighted subgraphs in general (Fig.~\ref{fig:new}b). Nevertheless, they have a very constrained structure: the intra and inter orbit connectivity depends on two parameters only. Namely, each orbit in a BSM is uniquely determined by $\beta=F(v_i,v_i)$ (the connectivity of a vertex with itself) and $\alpha=F(v_i,v_j), i \neq j $ (the connectivity of a vertex with every other vertex in the orbit), for all $v_i, v_j$ in the orbit. Similarly, the connectivity between two orbits $\Delta_1$ and $\Delta_2$ in the same BSM also depends on two parameters: after a suitable reordering $\Delta_1=\{v_1,\ldots,v_n\}$ and $\Delta_2=\{w_1,\ldots,w_n\}$, we have $\delta=F(v_i,w_i)$ and $\gamma=F(v_i,w_j)$ for all $1\le i,j \le n$. (For a proof, see Theorem \ref{thm:BSM} in Methods.) This can be observed in Fig.~\ref{fig:new}c and is represented schematically in Fig.~\ref{fig:uniform}a and b. In particular, each BSM takes a very constrained form in the quotient, as shown schematically in Fig.~\ref{fig:uniform}c and d.

The results in this article apply to arbitrary structural measures, although the two most common cases in practice are the following. We call $F$ \emph{full} if $F(i,j) \neq 0$ for all $i \neq j \in V$ (e.g.~a graph metric), and \emph{sparse} if $F(i,j)=0$ if $a_{ij}=0$, for all $i \neq j \in V$ (e.g.~the graph Laplacian). The graph representation of $F(\mathcal G)$ is an all-to-all weighted graph if $F$ is full, and has a sparsity similar to $\mathcal{G}$ if $F$ is sparse (cf.~Fig.~\ref{fig:new}c).

From now on, we will assume that $\mathcal{G}$ is undirected and $F$ is symmetric, $F(i,j)=F(j,i)$, which may not be the case even if $\mathcal G$ is undirected (e.g.~the transition probability of a random walker $F(i,j)=\tfrac{a_{ij}}{\text{deg}(i)}$), and discuss directed networks and asymmetric measures in the Methods section. 

\begin{figure}
\begin{center}
\includegraphics[width=0.66\linewidth]{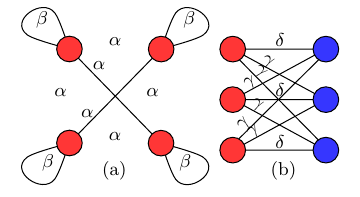}
\raisebox{4.5pt}{\begin{tikzpicture}[scale=1]
	\Vertex[x=0.5,y=1.3,size=0.4,color=red!80!white,style=thin]{1}
	\Edge[lw=0.5pt,loopposition=90](1)(1)
	\node [above] at (0.5,1.9) {\scriptsize $(n-1)\alpha+\beta$};
	\node [below] at (0.5,1.15)  {\scalebox{0.9}{(c)}};
	
	\Vertex[x=-0.2,y=0.05,size=0.4,color=red!80!white,style=thin]{2}
	\Vertex[x=1.2,y=0.05,size=0.4,color=blue!85!white,style=thin]{3}
	\Edge[lw=0.5pt](2)(3)	
	\node [above] at (0.5,0.12) {\scriptsize $(n-1)\gamma+\delta$};
	\node [below] at (0.5,0.0)  {\scalebox{0.9}{(d)}};
\end{tikzpicture}}
\end{center}
\caption{\textbf{Structure of a basic symmetry motif (BSM) for an arbitrary network measure $F$.} \small (a) Every orbit in a BSM is an \emph{$(\alpha,\beta)$-uniform graph} $K_n^{\alpha,\beta}$, the graph with $n$ vertices and adjacency matrix $A=(a_{ij})$ with $a_{ij}=\alpha=F(i,j)$ if $i\neq j$ and $a_{ii}=\beta=F(i,i)$ for some constants $\alpha$ and $\beta$. Here we show an example of one orbit with four nodes and edges labelled by their weights. (b) The connectivity between two orbits $\Delta_1$ and $\Delta_2$ in the same BSM (after a suitable relabelling $\Delta_1 = \{v_1,\ldots,v_n \}$, $\Delta_2 = \{w_1,\ldots,w_n \}$) is given by $\gamma=F(v_i,w_j)$ for $i \neq j$, and $\delta=F(v_i,w_i)$, the \emph{$(\delta,\gamma)$-uniform join} of the two orbits. Here we show an example of two orbits (shown by colour) with three vertices each and edges labelled by their weights. (c) In the quotient, the BSM orbit becomes a single vertex with a self-loop weighted by $(n-1)\alpha+\beta$, and (d) the two orbits are joined by an edge weighted by $(n-1)\gamma+\delta$. 
Here we show the quotients (c) and (d) of the previous BSMs (a) and (b) respectively. Note that, by annotating each orbit in the quotient by $n$ and $\alpha$ (or $\beta$), and each intra-motif edge by $\gamma$ (or $\delta$), we can recover each BSM from such annotated quotient.}
\label{fig:uniform}
\end{figure}

\subsection*{Quotient network}
The formal procedure to quantify and eliminate structural redundancies in a network is via its \emph{quotient network}. This is the graph with one vertex per orbit or fixed point (see Fig.~\ref{fig:toy}b) and edges representing average connectivity. Formally, if $A$ is the $n \times n$ adjacency matrix of a graph $\mathcal G$, the \emph{quotient network} with respect to a partition of the vertex set $V=V_1 \cup \ldots \cup V_m$ is the graph $\mathcal Q$ with $m \times m$ adjacency matrix the \emph{quotient matrix} $Q(A)=(b_{kl})$ defined by  
\begin{equation}\label{eqn:quotient}
	b_{kl} = \frac{1}{|V_k|} \sum_{\substack{ i \in V_k \\ j \in V_l}} a_{ij},
\end{equation}
the average connectivity from a vertex in $V_k$ to all vertices in $V_l$. 
There is an explicit matrix equation for the quotient. Consider the $n \times m$ \emph{characteristic matrix} $S$ of the partition, that is, $[S]_{ik}=1$ if $i \in V_k$, and zero otherwise, and the diagonal matrix $\Lambda=\textup{diag}(n_1,\ldots,n_m)$, where $n_k=|V_k|$. Then
\begin{equation}\label{eq:QuotientMatrixDef}
	Q(A) = \Lambda^{-1} S^T A\, S.
\end{equation}
The quotient network is a directed and weighted network in general. An alternative is to use the \emph{symmetric quotient}, with adjacency matrix $Q_\text{sym}(A)=\Lambda^{-1/2} S^T A\, S\Lambda^{-1/2}$, which is weighted but undirected. Note that $Q(A)$ and $Q_\text{sym}(A)$ are spectrally equivalent matrices: they have the same eigenvalues, with eigenvectors related by the transformation $\mathbf{v} \mapsto \Lambda^{1/2}\mathbf{w}$. 

In the context of symmetries, we will always refer to the quotient with respect to the partition of the vertex set into orbits. This quotient removes all the original symmetries from the network: if $\sigma(v_i)=v_j$, then $v_i$ and $v_j$ are in the same orbit and hence represented by the same vertex in the quotient network, which is then fixed by $\sigma$. We can, therefore, infer and quantify properties arising from redundancy alone by comparing a network to its quotient. The quotients of real-world networks are often significantly smaller (in vertex and edge size) than the original networks\cite{macarthur2008symmetry, xiao2008network} (see $\widetilde{n}_\mathcal{Q}$ and $\widetilde{m}_\mathcal{Q}$ in Table \ref{Table1}), and this reduction quantifies the structural redundancy present in an empirical network. Not every real-world network is equally symmetric, and, in our test networks, we give examples of network quotient reductions ranging from about 50\% to just 2\%. 
Computing the network quotient involves multiplication by very sparse matrices ($\Lambda$ is diagonal and $S$ has one non-zero element per row) and hence is computationally efficient (a few seconds in all our test networks).

\begin{figure}
\begin{tikzpicture}[scale=0.95]
\begin{axis}[
	xlabel={Size $(n_\mathcal{G}^2)$},
	ylabel={Compression ratio $\left(n_\mathcal{Q}^2/n_\mathcal{G}^2\right)$},
	xmode=log,
	legend pos = south east,
	legend style = {font=\small},
	xmin=10^5.2,
	xmax=10^14.4,
	ymin=0, 
	ymax=105,
	ytick = {0, 10, 20, 30, 40, 50, 60, 70, 80, 90, 100},
	yticklabel=\pgfmathprintnumber{\tick}\,\%,
]
\addplot+ [	
	only marks, 
	mark=o, 
	point meta=explicit symbolic, 
	nodes near coords, 
	every node near coord/.append style={font=\scriptsize, color=black},
] 
table [ 
	x expr=\thisrowno{0}^2, 
	meta=network, 
] {compression_predicted.data};
\addplot+ [	
	only marks, 
	mark=x, 
] 
table [ 
	x expr=\thisrowno{0}^2,  
] {compression_actual.data};
\legend{predicted, actual}
\end{axis}
\end{tikzpicture}
\caption{\textbf{Redundancy in some real-world networks.} \small 
Amount of symmetry present on a typical (full) network measure for our test networks (Table \ref{Table1}). The predicted values, $c_\text{full} = \left(n_\mathcal{G}/n_\mathcal{Q}\right)^2$ coincide (up to $0.01\%$) with the compression ratio (shown as `actual') obtained by our lossless compression algorithm for the shortest path distance, and communicability (exponential matrix), for our smallest seven networks (memory limit in our computer). After decompression, we recover the original matrix exactly for the shortest path distance, and up to a small numerical error ($1.16 \times 10^{-4}$ mean relative error) for communicability.}
\label{fig:compression}
\end{figure}

\subsection*{Redundancy in network measures} 
The amount structural redundancy on a network (measured by $\widetilde{n}_\mathcal{Q}=n_\mathcal{G}/n_\mathcal{Q}$) is amplified in the computation of a typical (full) network measure (see Eq.~\eqref{eq:cfull} below). It is therefore natural to ask how to quantify, and eliminate, the symmetry-induced redundancy. If a network has $n_\mathcal{G}$ vertices and $n_\mathcal{Q}$ orbits, there are $n_\mathcal{G}^2$ pairs of vertices but only $n_\mathcal{Q}^2$ pairs of orbits, achieving a reduction, or compression ratio, of
\begin{equation}\label{eq:cfull}
	c_\text{full}=\left(\frac{n_\mathcal{G}}{n_\mathcal{Q}}\right)^2
\end{equation}
for a full network measure, typically much smaller than the ratio $\widetilde{n}_\mathcal{Q}=n_\mathcal{G}/n_\mathcal{Q}$. On the other hand, for a sparse network measure, we only need to consider edge values, hence the reduction is the ratio between the number of edges in the graph and in its quotient
\begin{equation}
	c_\text{sparse}=\frac{m_\mathcal{G}}{m_\mathcal{Q}}.
\end{equation}
For an arbitrary network measure, its compression ratio, which measures the redundancy present (zero values excluded), will range between $c_\text{full}$ and $c_\text{sparse}$. The compression ratios $c_\text{full}$ and $c_\text{sparse}=\widetilde{m}_\mathcal{Q}$ are shown on Table \ref{Table1} for our test networks. We found a remarkable amount of redundancy (up to 70\%) due to symmetry alone (Fig.~\ref{fig:compression}). 


\subsection*{Symmetry compression}
A natural question, with practical consequences for network analysis, is whether we can easily `eliminate' the symmetry-induced redundancies. This means storing only one value of a network function for each orbit of structurally indistinguishable nodes or edges, all sharing the same such value. Although this has been explored in particular cases, such as shortest path distances\cite{xiao2009efficiently}, here we present a general treatment. 
A simple method is to use the quotient matrix
\begin{equation}
	B=S^TAS,
\end{equation}
which is easier to store than $\Lambda^{-1}S^TAS$. This matrix achieves a compression ratio between $c_\text{full}$ and $c_\text{sparse}$ (by using a sparse representation of $B$), as explained before. From this matrix, we can recover all but the internal connectivity inside a symmetric motif, which is replaced by the average connectivity. Namely, let us define 
\begin{equation}
	\overline{a}_{ij} = \frac{1}{n_i}\frac{1}{n_j}b_{kl},
\end{equation}
where $n_i$, respectively $n_j$, is the size of the orbit containing $v_i$, respectively $v_j$ (note that these orbit sizes can be obtained as the row sums of the characteristic matrix $S$). Then one can show (Methods, Theorem \ref{thm:avgcomp}) that
\begin{equation}
\overline{a}_{ij} =
\begin{cases}
	  a_{ij} & \text{if $v_i$ and $v_j$ are external,}\\
	  	\frac{1}{n_i}\frac{1}{n_j} \sum_{\substack{v_k \in \Delta_1\\v_l \in \Delta_2}} a_{kl}& \text{if $v_i$ and $v_j$ are internal,}\\
\end{cases}
\end{equation}
where we call a pair of vertices \emph{external} if they belong to two different symmetric motifs, and \emph{internal} otherwise, and $v_i\in\Delta_1$ and $v_1\in\Delta_l$ are orbits. Hence, if we are not interested in the exact internal connectivity (inside a symmetric motif), or it can be recovered easily by other means (e.g.~one motif at a time), we can use this simple method to eliminate all the symmetry-induced redundancies on an arbitrary network measure encoded as a matrix $A$. We have included simple \emph{average symmetry compression} and \emph{decompression} algorithms (Algs.~\ref{alg:avgcompression}, \ref{alg:avgdecompression}), where $A_\text{avg}$ is the matrix with entries $\overline{a}_{ij}$. The original $n_{\mathcal G}\times n_{\mathcal G}$ matrix $A$ is stored using the $n_{\mathcal Q} \times n_{\mathcal Q}$ quotient matrix $B$ plus a very sparse ($n$ non-zero elements) characteristic matrix $S$. 

\begin{algorithm2e}
\SetAlgoLined
\DontPrintSemicolon
\SetKwFunction{Sum}{sum}\SetKwFunction{Diag}{diag}
\KwIn{adjacency matrix $A$, characteristic matrix $S$}
\KwOut{quotient matrix $B$}
\BlankLine
$B \leftarrow S^TAS$\;
\caption{Average symmetry compression.}
\label{alg:avgcompression}
\end{algorithm2e}

\begin{algorithm2e}
\SetAlgoLined
\DontPrintSemicolon
\SetKwFunction{Sum}{sum}\SetKwFunction{Diag}{diag}
\KwIn{quotient matrix $B$, characteristic matrix $S$}
\KwOut{adjacency matrix $A_\text{avg}$}
\BlankLine
$\Lambda \leftarrow \Diag(\Sum(S))$\;
$R \leftarrow S\Lambda^{-1}$\;
$A_\text{avg} \leftarrow RBR^T$\;
\caption{Average symmetry decompression.}
\label{alg:avgdecompression}
\end{algorithm2e}

The vast majority of edges in the network representation of a network measure are external (at least 99.999\% for a full measure in our test networks, see $int_f$ in Table \ref{Table1}), and hence the information loss by using $A_\text{avg}$ instead of $A$ is minimal. We can nevertheless enforce \emph{lossless compression}, by storing the intra-motif connectivity separately. Indeed, we can exploit the fact that most symmetric motifs in empirical networks are basic, and hence each orbit, or pair of orbits, is uniquely determined by two parameters (Fig.~\ref{fig:uniform}). If we disregard the symmetries generated at non-basic symmetric motifs, the corresponding quotient, called \emph{basic quotient}, written $\mathcal{Q}_\text{basic}$, leaves non-basic motifs unchanged and retains most of the symmetry in a typical real-world network. By annotating this quotient, we can recover the original network representation of the network measure exactly. We have implemented lossless compression and decompression algorithms (Methods, Algs.~\ref{alg:llcompression} and \ref{alg:lldecompression}), and evaluated them in our test networks (Fig.~\ref{fig:compression}).

\begin{figure}[h!]
\begin{tikzpicture}[scale=0.95]
\begin{axis}[
	ylabel={Computational reduction},
	xmajorgrids=true,
	xmin=0.5,
	xmax=7.5,
	xtick = {1,2,3,4,5,6,7},
	xticklabels={spectral\,, commun, laplacian, distance, closeness, btwness, eigc},
	xticklabel style={
	font=\small,
        rotate=40,
        anchor=east,},
	ymin=0, 
	ymax=105,
	ytick = {0, 10, 20, 30, 40, 50, 60, 70, 80, 90, 100},
	yticklabel=\pgfmathprintnumber{\tick}\,\%,
	legend entries = {HumanDisease, Yeast, OpenFlights, USPowerGrid, HumanPPI, Astro-Ph, InternetAS, WordNet, Amazon, Actors},
	legend columns = 5,
	legend style = {font=\tiny, at={(0.05,-0.32)}, anchor=south west,}, 
	legend cell align=left,
	scatter/classes={%
	HumanDisease={mark=o,red,scale=1.5},
	Yeast={mark=triangle,orange,scale=1.5},%
	OpenFlights={mark=square,magenta,scale=1.5},%
	USPowerGrid={mark=diamond,pink,scale=1.75},%
	HumanPPI={mark=pentagon,purple,scale=1.75},%
	Astro-Ph={mark=o,blue,scale=1.5},%
	InternetAS={mark=triangle,cyan,scale=1.5},
	WordNet={mark=square,olive,scale=1.5},%
	Amazon={mark=diamond,teal,scale=1.5},%
	Actors={mark=pentagon,brown,scale=1.5}}
]
\addplot+ [
	only marks,
	scatter,	
	scatter src = explicit symbolic,
] 
table [
	x=test,
	y=ratio,
	meta=network,
] {
test	ratio		network
0.85	32.32	HumanDisease
0.85	52.37	Yeast
0.85	48.09	OpenFlights
0.85	78.79	USPowerGrid
0.85 	77.26	HumanPPI
1.15	13.11	HumanDisease
1.15	42.02	Yeast
1.15	44.93	OpenFlights
1.15	75.21	USPowerGrid
1.15	73.18	HumanPPI
2	13.04	HumanDisease
2	44.10	Yeast
2	45.97	OpenFlights
2	76.47	USPowerGrid
2	73.24	HumanPPI
3	11.41	HumanDisease
3	43.80	Yeast
3	49.45	OpenFlights
3	75.15	USPowerGrid
3	72.43	HumanPPI
4	29.47	HumanDisease
4	60.26	Yeast
4	68.44	OpenFlights
4	84.30	USPowerGrid
4	81.70	HumanPPI
4	64.28	Astro-Ph	
4	20.09	InternetAS
5	34.85	HumanDisease
5	62.24	Yeast
5	68.43	OpenFlights
5	85.02	USPowerGrid
5	80.17	HumanPPI
5	67.95	Astro-Ph	
5	32.72	InternetAS
6	34.44	HumanDisease
6	64.26	Yeast
6	71.51	OpenFlights
6	85.25	USPowerGrid
6	83.80	HumanPPI
6	67.80	Astro-Ph	
6	29.41	InternetAS
7	57.83	HumanDisease
7	63.48	Yeast
7	57.48	OpenFlights
7	85.41	USPowerGrid
7	85.76	HumanPPI
7	76.04	Astro-Ph	
7	49.84	InternetAS
7	58.51	WordNet			
7	80.49	Amazon			
7	56.84	Actors
};
\end{axis}
\end{tikzpicture}
\caption{\textbf{Quotient computational reduction.} \small Computational time reduction of several structural measures in some of our test networks (Table \ref{Table1}) obtained by performing the calculation in the quotient network versus the original network. The computations are: spectral decomposition of the adjacency matrix $A$ ({spectral}), exponential matrix $\text{exp}(A)$ ({commun}), pseudoinverse of the Laplacian matrix ({laplacian}), shortest path distance ({distance}), closeness centrality ({closeness}), betweenness centrality ({btwness}) and eigenvector centrality ({eigc}), using MATLAB R2018a built-in functions. For {spectral}, we also show (left column) the reduction including the (sequential) symmetric motif calculation. In each case, median computational reduction over at least 10 iterations shown.
}
\label{fig:reduction}
\end{figure}

\subsection*{Computational reduction}
Network symmetries can also reduce the computational time of evaluating an arbitrary network measure $F$. By Eq.~\eqref{eq:StructuralFunction}, we only need to evaluate $F$ on orbits, resulting in a computational reduction ratio of between $\widetilde{m}_\mathcal{Q}$ and $\widetilde{n}_\mathcal{Q}^2$ (Table \ref{Table1}) for sparse, respectively full, network measures. Of course, this assumes that the computation on each pair of vertices $F(i,j)$ is independent of one another, which is often not the case. Moreover, the calculation of $F(i,j)$ is still performed on the whole network $\mathcal G$. 

A more substantial computational reduction can be obtained by evaluating $F$ on the (often much smaller) quotient network instead. We call $F$ \emph{quotient recoverable} if it can be applied to the quotient network $\mathcal Q$, and $F(\mathcal G)$ can be recovered from $F(\mathcal Q)$, for all networks $\mathcal G$. Note that this may involve, beyond evaluating $F(\mathcal Q)$, an independent (hence parallelizable) computation on each symmetric motif (typically a very small graph). By evaluating $F$ in the quotient network, we can obtain very substantial computational time savings, depending on the amount of symmetry present and the computational complexity of $F$. Depending on the network measure, it may not be possible to recover $F(\mathcal G)$ exactly from $F(\mathcal Q)$, but only partially.
We call a network measure $F$ \emph{partially quotient recoverable} if it can be applied to a quotient network $\mathcal Q$ of a network $\mathcal G$, and all the external edges of $F(\mathcal G)$ can be recovered from $F(\mathcal Q)$, for all networks $\mathcal G$. Since the quotient averages the network connectivity, we can often recover the average values of $F$ within symmetric motifs. We call $F$ \emph{average quotient recoverable} if, in addition to external edges, the average intra-motif edges can be recovered from $F(\mathcal Q)$. A typical situation is when $F(\mathcal Q)$ equals the quotient of $F$, that is, in symbols,
\begin{equation}\label{eqn:avgqr}
	F(\mathcal Q)=Q(F(\mathcal G)).
\end{equation}

In the Applications section, we will show that communicability is average quotient recoverable, and shortest path distance is partially, but not average, quotient recoverable. Not every measure can be (partially) recovered from the quotient, for example the number of distinct paths between two vertices, as the internal connectivity within each symmetric motif is lost, and replaced by its average connectivity, in the quotient. Note that the word `partially' can be misleading: typically almost all edges are external (see $ext_s$ and $int_f$ in Table \ref{Table1}). 
The resulting computational time reduction obtained by evaluating $F$ in the quotient can be very substantial, as illustrated by several popular network measures in our test networks (Fig.~\ref{fig:reduction}).

\subsection*{Spectral signatures of symmetry}
The spectrum of the network's adjacency matrix relates to a multitude of structural and dynamical properties\cite{newman2010networks}. The presence of symmetries is reflected in the spectrum of the network\cite{macarthur2009spectral}, and indeed in the spectrum of any network measure. Symmetries give rise to high-multiplicity eigenvalues (shown as `peaks' in the spectral density) and, in fact, we can explain and predict most of the discrete part of the spectrum of an arbitrary network measure on a typical real-world network.


Let $A$ be the $n \times n$ adjacency matrix of a (possibly weighted) network (such as the network representation of a network measure). First, note that symmetry naturally produces high-multiplicity eigenvalues, since
\begin{equation}
	AP\mathbf{v}=PA\mathbf{v}=\lambda P\mathbf{v}.
\end{equation}
where $(\lambda,\mathbf{v})$ is an eigenpair of $A$ and $P$ the permutation matrix of a network automorphism (Eq.~\eqref{eqn:autP}). This gives another eigenpair $(\lambda,\mathbf{v})$ whenever $\mathbf{v}$ and $P\mathbf{v}$ are linearly independent (obviously not always the case). 

Let $B=Q(A)$ be the $m \times m$ quotient of $A$ (Eq.~\eqref{eq:QuotientMatrixDef}) with respect to the partition of the vertex set into orbits. This partition satisfies a regularity condition called \emph{equitability}\cite{brouwer2011spectra}, which can be written in matrix form as $AS=SB$, where $S$ is the characteristic matrix of the partition. In particular, if $(\lambda,\mathbf{v})$ is a quotient eigenpair, then $(\lambda,S\mathbf{v})$ is a parent eigenpair,
\begin{equation}
	A(S\mathbf{v})=SB\mathbf{v}=\lambda(S\mathbf{v}).
\end{equation}
In fact, one can show (Methods, Theorem \ref{thm:spectraldecomposition}) that $A$ has an eigenbasis of the form 
\begin{equation}
	\{ S\mathbf{v}_1,\ldots,S\mathbf{v}_m,\mathbf{w}_1,\ldots,\mathbf{w}_{n-m}\},
\end{equation}
where $\{ \mathbf{v}_1,\ldots, \mathbf{v}_m\}$ is any eigenbasis of $B$, and $S^T\mathbf{w}_j=0$ for all $j$. We can think of a vector $\mathbf{v}\in \mathbb{R}^m$, respectively $\mathbf{w}\in \mathbb{R}^n$, as a vector on (the vertices of) the quotient, respectively the parent, network. Then, each vector $S\mathbf{v}_i$ equals the vector $\mathbf{v}_i$ \emph{lifted} to the parent network by repeating the value on each orbit. Similarly, $S^T\mathbf{w}_j=0$ means that the sum of the entries of $\mathbf{w}_j$ on each orbit is 0. 
All in all, we can always find an eigenbasis of $A$ consisting of \emph{non-redundant} eigenvectors $\{S\mathbf{v}_1,\ldots,S\mathbf{v}_m\}$ arising from a quotient eigenbasis by repeating values on each orbit, and \emph{redundant} eigenvectors $\{\mathbf{w}_1,\ldots,\mathbf{w}_{n-m}\}$ arising from the network symmetries, which add up to zero on each orbit (hence `dissappering' in the quotient). Similarly, we call their respective eigenvalues \emph{redundant} and \emph{non-redundant}.


Analogous to the way that symmetry is generated at symmetric motifs, the redundant eigenvectors and eigenvalues arise directly from certain eigenvectors and eigenvalues of the symmetric motifs, considered as networks on their own (Fig.~\ref{fig:red}). In fact, each symmetric motif $\mathcal M$ contributes the same (called \emph{redundant}) eigenpairs to \emph{any} network containing $\mathcal M$ as a symmetric motif: One can show (Methods, Theorem \ref{thm:redeig}) that if $\mathcal{M}$ is a symmetric motif of a network $\mathcal{G}$ and $(\lambda,\mathbf{w})$ is a redundant eigenpair of $\mathcal M$ (that is, the values of $\mathbf{w}$ add up to zero on each orbit of $\mathcal M$), then $(\lambda, \widetilde{\mathbf{w}})$ is an eigenpair of $\mathcal G$, where $\widetilde{\mathbf{w}}$ is equal to $\mathbf{w}$ on (the vertices of) $\mathcal M$, and zero elsewhere. We call such a vector $\widetilde{\mathbf{w}}$ \emph{localised} on the motif $\mathcal{M}$\cite{macarthur2009spectral}, as it is zero outside the motif. Moreover, if $\mathcal{M}$ has $n$ vertices and $k$ orbits, then it has an eigenbasis consisting of $n-k$ redundant eigenpairs, which are inherited by any network containing $\mathcal{M}$ as a symmetric motif (Fig.~\ref{fig:red}, Theorem \ref{thm:redeig} in Methods).

Furthermore, since most symmetric motifs in real-world networks are basic, thus have a very constrained structure (Fig.~\ref{fig:uniform}), we can in fact determine the redundant spectrum of BSMs with up to a few orbits, that is, we can predict where the most significant `peaks' in the spectral density of an arbitrary network function will occur. The formulae for the redundant spectra for BSMs of one or two orbits (which covers most BSMs, up to 99\% of them in our test networks) is given on Table \ref{table:redundant}. 
For example, for the graph Laplacian, symmetry explains between 89\% and 97\% of the discrete spectrum in our test networks, with most of the high-multiplicity eigenvalues (`peaks' in the eigenvalue histograms) occurring at the predicted values (resulting from our analysis of the spectrum of the most common symmetric motifs, namely BSMs up to 2 orbits, see below and Table \ref{table:redundant}), namely positive integers  (Fig.~\ref{fig:spectral}). 

\begin{figure}
\centering
\begin{subfigure}{0.2\textwidth}
\hspace*{-10pt}\raisebox{-0.6cm}{\begin{tikzpicture}
 	\node at (-0.56,1) {(a)};
	\Vertex[x=0,y=1,size=0.3,color=red,label=1,fontscale=1.1,style=thin]{1}
	\Vertex[x=0,y=0,size=0.3,color=red,label=2,fontscale=1.1,style=thin]{2}
	\Vertex[x=1,y=1,size=0.3,color=blue!70!white,label=3,fontscale=1.1,style=thin]{3}
	\Vertex[x=1,y=0,size=0.3,color=blue!70!white,label=4,fontscale=1.1,style=thin]{4}
	\Vertex[x=2,y=0.5,size=0.3,opacity=0,label=5,fontscale=1.1,style=thin]{5}
	\Edge[lw=0.8pt](1)(3)
	\Edge[lw=0.8pt](2)(4)
	\Edge[lw=0.8pt](3)(5)
	\Edge[lw=0.8pt](4)(5)
\end{tikzpicture}}
\end{subfigure}
\begin{subfigure}[t]{0.25\textwidth}
\hspace{-4pt}\scalebox{0.8}{\renewcommand*{\arraystretch}{1}
$\begin{array}{c|l} 
	\text{Eigenval.} & \text{Eigenvectors}\\ \hline
	1 & (1,-1 \mid 1, -1 \mid 0)\\
	-1 & (1,-1 \mid -1, 1 \mid 0)\\\hline
	\sqrt{3} & (1,1 \mid \sqrt{3}, \sqrt{3} \mid 2)\\
	-\sqrt{3} & (1,1 \mid -\sqrt{3}, -\sqrt{3} \mid 2)\\
	0 & (1,1 \mid 0, 0 \mid -1)\\
\end{array}$}
\end{subfigure}
\begin{subfigure}{0.18\textwidth}
 \hspace*{-10pt}\begin{tikzpicture}
 	 \node at (-0.55,1.2) {(b)};
	\Vertex[x=0,y=1,size=0.3,color=red,label=1,fontscale=1.1,style=thin]{1}
	\Vertex[x=0,y=0,size=0.3,color=red,label=2,fontscale=1.1,style=thin]{2}
	\Vertex[x=1,y=1,size=0.3,color=blue!70!white,label=3,fontscale=1.1,style=thin]{3}
	\Vertex[x=1,y=0,size=0.3,color=blue!70!white,label=4,fontscale=1.1,style=thin]{4}
	\Vertex[x=2,y=0.5,size=0.3,opacity=0,label=5,fontscale=1.1,style=thin]{5}
	\Edge[lw=0.8pt,loopposition=90,label=1.59,fontscale=0.8,position=below](1)(1)
	\Edge[lw=0.8pt,loopposition=-90,label=1.59,fontscale=0.8,position=above](2)(2)
	\Edge[lw=0.8pt,loopposition=90,label=2.29,fontscale=0.8,position=below](3)(3)
	\Edge[lw=0.8pt,loopposition=-90,label=2.29,fontscale=0.8,position=above](4)(4)
	\Edge[lw=0.8pt,loopposition=0,label=3.42,fontscale=0.8,position=left](5)(5)
	\Edge[lw=0.8pt,label=0.05,fontscale=0.8,position=left](1)(2)
	\Edge[lw=0.8pt,label=1.39,fontscale=0.8,position=above](1)(3)
	\Edge[lw=0.8pt](1)(4)
	\Edge[lw=0.8pt,label=0.70,fontscale=0.8,position={above=0pt},distance=0.8,bend=-5,NotInBG](1)(5)
	\Edge[lw=0.8pt](2)(3)
	\Edge[lw=0.8pt,label=1.39,fontscale=0.8,position=below](2)(4)
	\Edge[lw=0.8pt,label=0.70,fontscale=0.8,position={below=-0.5pt},distance=0.8,bend=5,NotInBG](2)(5)
	\Edge[lw=0.8pt,label=0.75,fontscale=0.8,position=right](3)(4)
	\Edge[lw=0.8pt,label=1.87,fontscale=0.8,position=above,bend=35](3)(5)
	\Edge[lw=0.8pt,label=1.87,fontscale=0.8,position=below,bend=-35](4)(5)
	\draw[very thin,->,draw=black!80!white] (-0.3,0.8)  to[out=-45,in=180] (0.25,0.7);
	\node [left] at (-0.2,0.9) [style={scale=0.55}] {$0.21$};
	\draw[very thin,->,draw=black!80!white] (-0.3,0.2)  to[out=45,in=180] (0.25,0.3);
	\node [left] at (-0.2,0.1) [style={scale=0.55}] {$0.21$};
\end{tikzpicture}
\end{subfigure}
\hspace{2pt}
\begin{subfigure}{0.26\textwidth}
\hspace*{1.5pt}\scalebox{0.8}{\renewcommand*{\arraystretch}{1}
$\begin{array}{c|l} 
	\text{Eigenval.} & \text{Eigenvectors}\\ \hline
	e & (1,-1 \mid 1, -1 \mid 0)\\
	1/e & (1,-1 \mid -1, 1 \mid 0)\\\hline
	0.26 & (1,1 \mid -1.38, -1.38 \mid 1.20)\\
	1.28 & (1,1 \mid 0.25, 0.25 \mid -1.09)\\
	6.57 & (1,1 \mid 1.89, 1.89 \mid 2.71)\\
\end{array}$}
\end{subfigure}
\begin{subfigure}{0.2\textwidth}
\hspace*{-22pt}\begin{tikzpicture}
	\node at (-0.54,1) {(c)};
	\Vertex[x=0,y=1,size=0.3,color=red,label=1,fontscale=1.1,style=thin]{1}
	\Vertex[x=0,y=0,size=0.3,color=red,label=2,fontscale=1.1,style=thin]{2}
	\Vertex[x=1,y=1,size=0.3,color=blue!70!white,label=3,fontscale=1.1,style=thin]{3}
	\Vertex[x=1,y=0,size=0.3,color=blue!70!white,label=4,fontscale=1.1,style=thin]{4}
	\Vertex[x=2,y=0.5,size=0.3,opacity=0,label=5,fontscale=1.1,style=thin]{5}
	\Edge[lw=0.8pt,loopposition=90,label=1,Math,fontscale=1,position=below](1)(1)
	\Edge[lw=0.8pt,loopposition=-90,label=1,Math,fontscale=1,position=above](2)(2)
	\Edge[lw=0.8pt,loopposition=90,label=2,Math,fontscale=1,position=below](3)(3)
	\Edge[lw=0.8pt,loopposition=-90,label=2,Math,fontscale=1,position=above](4)(4)
	\Edge[lw=0.8pt,loopposition=0,label=2,Math,fontscale=1,position=left](5)(5)
	\Edge[lw=0.8pt,label=-1,Math,fontscale=1,position=above](1)(3)
	\Edge[lw=0.8pt,label=-1,Math,fontscale=1,position=below](2)(4)
	\Edge[lw=0.8pt,label=-1,Math,fontscale=1,position=above](3)(5)
	\Edge[lw=0.8pt,label=-1,Math,fontscale=1,position=below](4)(5)
\end{tikzpicture}
\end{subfigure}
\begin{subfigure}{0.2\textwidth}
\hspace*{-16pt}\scalebox{0.8}{\renewcommand*{\arraystretch}{0.9}
$\begin{array}{c|l} 
	\text{Eigenval.} & \text{Eigenvectors}\\ \hline
	\varphi+1 & (1,-1 \mid 1, -1 \mid 0)\\
	-\varphi+2 & (1,-1 \mid -1, 1 \mid 0)\\ \hline
	0 & (1,1 \mid 1,1 \mid 1)\\
	\varphi+2 & (1,1 \mid -\varphi-1, -\varphi-1 \mid \eta_1)\\
	-\varphi+3 & (1,1 \mid \varphi-2, \varphi-2 \mid \eta_2)\\
\end{array}$}
\end{subfigure}
\caption{\textbf{Redundant spectrum.} 
\small 
Eigenvalues and eigenvectors near the symmetric motif $M_1$ (Fig.~\ref{fig:toy} (inset)) for three network measures: (a) adjacency $A$, (b) communicability $e^A$, and (c) Laplacian $L$, represented as weighted networks with weights shown as edge labels (as in Fig.~\ref{fig:new}). Eigenvector coordinates are separated by orbit, for convenience. In all three cases, there are two redundant, and three non-redundant, eigenpairs (separated here by a horizontal line), whose eigenvectors add up to zero, respectively are constant, on each orbit. Redundant eigenvectors are localised (zero outside the motif) and `survive' in any network where this graph is a symmetric motif. The redundant eigenvalues and eigenvectors agree with those predicted by the formulae in Table \ref{table:redundant}. Here, $\varphi=\frac{1+\sqrt{5}}{2}$, the golden ratio,
$\eta_1=2+\frac{2}{\varphi}$ and $\eta_2=2+\frac{2}{1-\varphi}$. 
}
\label{fig:red}
\end{figure}

\begin{figure}
\begin{tikzpicture}
\matrix {
\begin{axis}[
	width=0.19\textwidth,
	title=HumanDisease,
	no marks,
	xmin=-0.5,
	xmax=10,	
	ymax=0.25,
	minor x tick num=1,
	]
\addplot table [
	x index = 0, 
	y index = 1,
	] {HumanDisease.hist};
\addplot table [
	x index = 0, 
	y index = 2,
	] {HumanDisease.hist};
\node[fill=white,draw] at (6,0.17) {90.6\%};
\end{axis}
	&
\begin{axis}[
	width=0.19\textwidth,
	title=Yeast,
	no marks,
	xmin=-0.5,
	xmax=10,	
	ymax=0.25,
	minor x tick num=1,
	]
\addplot table [
	x index = 0, 
	y index = 1,
	] {Yeast.hist};
\addplot table [
	x index = 0, 
	y index = 2,
	] {Yeast.hist};
\node[fill=white,draw] at (6,0.17) {94.2\%};
\end{axis}
	&
\begin{axis}[
	title=OpenFlights,
	width=0.19\textwidth,
	no marks,
	xmin=-0.5,
	xmax=10,
	ymax=0.25,	
	minor x tick num=1,
	]
\addplot table [
	x index = 0, 
	y index = 1,
	] {OpenFlights.hist};
\addplot table [
	x index = 0, 
	y index = 2,
	] {OpenFlights.hist};
\node[fill=white,draw] at (6,0.17) {89.2\%};
\end{axis}
	\\[1em]
\begin{axis}[
	title=USPowerGrid,
	width=0.19\textwidth,
	no marks,
	xmin=-0.5,
	xmax=10,	
	ymax=0.25,
	minor x tick num=1,
	]
\addplot table [
	x index = 0, 
	y index = 1,
	] {USPowerGrid.hist};
\addplot table [
	x index = 0, 
	y index = 2,
	] {USPowerGrid.hist};
\node[fill=white,draw] at (6,0.17) {89.9\%};
\end{axis}
	& 
\begin{axis}[
	title=HumanPPI,
	width=0.19\textwidth,
	no marks,
	xmin=-0.5,
	xmax=10,	
	ymax=0.25,
	minor x tick num=1,
	]
\addplot table [
	x index = 0, 
	y index = 1,
	] {HumanPPI.hist};
\addplot table [
	x index = 0, 
	y index = 2,
	] {HumanPPI.hist};
\node[fill=white,draw] at (6,0.17) {95.0\%};
\end{axis}
	&
\begin{axis}[
	title=Astro-Ph,
	width=0.19\textwidth,
	no marks,
	xmin=-0.5,
	xmax=20,	
	ymax=0.25,
	minor x tick num=1,
	]
\addplot table [
	x index = 0, 
	y index = 1,
	] {Astro-Ph.hist};
\addplot table [
	x index = 0, 
	y index = 2,
	] {Astro-Ph.hist};
\node[fill=white,draw] at (12,0.17) {96.7\%};
\end{axis}

	\\
};
\end{tikzpicture}
\caption{\textbf{Spectral signatures of network symmetry.} \small Laplacian spectrum of six test networks (blue) and of their quotient (red), given as relative probability of eigenvalue count, with multiplicity, in bins of size $0.1$. Only the most significant part of the spectrum is shown. Most of the `peaks' observed in the spectral density occur at positive integers, as predicted. (Insets) Percentage of the high-multiplicity spectrum explained by the symmetry, as the ratio of $\sum_{m_\lambda > 1} m_\lambda$ for the quotient eigenvalues, and for the Laplacian eigenvalues, where $m_\lambda$ is the multiplicity of an eigenvalue $\lambda$ rounded to 8 decimal places.
}
\label{fig:spectral}
\end{figure}

We now give more details of the computation of the redundant spectrum of BSMs up to two orbits (Table \ref{table:redundant}), with full details in the Methods section. 
A BSM with one orbit is an $(\alpha,\beta)$-uniform graph $K_n^{\alpha,\beta}$ with adjacency matrix $A_n^{\alpha,\beta}=(a_{ij})$ given by $a_{ij}=\alpha$ and $a_{ii}=\beta$ for all $i\neq j$. Then $K_n^{\alpha,\beta}$ has eigenvalues $(n-1)\alpha+\beta$ (non-redundant), with multiplicity 1, and $-\alpha + \beta$ (redundant), with multiplicity $n-1$. The corresponding eigenvectors are $\mathbf{1}$, the constant vector 1 (non-redundant), and $\mathbf{e}_i$, the vectors with non-zero entries 1 at position 1, and $-1$ at position $i$, $2\le i \le n$ (redundant).  For unweighted graphs without loops ($\beta=0$, $\alpha \in \{0,1\}$), we recover the redundant eigenvalues $0$ and $-1$ predicted in \cite{macarthur2009spectral}. 

A BSM with two orbits must be a uniform join of the form 
$K_n^{\alpha_1,\beta_1} \stackrel{\gamma,\delta}{\longleftrightarrow} K_n^{\alpha_2,\beta_2}$ (Fig.~\ref{fig:uniform}). Let $\kappa_1$ and $\kappa_2$ be the two solutions of the quadratic equation $c \kappa^2 + (b-a)\kappa - c = 0$, where $a=\alpha_1-\beta_1$, $b=\alpha_2-\beta_2$ and $c=\gamma-\delta$. Then, the redundant eigenvalues of this BSM are (Methods, Theorem \ref{thm:RSpec})
\begin{align}
	\lambda_1 &= -b-c\kappa_1 = \frac{-(a+b) + \sqrt{(a-b)^2+4c^2}}{2},  \text{ and}\\
	\lambda_2 &= -b-c\kappa_2 = \frac{-(a+b) - \sqrt{(a-b)^2+4c^2}}{2},
\end{align}
each with multiplicity $n-1$, with eigenvectors $(\kappa_1 \mathbf{e}_i | \mathbf{e}_i)$ and $(\kappa_2 \mathbf{e}_i | \mathbf{e}_i)$ respectively, $2\le i\le n$. For unweighted graphs without loops, we recover the redundant eigenvalues predicted in \cite{macarthur2009spectral}, that is, 
\begin{equation}\label{eqn:golden}
	-2,\ -\varphi,\ -1,\ 0,\ \varphi-1 \text{ and } 1,
\end{equation}
where $\varphi=\frac{1+\sqrt{5}}{2}$, the golden ratio.

\begin{table}
\begin{tabular}{cccc}
BSM & eigenvalues & mult & eigenvectors \\
\hline
$K_n^{\alpha,\beta}$ &  $-\alpha + \beta$ & $n-1$ & $\mathbf{e}_i$\\
\hline
\multirow{2}{*}{$K_n^{\alpha_1,\beta_2} \stackrel{\gamma, \delta}{\longleftrightarrow} K_n^{\alpha_2,\beta_2}$} & $-b - \kappa_1c$ & $n-1$ & $(\kappa_1 \mathbf{e}_i \,|\, \mathbf{e}_i)$\\
& $-b - \kappa_2c$ & $n-1$ & $(\kappa_2 \mathbf{e}_i \,|\, \mathbf{e}_i)$\\
\hline
\end{tabular}
\caption[lof]{\textbf{Redundant spectra of basic symmetric motifs (BSMs) with one or two orbits.} \small 
A BSM with one orbit is a uniform graph $K_n^{\alpha,\beta}$ with $n$ vertices and adjacency matrix $A_n^{\alpha,\beta}=(a_{ij})$ where $a_{ij}=\alpha$ if $i \neq j$ and $a_{ii}=\beta$, for all $i, j$ and some constants $\alpha$ and $\beta$. A BSM with two orbits consists of the $(\gamma,\delta)$-uniform join of two uniform graphs $K_n^{\alpha_1,\beta_1}$ and $K_n^{\alpha_2,\beta_2}$, that is, the graph with $2n$ vertices and block adjacency matrix (after a suitable labelling of the vertices) of the form $\left(\begin{smallmatrix} A & C \\  C & B \end{smallmatrix}\right)$ where $A=A_n^{\alpha_1,\beta_1}$, $B=A_n^{\alpha_2,\beta_2}$ and $C=A_n^{\gamma,\delta}$, each defined as above. 
We write $\mathbf{e}_i$ for the vector with non-zero entries 1 at position 1, and $-1$ at position $i$ ($2\le i \le n$), $\kappa_1$ and $\kappa_2$ for the two solutions of the quadratic equation $c\kappa^2+(-a+b)\kappa-c=0$ where $a=\alpha_1-\beta_1$, $b=\alpha_2-\beta_2$ and $c=\gamma-\delta$, and use $(v|w)$ to represent the concatenation of two vectors. 
}
\label{table:redundant}
\end{table}

\subsection*{Eigendecomposition algorithm}
Decoupling the contribution to the network spectrum from the symmetric motifs and from the quotient network, as explained above, naturally leads to an eigendecomposition algorithm that exploits the presence of symmetries: The spectrum and eigenbasis of an undirected network (equivalently, a diagonalisation of its adjacency matrix $A = UDU^T$) can be obtained from those of the quotient, and of the symmetric motifs, reducing the computational time (cubic on the number of vertices) to up to a third in our test networks (Fig.~\ref{fig:reduction}, left column of the spectral case), in line with our predictions ($sp=n^3_\mathcal{Q}$ in Table \ref{Table1}). The algorithm is shown and explained below. A MATLAB implementation is available at a public repository\cite{BitBucket}.

Our eigendecomposition algorithm (Alg.~\ref{alg:spectraldecomp}) applies to any undirected matrix with symmetries (identifying a matrix with the network it represents). It first computes the eigendecomposition of the quotient matrix, then, for each motif, the redundant eigenpairs. Namely, it first computes the spectral decomposition \texttt{eig} of the symmetric quotient $B_\text{sym}=\Lambda^{-1/2}S^TAS\Lambda^{-1/2}$ where $\Lambda$ is the diagonal matrix of the orbit sizes (which can be obtained as the column sums of $S$). This matrix is symmetric and has the same eigenvalues as the left quotient. Moreover, if $B_\text{sym} = U_q D_q U_q^{-1}$ then the left quotient eigenvectors are the columns of $\Lambda U_q$. These become, in turn, eigenvectors of $A$ by repeating their values on each orbit, and can be obtained mathematically by left multiplying by the characteristic matrix $S$. Then, for each motif, we compute the redundant eigenpairs using a null space matrix (explained below), storing eigenvalues and localised (zero outside the motif) eigenvectors. 

Only redundant eigenvectors of a symmetric motif (that is, those which add up to zero on each orbit) become eigenvectors of $A$ by extending them as zero outside the symmetric motif. Therefore, we need to construct redundant eigenvectors from the ouput of \texttt{eig} on each motif (the spectral decomposition of the corresponding submatrix). If $U_\lambda = \left(\begin{array}{@{}c|c|c@{}} \mathbf{v}_1 & \ldots & \mathbf{v}_k \end{array}\right)$ are $\lambda$-eigenvectors of a symmetric motif with characteristic matrix of the orbit partition $S_\text{sm}$, we need to find linear combinations such that 
\begin{align}
	S_\text{sm}^T \left(\alpha_1 \mathbf{v}_1 +\ldots+ \alpha_k \mathbf{v}_k \right) = \mathbf{0} \iff S_\text{sm}^T U_\lambda \begin{pmatrix} \alpha_1 \\ \vdots \\ \alpha_k. \end{pmatrix}.
\end{align}
Therefore, if the matrix $Z \neq 0$ represents the null space of $S_\text{sm}^T U_\lambda$, that is, $S_\text{sm}^T U_\lambda Z = 0$ and $Z^TZ=0$, then the columns of $U_\lambda Z$ are precisely the redundant eigenvectors. This is implemented in Alg.~\ref{alg:spectraldecomp} within the innermost \textbf{for} loop. 

\begin{algorithm2e}
\SetAlgoLined
\DontPrintSemicolon

\SetKwData{Motif}{motif}\SetKwData{Orbits}{orbits}
\SetKwFunction{Sum}{sum}\SetKwFunction{Diag}{diag}\SetKwFunction{Eig}{eig}\SetKwFunction{Unique}{unique}\SetKwFunction{Null}{null}\SetKwFunction{Ncol}{ncol}


\KwIn{adjacency matrix $A$, characteristic matrix $S$, list of motifs}
\KwOut{spectral decomposition $A = UDU^T$}
\BlankLine
initialise $U$, $D$ to zero matrices\;
$\Lambda \leftarrow \Diag(\Sum(S))$\;
$B_\text{sym} \leftarrow \Lambda^{-1/2}S^TAS\Lambda^{-1/2}$\;
$[U_q,D_q] \leftarrow \Eig(B_\text{sym})$ so that $B_\text{sym} = U_q D_q U_q^{-1}$\;
$U_q \leftarrow \Lambda U_q$\;
$U \leftarrow \left(\begin{array}{@{}c|c@{}} S U_q  & 0  \end{array}\right)$\;
$D \leftarrow \left(\begin{array}{@{}c|c@{}} D_q  & 0\\ \hline 0 & 0  \end{array}\right)$\;

\ForEach{\Motif}{
	$A_\text{sm} \leftarrow A(\Motif,\Motif)$\;
	compute \Orbits from \Motif and $S$\;
	$S_\text{sm} \leftarrow S(\Motif,\Orbits)$\;
	$[U_\text{sm},D_\text{sm}] \leftarrow \Eig(A_\text{sm})$\;
		\For{$\lambda \in \Unique(\Diag(D_\text{sm}))$}{
			$U_\lambda \leftarrow$ $\lambda$-eigenvectors 			from $U_\text{sm}$\;
			$Z \leftarrow \Null(S_\text{sm}^TU_\lambda)$\;
			$d \leftarrow \Ncol(Z)$\;
			\If{$d > 0$}{
				store $U_\lambda Z$ in $U$\;
				store $\lambda$ in $D$ with multiplicity $d$\;
			}
		}
}
\caption{Eigendecomposition algorithm.}
\label{alg:spectraldecomp}
\end{algorithm2e}

\subsection*{Vertex measures} \label{sec:Centrality} 
We have so far considered network measures of the form $F(i,j)$, where $i$ and $j$ are vertices. However, many important network measurements are vertex based, that is, of the form $G(i)$ for each vertex $i$. We say that a vertex measure $G$ is \emph{structural} if it only depends on the network structure and, therefore, satisfies
\begin{equation}\label{eq:vertexfun}
	G(i)=G(\sigma(i))
\end{equation}
for each automorphism $\sigma \in \text{Aut}(\mathcal G)$, that is, it is constant on orbits (Fig.~\ref{fig:toy}).

Although for vertex measures we do not have a network representation, we can still exploit the network symmetries. First, $G$ needs only to be computed/stored once per orbit, resulting on a reduction/compression ratio of $\widetilde{n}_\mathcal{Q}=n_\mathcal{Q}/n_\mathcal{G}$ (Table \ref{Table1}). 
Secondly, when quotient recovery holds (that is, we can recover $G$ from its values on the quotient and symmetry information alone), it amounts to a further computational reduction (Fig.~\ref{fig:reduction}), depending on the computational complexity of $G$. Finally, many vertex measures arise nevertheless from a pairwise function, such as $G(i)=F(i,i)$ (subgraph centrality from communicability), or $G(i)=\frac{1}{n}\sum_{j} F(i,j)$ (closeness centrality from shortest path distance), allowing the symmetry-induced results on $F$ to carry over to $G$. 

\subsection*{Applications}
We illustrate our methods on several popular pairwise and vertex-based network measures. Although novel and of independent interest, these are example applications: Our methods are general and the reader should be able to adapt our results to the network measure of their interest. 

\smallskip
\noindent\textbf{Adjacency matrix.} 
The methods in this paper can be applied to the network itself, that is, to its adjacency matrix. We recover the structural and spectral results in \cite{macarthur2008symmetry,macarthur2009spectral}, and the quotient compression ratio reported in \cite{xiao2008network}, here $c_\text{sparse}=\widetilde{m}_\mathcal{Q}$ in Table \ref{Table1}. The network (adjacency) eigendecomposition can be significantly sped up by exploiting symmetries (Fig.~\ref{fig:reduction}). 

\smallskip
\noindent\textbf{Communicability.}
Communicability is a very general choice of structural measure, consisting on any analytical function $f(x)= \sum a_n x^n$ applied to the adjacency matrix, 
$
	f(A) = \sum_{n=0}^\infty a_n A^n,
$
and it is a natural measure of network connectivity, since the matrix power $A^k$ counts walks of length $k$ \cite{estrada2008communicability}. The most common choice of coefficients is $a_n=\frac1{n!}$, which gives the exponential matrix $e^A=\sum_{n=0}^\infty \frac{A^n}{n!}$.
Communicability is a structural network measure and its network representation, the graph $f(\mathcal G)$ with adjacency matrix $f(A)$, inherits all the symmetries of $\mathcal G$ and thus it has the same symmetric motifs and orbits. The BSMs are uniform joins of orbits, and each orbit is a uniform graph (Figs.~\ref{fig:uniform} and \ref{fig:new}b) characterised by the communicability of a vertex to itself (a natural measure of centrality \cite{estrada2005subgraph}), and the communicability between distinct vertices.
As a full network measure, the compression ratio $c_\text{full}$ applies (Table \ref{Table1}), indicating the fraction of storage needed by using the quotient to eliminate redundancies (Fig.~\ref{fig:compression}). Moreover, average quotient recovery holds for communicability since $f(Q(A))=Q(f(A))$ (Methods, Theorem \ref{thm:comm}). Alternatively, we can use the spectral decomposition algorithm on the adjacency matrix ($A = U D U^T$ implies $f(A) = U f(D) U^T$) reducing the computation, typically cubic on the number of vertices, by $sp = \widetilde{n}_\mathcal{Q}^3 $ (Table \ref{Table1}, Fig.~\ref{fig:reduction}). For the spectral results, note that $f(A)=U f(D) U^T$ has eigenvalues $f(\lambda)$, and same eigenvectors, as $A$. Thus
\begin{align}
	f(-2), f(-\varphi), f(-1), f(0), f(\varphi-1), \text{ and } f(1)
\end{align}
account for most of the discrete part of the spectrum $f(A)$, for the adjacency matrix $A$ of a typical (undirected, unweighted) real-world network (Eq.~\eqref{eqn:golden}).

\smallskip
\noindent\textbf{Shortest path distance.}
This is the simplest metric on a (connected) network, namely the length of a shortest path between vertices. 
A \emph{path} of \emph{length} $n$ is a sequence $(v_1,v_2,\ldots,v_{n+1})$ of distinct vertices, except possibly $v_1=v_{n+1}$, such that $v_i$ is connected to $v_{i+1}$ for all $1\le i \le n-1$. The \emph{shortest path distance}  $d^\mathcal{G}(u,v)$  is the length of the shortest (minimal length) path from $u$ to $v$. If $p=(v_1,v_2,\ldots,v_n)$ is a path and $\sigma \in \text{Aut}(\mathcal G)$, we define $\sigma(p) = (\sigma(v_1),\sigma(v_2),\ldots,\sigma(v_n))$, also a path since $\sigma$ is a bijection. 
A path $p$ is a \emph{shortest path} if it is of minimal length between its endpoints. 
One can show that (i) automorphisms preserve shortest paths and their lengths; (ii) shortest paths between vertices in different symmetric motifs do not contain intra-orbit edges; and (iii) shortest path distance is a partially quotient recoverable structural measure (Methods, Theorem \ref{thm:SPD}). In particular, automorphisms $\sigma$ preserve the shortest path metric, $d(i,j)=d\left(\sigma(i),\sigma(j)\right)$, and we can compute shortest distances from the quotient, 
\begin{equation}\label{eq:shortestpathrecovery}
	d^{\mathcal G}(\alpha,\beta) = d^{\mathcal Q}(i,j), \quad \alpha \in V_i, \beta \in V_j,  
\end{equation}
whenever $V_i$ and $V_j$ are orbits in different symmetric motifs. This accounts for all but the (small) intra-motif distances and reduces the computation as shown in Fig.~\ref{fig:reduction}. 

Distances between points within the same motif cannot in general be directly recovered from the quotient, not even for BSMs. (Consider for instance the double star, motif $M_1$, in Fig.~\ref{fig:toy}: The distance from the top red to the bottom blue vertex is three, while in the quotient is one.) In general, therefore, the shortest path distance is partially, but not average, quotient recoverable. Intra-motif distances, if needed, could still be recovered one motif at a time. 

Note that these results can be exploited for other graph-theoretic notions defined in terms of distance, for example eccentricity (and thus radius or diameter), which only depends on maximal distances and thus it can be computed directly in the quotient.

In terms of symmetry compression, the compression ratio $c_\text{full}$ applies, accounting for the amount of structural redundancy due solely to symmetries. The spectral results, although perhaps less relevant, still apply for $d(\mathcal G)$, the graph encoding pairwise shortest path distances. The adjacency matrix $d(A)=(d^\mathcal{G}(i,j))$ is nonzero outside the diagonal, hence $d(\mathcal G)$ is a all-to-all weighted network without self-loops and integer weights, and so is each symmetric motif. Using the formula in Table \ref{table:redundant}, we can easily compute values of the most significant part of the discrete spectrum (redundant eigenvalues) of $d(A)$,
namely $-3$, $-2$, $-1$, $0$, $-2\pm\sqrt{2}$, $-3\pm\sqrt{2}$, $\frac{-3\pm\sqrt{5}}{2}$, $\frac{-5\pm\sqrt{5}}{2}$ and $\frac{-5\pm\sqrt{13}}{2}$.


\smallskip
\noindent\textbf{Laplacian matrix.}
The Laplacian matrix of a network $L=D-A$, where $D$ is the diagonal matrix of vertex degrees, is a (sparse) network measure and therefore inherits all the symmetries of the network. The matrix $L$ can be seen as the adjacency matrix of a network $\mathcal L$ with identical symmetric motifs, except that all edges are weighted by $-1$ and all vertices have self-loops weighted by their degrees in $\mathcal G$ (Fig.~\ref{fig:new}c). In particular, the motif structure (namely, the self-loop weights) depends on the how the motif is embedded in the network $\mathcal G$. 

Quotient compression and computational reduction are less useful in this case, however the spectral results are more interesting. The spectral decomposition applies, and we can compute redundant Laplacian eigenvalues directly from Table \ref{table:redundant}, for instance positive integers for BSMs with one orbit (Methods, Corollary \ref{cor:laplacian}). This explains and predicts most of the `peaks' (high multiplicity eigenvalues) in the Laplacian spectral density, confirmed on our test networks (Fig.~\ref{fig:spectral}).  
Using the formula in Table \ref{table:redundant}, one can similarly compute the redundant spectrum for 2-orbit BSMs, and for other versions of the Laplacian (e.g.~normalised, vertex weighted). Finally, observe that the spectral decomposition applies, thus Algorithm \ref{alg:spectraldecomp} provides an efficient way of computing the Laplacian eigendecomposition with an expected $sp=\tilde{n}_\mathcal{Q}^3$ (see Table \ref{Table1}) computational time reduction.

\smallskip
\noindent\textbf{Commute distance and matrix inversion.}
The commute distance is the expected time for a random walker to travel between two vertices and back \cite{lovasz1993random}. In contrast to the shortest path distance, it is a global metric which takes into account all possible paths between two vertices. The commute distance is equal up to a constant (the volume of the network) to the resistance metric $r$ \cite{klein1993resistance}, which can be expressed in terms of $L^\dagger = (l^\dagger_{ij})$, the pseudoinverse (or Moore-Penrose inverse) of the Laplacian, as
$r(i,j) = l^\dagger_{ii} + l^\dagger_{jj} - 2l^\dagger_{ij}$.
The commute (or resistance) distance is a (full) structural measure, and all our structural and spectral results apply. Crucially, we can use eigendecomposition algorithm to obtain $L=UDU^T$ (and hence $L^\dagger=UD^\dagger U^T$, and $r$) from the quotient and symmetric motifs, resulting in significant computational gains (Fig.~\ref{fig:reduction}). More generally, if $M_F$ is the matrix representation of a network measure, its pseudoinverse $M_F^\dagger$ is also a network measure, and the comments above apply. Note that $M_F^\dagger$ is generally a full measure even if $M_F$ is sparse (the inverse of a sparse matrix is not generally sparse).

\smallskip
\noindent\textbf{Vertex symmetry compression.}
As a vertex measure $G$ is constant on orbits, we only need to store one value per orbit. Let $S$ be the characteristic matrix of the partition of the vertex set into orbits, and $\Lambda$ the diagonal matrix of orbit sizes (column sums of $S$). If $G$ is represented by a vector $v=(G(i))$ of length $n_\mathcal{G}$, we can store one value per orbit by taking $\mathbf{w}=\Lambda^{-1}S^T\mathbf{v}$, a vector of length $n_\mathcal{Q}$, and recover $\mathbf{v}=S^T\mathbf{w}$ (Methods, Theorem \ref{thm:vertexcompression}).

\smallskip
\noindent\textbf{Degree Centrality.}
The degree of a node (in- or out-degree if the network is directed) is a natural measure of vertex centrality. As expected, the degree is preserved by any automorphism $\sigma$, which can also be checked directly, 
\begin{equation}\label{eq:degree}
	d_i = \sum_{j \in V} a_{ij} = \sum_{j \in V} a_{\sigma(i)\sigma(j)} = \sum_{j \in V} a_{\sigma(i)j} = d_{\sigma(i)},
\end{equation}
as automorphisms permute orbits (so $j \in V$ and $\sigma(j) \in V$ are the same elements but in a different order). In particular, the degree is constants on orbits. We recover the degree centrality from the quotient as the out-degree (Methods, Proposition \ref{prop:degreequotient}). 


\smallskip
\noindent\textbf{Closeness centrality.} 
The closeness centrality of a node $i$ in a graph $\mathcal G$, $cc^\mathcal{G}(i)$, is the average shortest path length to every node in the graph. As symmetries preserve distance, they also preserve closeness centrality, explicitly,
\begin{align}
	cc(i) &= \frac{1}{n_\mathcal{G}} \sum_{j \in V} d(i,j) = \frac{1}{n_\mathcal{G}} \sum_{j \in V} d(\sigma(i),\sigma(j))\nonumber\\ &= \frac{1}{n_\mathcal{G}}  \sum_{j \in V} d(\sigma(i),j) = cc(\sigma(i))\,,
\end{align}
and centrality is constant on each orbit, as expected. Moreover, closeness centrality can be  recovered from the quotient (shortest paths does not contain intra-orbit edges, except between vertices in the same symmetric motif, see above), as
\begin{align}
	cc^{\mathcal G}(i)= \sum_{l\neq k}\frac{n_l}{n_\mathcal{G}} d^{\mathcal Q}(k,l) + \frac{n_i}{n_\mathcal{G}} d_k
\end{align}
if $i$ belongs to the orbit $V_k$ and $d_k$ is the average intra-motif distance, that is, the average distances of a vertex in $V_k$ to any vertex in $\mathcal M$, the motif containing $V_k$. By annotating each orbit by $d_k$, we can recover betweenness centrality exactly. Alternatively, as $d_k \ll n$ (note that $d_k \le m$ if $\mathcal M$ has $m$ orbits), we can approximate $cc^{\mathcal G}(i)$ by the first summand, or simply by the quotient centrality $cc^\mathcal{Q}(\alpha)$, in most practical situations. 

\smallskip
\noindent\textbf{Betweenness centrality.}
This is the sum of proportions of shortest paths between pairs of vertices containing a given vertex. 
It can be computed from shortest path distances and number of shortest paths\cite{brandes2001faster}, both pairwise structural measures, reducing the computation of a naive $O(n^3)$ time, $O(n^2)$ space implementation by $\widetilde{n}^3_\mathcal{Q}$ and $\widetilde{n}^2_\mathcal{Q}$. It would be interesting to adapt a faster algorithm e.g.\cite{brandes2001faster} to exploit symmetries, but this is beyond our scope.

\smallskip
\noindent\textbf{Eigenvector centrality.}
Eigenvector centrality is obtained from a Perron-Frobenius eigenvector (i.e.~of the largest eigenvalue) of the adjacency matrix of a connected graph \cite{newman2010networks}. 
Since this eigenvalue must be simple, it cannot be a redundant eigenvalue. Hence it is a quotient eigenvalue,  and, as those are a subset of the parent eigenvalues, it must still be the largest (hence the Perron-Frobenius) eigenvalue of the quotient. Its eigenvector can then be lifted to the parent network, by repeating entries on orbits. That is, if $(\lambda, v)$ is the Perron-Frobenius eigenpair of the quotient, then $(\lambda,Sv)$ is the Perron-Frobenius eigenpair of the parent network. In practice, we use the symmetric quotient $B_\text{sym} = \Lambda^{-1/2}S^TAS\Lambda^{-1/2}$ for numerical reasons (Algorithm \ref{alg:EC}). Hence the computation (quadratic time by power iteration) can be reduced by $\widetilde{n}^2_\mathcal{Q}$ (Fig.~\ref{fig:reduction}).

\begin{algorithm2e}
\SetAlgoLined
\DontPrintSemicolon
\SetKwFunction{Sum}{sum}\SetKwFunction{Diag}{diag}\SetKwFunction{Eig}{eig}

\KwIn{adjacency matrix $A$, characteristic matrix $S$}
\KwOut{(right) Perron-Frobenius eigenpair $(\lambda,v)$ of $A$}
\BlankLine
$\Lambda \leftarrow \Diag(\Sum(S))$\;
$R \leftarrow S \Lambda^{-1/2}$\;
$B_\text{sym} \leftarrow R^TAR$\;
$(\lambda,w) \leftarrow \Eig(B_\text{sym},1)$ eigenpair of the largest eigenvalue \;
$v \leftarrow R w$\;

\caption{Eigenvector centrality from the quotient network.}
\label{alg:EC}
\end{algorithm2e}

\section*{Discussion}
We have presented a general theory to describe and quantify the effects of network symmetry on arbitrary network measures, and explained how this can be exploited in practice in a number of ways. 

Network symmetry of the large but sparse graphs typically found in applications can be effectively computed and manipulated, making it an inexpensive pre-processing step. 
We showed that the amount of symmetry is amplified in a pairwise network measure but can be easily discounted using the quotient network. We can for instance eliminate the symmetry-induced redundancies, or use them to simplify the calculation by avoiding unnecessary computations. Symmetry has also a profound effect on the spectrum, explaining the characteristic `peaks' observed in the spectral densities of empirical networks, and occurring at values we are able to predict. 

Our framework is very general and apply to any pairwise or vertex-based network measure beyond the ones we discuss as examples. We emphasised practical and algorithmic aspects throughout, and provide pseudocode and full implementations\cite{BitBucket}. Since real-world network models and data are very common, and typically contain a large degree of structural redundancy, our results should be relevant to any network practitioner.

\section*{Methods}
\footnotesize

\subsection*{Geometric decomposition and symmetric motifs}
We write $\text{Aut}(\mathcal G)$ for the automorphism group of an (unweighted, undirected, possibly very large) network $\mathcal G = (V,E)$ (see below for a discussion of directed and weighted networks). 
Each automorphism (symmetry) $\sigma \in \text{Aut}(\mathcal G)$ is a permutation of the vertices and its \emph{support} is the set of vertices moved by $\sigma$,
\begin{equation}
	\textup{supp}(\sigma) =\{ i \in V \text{ such that } \sigma(i) \neq i \}.
\end{equation}
Two automorphisms $\sigma$ and $\tau$ are \emph{support-disjoint} if the intersection of their supports is empty, $\textup{supp}(\sigma) \cap \textup{supp}(\tau) = \emptyset$. The \emph{orbit} of a vertex $i$ is the set of vertices to which $i$ can be moved to by an automorphism, that is,
\begin{equation}\label{eq:orbit}
	\{ \sigma(i) \text{ such that } \sigma \in \textup{Aut}(\mathcal G) \}.
\end{equation}
One can show\cite{macarthur2008symmetry} that there is a partition a set $X$ of generators of $\text{Aut}(\mathcal G)$ into its finest support-disjoint classes $X=X_1\cup\ldots\cup X_m$ which is unique up to permutation of the sets $X_i$. The vertex sets $M_i = \cup_{\sigma \in X_i} \textup{supp}(\sigma)$ give the geometric decomposition Eq.~\eqref{eqn:VertexDecomp}, and the subgraphs induced by them are, by definition, the \emph{symmetric motifs} of $\mathcal G$. (The next section explains how to compute the geometric decomposition in practice.) Since support-disjoint automorphisms must commute (the order in which they are composed is irrelevant), the subgroups of $\text{Aut}(\mathcal G)$ generated by $X_1$ to $X_m$, call them $H_1$ to $H_m$, give a direct product decomposition $\text{Aut}(\mathcal G) = H_1 \times \ldots \times H_m$. The geometric decomposition is defined from the finest support-disjoint partition of a special set of generators (called \emph{essential}), as explained in\cite{macarthur2008symmetry}. However, the results in this article are valid for any support-disjoint decomposition of any set of generators (essential or not) of $\text{Aut}(\mathcal G)$. 

If all the orbits of a symmetric motif have the same size $k$ and every permutation of the vertices in each orbit can be extended to a network automorphism supported on the motif, we call the symmetric motif \emph{basic} (or BSM) of \emph{type} $k$. (In particular, the corresponding subgroup $H_i$ must be $\text{Sym}(k)$, the symmetric group of all permutations of $k$ elements.) If a symmetric motif is not basic, we call it \emph{complex} or of \emph{type} 0.

\subsection*{Network symmetry computation}
First, we compute a list of generators of the automorphism group from an edge list (we use \texttt{saucy3} \cite{saucy3}, which is extremely fast for the large but sparse networks typically found in applications). Then, we partition the set of generators $X$ into support-disjoint classes $X=X_1 \cup \ldots \cup X_m$, that is, $\sigma$ and $\tau$ are support-disjoint whenever $\sigma \in X_i$, $\tau \in X_j$ and $i \neq j$. To find the finest such partition, we use a bipartite graph representation of vertices $V$ and generators $X$. Namely, let $\mathcal B$ be the graph with vertex set $V \cup X$ and edges between $i$ and $\sigma$ whenever $i \in \text{supp}(\sigma)$. Then $X_1, \ldots, X_m$ are the connected components of $\mathcal B$ (as vertex sets intersected with $X$). Each $X_i$ corresponds to the vertex set $M_i$ of a symmetric motif $\mathcal M_i$, as 
$
	M_i = \bigcup_{\sigma \in X_i} \textup{supp}(\sigma).
$
Finally, we use GAP \cite{GAP4} to compute the orbits and type of each symmetric motif (Alg.~\ref{alg:GAP}). Full implementations of all the procedures outlined above are available at a public repository \cite{BitBucket}. 


\begin{algorithm2e}
\SetAlgoLined
\DontPrintSemicolon

\SetKwData{orbit}{orbit}
\SetKwFunction{Group}{Group}\SetKwFunction{Orbits}{Orbits}\SetKwFunction{Size}{size}\SetKwFunction{Max}{max}\SetKwFunction{Min}{min}\SetKwFunction{IsNaturalSymmetricGroup}{IsNaturalSymmetricGroup}\SetKwFunction{Action}{Action}

\KwIn{$X$ a set of permutations of a symmetric motif}
\KwOut{$O_1,\ldots,O_k$ orbits, and type $m$, of the symmetric motif}
\BlankLine
$H \leftarrow \Group(X)$\;
$\{O_1,\ldots,O_k\} \leftarrow \Orbits(H)$\;
\BlankLine
$m \leftarrow \Min(\Size(O_1),\ldots,\Size(O_k))$\;
\eIf{$m ==  \Max(\Size(O_1),\ldots,\Size(O_k))$}{
\For{$i \leftarrow 1$ \KwTo $k$}{
	\If{not $\IsNaturalSymmetricGroup(\Action(H,O_i))$}{
	$m \leftarrow 0$\; break\;}
}
}{$m \leftarrow 0$\;}

\caption{Orbits and type of a symmetric motif.}
\label{alg:GAP}
\end{algorithm2e}

\subsection*{Structural network measures}
We prove below the structural result for BSMs for arbitrary graphs and network measures. The proof is a generalisation of the argument on \cite[p.48]{liebeck1988graphs} to weighted directed graphs with symmetries. 
\begin{theorem}\label{thm:BSM}
Let $M$ be the vertex set of a BSM of a network $\mathcal G$, and $F$ a structural network measure. Then the graph induced by $M$ in $F(\mathcal G)$ is a BSM of $F(\mathcal G)$, and satisfies:
\begin{itemize}
\item[(i)] for each orbit $\Delta=\{v_1,\ldots,v_n\}$, there are constants $\alpha$ and $\beta$ such that the orbit internal connectivity is given by $\alpha=F(v_i,v_j)$ for all $i \neq j$ and $\beta = F(v_i,v_i)$ for all $i$;
\item[(ii)] for every pair of orbits $\Delta_1$ and $\Delta_2$, there is a labelling $\Delta_1=\{v_1,\ldots,v_n\}$, $\Delta_2=\{w_1,\ldots,w_n\}$ and constants $\gamma_1$, $\gamma_2$, $\delta_1$, $\delta_2$ such that $\gamma_1 = F(v_i,w_j)$, $\gamma_2 = F(w_j,v_i)$, $\delta_1 =F(v_i,w_i)$, and $\delta_2=F(w_i,v_i)$, for all $i \neq j$; 
\item[(iii)] every vertex $v$ not in the BSM is joined uniformly to all the vertices in each orbit $\{v_1,\ldots,v_n\}$ in the BSM, that is, $F(v,v_i)=F(v,v_j)$ and $F(v_i,v)=F(v_j,v)$ for all $i,j$. 
\end{itemize}
Moreover, property (iii) holds in general for any symmetric motif.
\end{theorem}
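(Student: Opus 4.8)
\smallskip\noindent The plan is to work with the subgroup $H\leq\mathrm{Aut}(\mathcal G)$ generated by the symmetry generators supported inside $M$, using throughout only the defining identity $F(\sigma i,\sigma j)=F(i,j)$ from Eq.~\eqref{eq:StructuralFunction}. By construction $H$ fixes $V\setminus M$ pointwise and, since $M$ is a BSM, projects \emph{onto} the full symmetric group $\mathrm{Sym}(\Delta)$ of each orbit $\Delta$ of the motif; moreover $H\leq\mathrm{Aut}(F(\mathcal G))$ by Eq.~\eqref{eq:StructuralFunctionMatrix}, so $M$ carries the same orbits in $F(\mathcal G)$ and remains a BSM there. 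I would first dispose of (iii), which needs nothing about basicness: given $v\notin M$ and $v_i,v_j$ in a common orbit of the motif, pick $\sigma\in H$ with $\sigma(v_i)=v_j$; then $\sigma(v)=v$, so $F(v,v_i)=F(\sigma v,\sigma v_i)=F(v,v_j)$, and likewise $F(v_i,v)=F(v_j,v)$ --- the identical argument, with $H$ replaced by the group generated by the generators of an arbitrary motif, gives the final sentence of the theorem. For (i), lift the permutation of $\Delta$ sending $v_i\mapsto v_k,\ v_j\mapsto v_l$ (any $i\neq j$, $k\neq l$) to some $\sigma\in H$ to get $F(v_i,v_j)=F(v_k,v_l)=:\alpha$, and lift any permutation with $v_i\mapsto v_k$ to get $F(v_i,v_i)=F(v_k,v_k)=:\beta$.

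\smallskip\noindent Part (ii) is where the work lies. Restricting $H$ to $\Delta_1\cup\Delta_2$ produces a subgroup $\bar H\leq\mathrm{Sym}(\Delta_1)\times\mathrm{Sym}(\Delta_2)\cong S_n\times S_n$ whose two coordinate projections onto $S_n$ are both surjective. I would analyse $\bar H$ through the normal subgroups $K_1=\{g:(g,e)\in\bar H\}$ and $K_2=\{h:(e,h)\in\bar H\}$ of $S_n$; counting $|\bar H|=n!\,|K_1|=n!\,|K_2|$ gives $|K_1|=|K_2|$, so if $K_1\neq\{e\}$ then $K_1$ and $K_2$ are nontrivial normal subgroups of $S_n$ of equal order, hence both $S_n$, both $A_n$, or (only for $n=4$) both $V_4$, and in each case both act transitively on their $n$ points. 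Transitivity together with surjectivity of the projections then yields, for \emph{every} $i,j,k,l$, some $\sigma\in\bar H$ with $\sigma(v_i)=v_k$ and $\sigma(w_j)=w_l$, so both value matrices $\big(F(v_i,w_j)\big)$ and $\big(F(w_i,v_j)\big)$ are constant --- the uniform join with $\gamma_1=\delta_1$ and $\gamma_2=\delta_2$. If instead $K_1=K_2=\{e\}$, the projections identify $\bar H$ with the graph $\{(g,\theta(g)):g\in S_n\}$ of an automorphism $\theta$ of $S_n$; whenever $\theta$ is inner --- automatically so for $n\neq 6$ --- I relabel $\Delta_2$ to turn $\theta$ into the identity, after which $F(v_{g(i)},w_{g(j)})=F(v_i,w_j)$ for all $g\in S_n$ forces $F(v_i,w_j)$ to depend only on whether $i=j$ (choose $g$ with $g(i)=k$, $g(j)=l$), and likewise for the transposed matrix --- exactly (ii). The tiny cases $n\leq 2$ drop out of the same dichotomy.

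\smallskip\noindent The main obstacle is the single leftover configuration: $n=6$ with $\theta$ the outer automorphism of $S_6$, where the diagonal relabelling is not available. I would settle it by invoking the classical fact that this $\theta$ sends a point stabiliser $S_5\leq S_6$ to a \emph{transitive} copy of $S_5$ in $S_6$; hence the $\bar H$-stabiliser of any $v_i$ still acts transitively on $\Delta_2$, so $w\mapsto F(v_i,w)$ is constant, and letting $i$ range over $\Delta_1$ (using transitivity of $\bar H$ on $\Delta_1$) shows these constants all agree --- once more a uniform join. Apart from this one finite group-theoretic input, every part of the proof is a single application of $F(\sigma i,\sigma j)=F(i,j)$ followed by a choice of a suitable $\sigma$.
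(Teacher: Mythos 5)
Your proof is correct. Parts (i) and (iii) coincide with the paper's own argument (lift a suitable permutation of the orbit to an automorphism supported in the motif, which necessarily fixes any external vertex), but for part (ii) you take a genuinely different, Goursat-style route: you analyse the restricted group $\bar H\leq \mathrm{Sym}(\Delta_1)\times\mathrm{Sym}(\Delta_2)$ through its two kernels $K_1,K_2$, and split into the nontrivial-kernel case (where transitivity of $S_n$, $A_n$ or $V_4$ together with surjectivity of the projections makes the whole value matrix constant, i.e.\ a uniform join with $\gamma_1=\delta_1$) and the trivial-kernel case, where $\bar H$ is the graph of an automorphism $\theta$ of $S_n$ and an inner $\theta$ is eliminated by relabelling $\Delta_2$. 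The paper instead follows Liebeck's argument: it treats the acting group as a single copy of $S_n$, compares the point stabilisers $G_{v_1}$ and $G_{w_1}$ as index-$n$ subgroups (unique up to conjugacy when $n\neq 6$), relabels so that $G_{v_i}=G_{w_i}$, and reads off that the $G_{v_i}$-orbits on $\Delta_2$ are $\{w_i\}$ and $\Delta_2\setminus\{w_i\}$. The group-theoretic input is the same classical fact in two guises (triviality of $\mathrm{Out}(S_n)$ for $n\neq 6$ versus uniqueness of the conjugacy class of index-$n$ subgroups), and your exceptional $n=6$ case (the outer automorphism carries a point stabiliser to a transitive $S_5$, so the stabiliser of $v_i$ is still transitive on $\Delta_2$) is exactly the paper's ``second conjugacy class of index-$6$ subgroups acts transitively'' step. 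What your route buys is that it makes explicit a possibility the paper's proof silently identifies away: a priori the joint action on two orbits need not be through a group isomorphic to $S_n$ acting faithfully on each orbit, and instead of arguing that nontrivial kernels cannot occur for a genuine two-orbit BSM, you observe that in that case the conclusion of (ii) holds anyway with a constant join, which is a clean and slightly more general disposal of it; the modest extra cost is invoking the classification of normal subgroups of $S_n$, which the paper's stabiliser argument does not need.
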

If $\mathcal G$ is undirected and $F$ is symmetric, $\gamma_1=\gamma_2$ and $\delta_1=\delta_2$ and each orbit is a $(\alpha,\beta)$-uniform graph $K_n^{\alpha,\beta}$ and each pair of orbits form a $(\gamma,\delta)$-uniform join, explaining Figure \ref{fig:uniform}(a, b). 
\begin{proof}[Proof of Theorem]
As $F(\mathcal G)$ inherits all the symmetries of $\mathcal G$, $M$ has the same orbit decomposition and the symmetric group $S_n$ acts in the same way, hence $M$ induces a BSM in $F(\mathcal G)$ too. For the internal connectivity, note that every permutation of the vertices $v_i$ is realisable. Thus, given arbitrary $1\le i,j,k,l \le n$, we can find $\sigma \in Aut(\mathcal G)$ such that $\sigma(v_k)=v_i$ and, if $j\neq i$ and $l \neq k$, additionally satisfies $\sigma(v_l)=v_j$. This gives
\[
	F(v_i,v_j)=F(\sigma(v_k),\sigma(v_l))=F(v_k,v_l),
\]
as $F$ is a structural network measure. The other case, $i=j$ and $k=l$, gives
\[
	F(v_i,v_i)=F(\sigma(v_k),\sigma(v_k))=F(v_k,v_k).
\]
For the orbit connectivity result (ii), we generalise the argument in \cite[p.48]{liebeck1988graphs} to weighted directed graphs with symmetries, particularly $F(\mathcal G$). We assume some basic knowledge and terminology about group actions \cite{rotman2012introduction} and symmetric groups $S_n$. Given two orbits $\Delta_1=\{ v_1, \ldots, v_n\}$ and $\Delta_2=\{ w_1, \ldots, w_n\}$ and $1 \le i \le n$, define
\[
	\Gamma_i = \left\{ w_j \in \Delta_2 \,|\, F(v_i,w_j) \neq 0 \right\},
\]
the vertices in $\Delta_2$ joined to $v_i$ in $F(\mathcal G)$. If a finite group $G$ acts on a set $X$, the stabiliser of a point $G_x = \{ g\in G \,|\, gx=x\}$ is a subgroup of $G$ of index $[G : H] =\frac{|G|}{|H|}$ equals to the size of the orbit of $x$. Hence the stabilisers $G_{v_i}$ or $G_{w_j}$ are subgroups of $S_n$ of index $n$, for all $i, j$. The group $S_n$ has a unique, up to conjugation, subgroup of index $n$ if $n\neq 6$. In this case, $G_{v_1}$ is conjugate to $G_{w_1}$ so $G_{v_1} = \sigma G_{w_1} \sigma^{-1} = G_{\sigma w_1}$ for some $\sigma \in S_n$. Relabelling $\sigma w_1$ as $w_1$ we have $G_{v_1}=G_{w_1}$. Similarly, we can relabel the remaining vertices in $\Delta_2$ so that $G_{v_i}=G_{w_i}$ for all $i$: write $v_2=\sigma_2v_1$, $v_3=\sigma_3v_1$, $\ldots$ and relabel $w_2=\sigma_2w_1$, $w_3=\sigma_3w_1$, $\ldots$, noticing there cannot be repetitions as $\sigma_kw_1=\sigma_lw_1$ for $k\neq l$ implies $\sigma_k\sigma_l^{-1} \in G_{w_1}=G_{v_1}$ and thus $v_k=\sigma_kv_1=\sigma_lv_1=v_l$, a contradiction. Fix $1 \le i \le n$. The stabiliser $G_{v_i}$ fixes $v_i$ but it may permute vertices in $\Delta_2$. In fact, the set $\Gamma_i$ above must be a union of orbits of $G_{v_i}$ on $\Delta_j$: if $w \in \Gamma_i$ and $\sigma \in G_{v_i}$ then 
\[
	0 \neq F(v_i,w) = F(\sigma v_i, \sigma w) = F(v_i, \sigma w)
\]
so $\sigma w$ also belongs to $\Gamma_i$.  The orbits of $G_{v_i}=G_{w_i}$ in $\Delta_2$ are $\{w_i\}$ and $\Delta_2 \setminus \{w_i\}$, as $G_{w_i}$ fixes $w_i$ and freely permutes all other vertices in $\Delta_2$. The case $n=6$ is similar, except that $S_6$ has two conjugacy classes of subgroups of index $6$, one as above, and the other a subgroup acting transitively on the 6 vertices, which gives a unique orbit $\Delta_2$. In all cases, the set $\Delta_2 \setminus \{w_i\}$ is part of an $G_{v_i}$-orbit, which gives the connectivity result, as follows. Fix $1 \le i \le n$. For $1 \le j, k \le n$ different from $i$, the vertices $w_j$ and $w_k$ are in the same $G_{v_i}$-orbit so there is $\sigma \in G_{v_i}$ with $\sigma w_j = w_k$ and, therefore,
\[
	F(v_i,w_j) = F(\sigma v_i, \sigma w_j) = F(v_i, w_k).
\]
The argument is general, so we have shown $a_i = F(v_i,w_j)$ is constant for all $j \neq i$. It is enough to show $a_i = a_1$ for all $i$. Choose $j \neq i$, then
\[
	a_i = F(v_i,w_j) = F(\sigma_i v_1, \sigma_j w_1) = F(v_1, \sigma_i^{-1}\sigma_j w_1) = a_1
\]
as long as $ \sigma_i^{-1}\sigma_j w_1 \neq w_1$, which cannot happen as otherwise $ \sigma_i^{-1}\sigma_j \in G_{w_1}=G_{v_1}$ implies $ \sigma_i^{-1}\sigma_j v_1 = v_1$ or $ v_j = \sigma_j v_1 =  \sigma_i v_1 = v_i$, a contradiction. Hence we have shown $F(v_i,w_j)$ is a constant, call it $\gamma_1$, for all $i \neq j$. In addition, 
\[
	F(v_i,w_i)=F(\sigma_i v_1, \sigma_i w_1)=F(v_1,w_1)
\]
is also a constant, call it $\delta_1$, for all $i$. The cases $\gamma_2=F(w_j,v_i)$ and $\delta_2=F(w_i,v_i)$ are identical, reversing the roles of $\Delta_1$ and $\Delta_2$.

Property (iii) holds for any symmetric motif, not necessarily basic, as follows. By the definition of orbit, for each pair $i,j$ we can find an automorphism $\sigma$ in the geometric factor such that $\sigma(v_j)=v_i$. Since $v$ is not in the support of that geometric factor, it is fixed by $\sigma$, that is, $\sigma(v)=v$. Therefore
\[
	F(v,v_i)=F(\sigma(v),\sigma(v_j)) = F(v,v_j),
\]
and similarly $F(v_i,v)=F(v_j,v)$.
\end{proof}

\subsection*{Average compression}
\smallskip
\begin{theorem}\label{thm:avgcomp}
Let $A=(a_{ij})$ be the $n \times n$ adjacency matrix of a network with vertex set $V$. Let $S$ be the $n \times m$ characteristic matrix of the partition of $V$ into orbits of the automorphism group, and $\Lambda$ the diagonal matrix of column sums of $S$. Define $B=S^TAS$ and $A_\textup{avg} = R BR^T = (\bar{a}_{ij})$ where $R=S \Lambda^{-1}$. Then,
\begin{itemize}
\item[(i)] if $i, j \in V$ belong to different symmetric motifs, $\bar{a}_{ij} = a_{ij}$;
\item[(ii)] if $i, j \in V$ belong to orbits $i \in \Delta_1$ and $j \in \Delta_2$ in the same symmetric motif, 
\begin{equation}
	\bar{a}_{ij} = \frac{1}{|\Delta_1|}\frac{1}{|\Delta_2|} \sum_{\substack{u \in \Delta_1\\v \in \Delta_2}} a_{uv}.
\end{equation}
\end{itemize}
\end{theorem}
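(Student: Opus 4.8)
The plan is to recognise $A_{\textup{avg}}$ as $PAP$ for the orbit-averaging projector $P=S\Lambda^{-1}S^T$, write down its entries explicitly, and then read off both statements directly from the structure of symmetric motifs (Theorem \ref{thm:BSM}).

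First I would note that, since $R=S\Lambda^{-1}$,
\[
A_{\textup{avg}}=RBR^T=S\Lambda^{-1}S^T\,A\,S\Lambda^{-1}S^T=PAP,\qquad P:=S\Lambda^{-1}S^T .
\]
The columns of $S$ are the disjoint indicator vectors of the orbits $V_1,\dots,V_m$, so $S^TS=\Lambda$; hence $P$ is symmetric and idempotent (the orthogonal projection onto orbit-constant vectors), and a one-line entrywise computation gives $P_{ij}=1/|V_k|$ when $i,j$ lie in the same orbit $V_k$ and $P_{ij}=0$ otherwise. Multiplying out, for $i\in V_k$ and $j\in V_l$ one obtains
\[
\bar a_{ij}=[PAP]_{ij}=\frac{1}{|V_k|}\,\frac{1}{|V_l|}\sum_{u\in V_k,\ v\in V_l}a_{uv},
\]
i.e.\ the average connectivity between the orbits $V_k$ and $V_l$ (equivalently $\bar a_{ij}=\tfrac{1}{n_i n_j}b_{kl}$ with $B=(b_{kl})$). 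Part (ii) is then immediate: if $V_k=\Delta_1$ and $V_l=\Delta_2$ are orbits of the same symmetric motif, this is exactly the claimed formula.

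For part (i) I would collapse the double average in two steps using Theorem \ref{thm:BSM}(iii), which holds for every symmetric motif (not only basic ones). Suppose $i\in V_k$ and $j\in V_l$ lie in different symmetric motifs $\mathcal M_1\supseteq V_k$ and $\mathcal M_2\supseteq V_l$ (a fixed point may be treated as a singleton orbit / trivial motif, in which case the corresponding step is vacuous). Fix $v\in V_l$; since $v\notin\mathcal M_1$, property (iii) says $v$ is joined uniformly to all vertices of the orbit $V_k$, so $a_{uv}=a_{iv}$ for every $u\in V_k$ and the inner sum over $u$ just contributes a factor $|V_k|$, giving $\bar a_{ij}=\tfrac{1}{|V_l|}\sum_{v\in V_l}a_{iv}$. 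Now $i\notin\mathcal M_2$, so applying (iii) again -- this time to the orbit $V_l$ -- gives $a_{iv}=a_{ij}$ for all $v\in V_l$, hence $\bar a_{ij}=a_{ij}$.

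The computation is essentially routine; the only thing needing care is the bookkeeping of singleton orbits / fixed points and the verification that each of the two applications of Theorem \ref{thm:BSM}(iii) is legitimate -- that is, that the vertex being ``averaged in'' genuinely lies outside the symmetric motif over whose orbit we are summing. This is precisely guaranteed by the hypothesis that $i$ and $j$ belong to different symmetric motifs, so I do not expect any real obstacle beyond stating this carefully.
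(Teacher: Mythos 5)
Your proposal is correct and takes essentially the same route as the paper: both reduce $A_\textup{avg}=RBR^T$ (your $PAP$) entrywise to the double average $\bar a_{ij}=\tfrac{1}{|V_k||V_l|}\sum_{u\in V_k,v\in V_l}a_{uv}$, which gives (ii) immediately, and both obtain (i) by showing the summands are constant when the two orbits lie in different motifs. The only cosmetic difference is that you invoke Theorem~\ref{thm:BSM}(iii) twice (external vertices join an orbit uniformly), while the paper exhibits the corresponding automorphisms $\sigma,\tau$ inline --- the same mechanism; your remark on fixed points as singleton orbits is a fine piece of bookkeeping.
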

Before proving this statement, we make a few observations. The column sums of $S$ equal the sizes of the vertex partition sets, hence $\Lambda$ is the same as in the definition of quotient matrix \eqref{eq:QuotientMatrixDef}, and can be obtained easily from $S$. The matrix $S$ is very sparse (each row has a unique non-zero entry) and can be stored very efficiently. Case (i) covers the vast majority of vertex pairs (external edges) for a network measure (see $ext_s$ and $int_f$ in Table \ref{Table1}). In (ii), the case $\Delta_1=\Delta_2$ is allowed. The matrix $B=S^TAS$ is symmetric with integer entries if $A$ is too, hence generally easier to store than $Q(A)=\Lambda^{-1}S^TAS$. 
\begin{proof}[Proof of Theorem]
Let $V=\Delta_1\cup \ldots \cup \Delta_m$ be the partition into orbits, and write $n_k = |\Delta_k|$. Clearly, the row sums of $S$ equals $n_1,\ldots,n_m$. Writing $[M]_{ij}$ for the $(i,j)$-entry of a matrix $M$, matrix multiplication gives
\[
	[R]_{ik} = \sum_{l} [S]_{il} [\Lambda^{-1}]_{lk} \stackrel{l=k}{=} [S]_{ik} \frac{1}{n_k} = 
	\begin{cases} \frac{1}{n_k} & \text{if $i \in \Delta_k$,} \\ 0 & \text{otherwise.}\end{cases}
\]
Similarly, assuming $i \in \Delta_k$ and $j \in \Delta_l$, we have
\[
	\bar{a}_{ij} = [RBR^T]_{ij} = \sum_{\alpha,\beta} [R]_{i \alpha}[B]_{\alpha \beta}[R]_{j\beta} = \frac{1}{n_k}\frac{1}{n_l}[B]_{kl} = \frac{1}{n_k}\frac{1}{n_l} \sum_{\substack{u \in \Delta_k\\ v \in \Delta_l}} a_{uv}.
\]
This expression reduces to $a_{ij}$ if the orbits belong to different symmetric motifs, since in this case all the summands in $\sum_{\substack{u \in \Delta_k,  v \in \Delta_l}} a_{uv}$ are equal to one another. Indeed, given $i_1,i_2 \in \Delta_k$ and $j_1,j_2\in\Delta_l$, we can find, by the definition of orbit and symmetric motif, automorphisms $\sigma$ and $\tau$ such that $\sigma(i_1)=i_2$ while fixing $j_1$, and $\tau(j_1)=j_2$ while fixing $i_1$. This gives
\[
	a_{i_1j_1} = a_{\tau\sigma(i_1)\tau\sigma(j_1)} = a_{\tau(i_2)\tau(j_1)} = a_{i_2j_2}.\qedhere
\]
\end{proof}
A similar proof shows that we can recover exact inter-motif connectivity (external edges), and average intra-motif connectivity (average internal edges) from the quotient network, as follows. 
\begin{corollary}\label{thm:quotientrecovery}
Let $A=(a_{ij})$ be the $n \times n$ adjacency matrix of a network and $Q(A)=(b_{kl})$ its quotient with respect to the partition into orbits of the automorphism group $V=\Delta_1\cup\ldots \cup\Delta_m$. Suppose that $i \in \Delta_k$, $j \in \Delta_l$. Then,
\begin{itemize}
\item[(i)] if the orbits $\Delta_k$ and $\Delta_l$ belong to different symmetric motifs, $b_{kl} = a_{ij}$;
\item[(ii)] if the orbits $\Delta_k$ and $\Delta_l$ belong to the same symmetric motif, 
\begin{equation}
	b_{kl} = \frac{1}{|\Delta_k|} \sum_{\substack{u \in \Delta_k\\v \in \Delta_l}} a_{uv}.
\end{equation}
\end{itemize}
\end{corollary}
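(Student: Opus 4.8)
The plan is to read item~(ii) directly off the definition of the quotient matrix, and to obtain item~(i) by the same constancy-of-summands argument used in the proof of Theorem~\ref{thm:avgcomp}. Specialising \eqref{eqn:quotient} to the partition of $V$ into orbits of $\text{Aut}(\mathcal G)$ gives
\[
	b_{kl} = \frac{1}{|\Delta_k|}\sum_{\substack{u\in\Delta_k\\ v\in\Delta_l}} a_{uv}
\]
by definition, for \emph{any} pair of orbits $\Delta_k,\Delta_l$ (including the degenerate case $k=l$); this is exactly the assertion of item~(ii), so no work is needed there beyond recalling \eqref{eq:QuotientMatrixDef}.

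For item~(i), I would show that when $\Delta_k$ and $\Delta_l$ lie in two \emph{different} symmetric motifs every summand $a_{uv}$ in the display above equals $a_{ij}$, so that the average collapses. Fix $u\in\Delta_k$ and $v\in\Delta_l$. Since $i$ and $u$ lie in a common orbit and the automorphisms generating the symmetric motif of $\Delta_l$ are support-disjoint from those generating the motif of $\Delta_k$, there is $\sigma\in\text{Aut}(\mathcal G)$ with $\sigma(i)=u$ and $\sigma(j)=j$; symmetrically there is $\tau\in\text{Aut}(\mathcal G)$ with $\tau(j)=v$ and $\tau(u)=u$. Then $a_{uv}=a_{\tau\sigma(i)\,\tau\sigma(j)}=a_{ij}$, because $A$, being the adjacency matrix of a structural measure, satisfies $a_{\sigma(s)\sigma(t)}=a_{st}$ for every automorphism. (Equivalently, one may quote property~(iii) of Theorem~\ref{thm:BSM} twice: once for the motif containing $\Delta_l$ with external vertex $u$, and once for the motif containing $\Delta_k$ with external vertex $v$.) Substituting $a_{uv}\equiv a_{ij}$ back into the formula for $b_{kl}$ then yields the claimed value.

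This really is a corollary of Theorem~\ref{thm:avgcomp} rather than a new result, and I expect the only delicate point to be the elementary bookkeeping with orbit sizes: the quotient matrix carries the single normalisation $1/|\Delta_k|$ of \eqref{eqn:quotient}, whereas $A_\textup{avg}=RBR^{T}$ in Theorem~\ref{thm:avgcomp} carries the double normalisation $1/(|\Delta_k|\,|\Delta_l|)$, so one must track the $|\Delta_l|$ factor when transferring the argument. A clean way to avoid re-running the group-theoretic step is to note $Q(A)=\Lambda^{-1}(S^{T}AS)=\Lambda^{-1}B$ and reuse the identity $\bar a_{ij}=\tfrac{1}{|\Delta_k|\,|\Delta_l|}[B]_{kl}$ established inside the proof of Theorem~\ref{thm:avgcomp}, together with its conclusion $\bar a_{ij}=a_{ij}$ in case~(i); items~(i) and~(ii) for $b_{kl}$ then follow at once by clearing the normalising factors.
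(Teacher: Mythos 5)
Your item (ii) is indeed immediate from \eqref{eqn:quotient}, and the group-theoretic core of your item (i) --- using support-disjointness of the motifs' generators (equivalently, quoting Theorem~\ref{thm:BSM}(iii) twice) to show that $a_{uv}$ is constant over $u\in\Delta_k$, $v\in\Delta_l$ when the two orbits lie in different symmetric motifs --- is exactly the ``similar proof'' to Theorem~\ref{thm:avgcomp} that the paper has in mind. The gap is in your final step. Substituting the constant value $a_{uv}=a_{ij}$ into $b_{kl}=\tfrac{1}{|\Delta_k|}\sum_{u\in\Delta_k,\,v\in\Delta_l}a_{uv}$ gives $b_{kl}=|\Delta_l|\,a_{ij}$, not $a_{ij}$: the sum has $|\Delta_k|\,|\Delta_l|$ equal terms and the quotient normalises only by $|\Delta_k|$. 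Your shortcut fares the same: from $\bar a_{ij}=\tfrac{1}{|\Delta_k|\,|\Delta_l|}[S^TAS]_{kl}$ and $Q(A)=\Lambda^{-1}S^TAS$ one gets $b_{kl}=|\Delta_l|\,\bar a_{ij}=|\Delta_l|\,a_{ij}$, so the normalising factors do not ``clear''. This is precisely the $|\Delta_l|$ bookkeeping you yourself flagged as the delicate point, and the proposal never resolves it --- ``yields the claimed value'' is where the argument breaks.

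To reach the literal equality $b_{kl}=a_{ij}$ you would have to show in addition that $a_{ij}=0$ or $|\Delta_l|=1$. But every orbit contained in a symmetric motif has size at least $2$ (each of its vertices is moved by some automorphism), so this amounts to proving that orbits lying in different symmetric motifs are never adjacent, which your argument does not attempt and which is not true in general: Theorem~\ref{thm:BSM}(iii) forces any such adjacency to be all-to-all, and, for instance, a standalone $K_{2,3}$ decomposes into two support-disjoint motifs (its two sides) that are completely joined, giving $b_{kl}=|\Delta_l|\ge 2$ while $a_{ij}=1$. What your argument honestly establishes is $b_{kl}=|\Delta_l|\,a_{ij}$ for external pairs --- which still permits exact recovery of $a_{ij}$, since $|\Delta_l|$ is known from $S$ --- whereas the exact pointwise identity for external pairs is what the two-sided normalisation $\bar a_{ij}=\tfrac{1}{|\Delta_k|\,|\Delta_l|}[S^TAS]_{kl}$ of Theorem~\ref{thm:avgcomp} delivers (and note the printed identity in (i) carries the same factor-of-$|\Delta_l|$ issue whenever such an adjacency occurs). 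So either carry the factor $|\Delta_l|$ into the conclusion of (i), or argue via $A_\text{avg}=RBR^T$ rather than $Q(A)$; as written, the final normalisation step fails.
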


\subsection*{Lossless compression}
We can achieve lossless compression by exploiting the structure of BSMs, which account for most of the symmetry in real-world networks. If the motif is basic, we can preserve the exact parent network connectivity in an annotated quotient, as follows. Each orbit in a BSM is a uniform graph $K_n^{\alpha,\beta}$ which appears in the quotient as a single vertex with a self-loop weighted by $(n-1)\alpha+\beta$ (Fig.~\ref{fig:uniform} (c)). Hence if we annotate this vertex in the quotient by not only $n$ but also $\alpha$, or $\beta$, we can recover the internal connectivity. Similarly, the connectivity between two orbits in the same symmetric motif is given by two parameters $\gamma$, $\delta$ and appears in the quotient as an edge weighted $(n-1)\gamma+\delta$ (Fig.~\ref{fig:uniform} (d)) and thus can also be recovered from a quotient with edges annotated by $\gamma$, or $\delta$. 

Since there is no general formula for an arbitrary non-basic symmetric motif, we can work with the \emph{basic quotient} $\mathcal Q_\text{basic}$ instead, that is, the quotient with respect to the partition of the vertex set into orbits in BSMs only (vertices in non-basic symmetric motifs become fixed points hence part of the asymmetric core). The annotated (as above) basic quotient achieves most of the symmetry reduction in a typical empirical network ($\tilde{n}_\mathcal{Q}^\text{basic} \approx \tilde{n}_{\mathcal{Q}}$, $\tilde{m}_\mathcal{Q}^\text{basic} \approx \tilde{m}_{\mathcal{Q}}$, (see Table \ref{Table1}, caption). To maintain the same vertex labelling as in the parent network, we record, for each pairs of orbits in the same symmetric motif, the corresponding permutation of the second orbit (else we recover the adjacency matrix only up to permutations of the orbits). 

Algorithms for lossless compression and recovery based on the basic quotient are shown below (Algorithms \ref{alg:llcompression} and \ref{alg:lldecompression}), and MATLAB implementations for BSMs up to two orbits are available at a public repository\cite{BitBucket}. The results reported in Fig.~\ref{fig:compression} are with respect to these implementations, and the actual compression ratios reported include the size of the annotation data for lossless compression with vertex identity (a very small fraction of the size of the quotient in practice, adding at most $0.02$\% to the basic full compression ratio in all our test cases). 


\begin{algorithm2e}
\SetAlgoLined
\DontPrintSemicolon
\SetKwData{Motifs}{motifs}\SetKwData{MotifType}{motiftype}\SetKwData{Orbits}{orbits}\SetKwData{Rep}{rep}\SetKwData{Orb}{orb}\SetKwData{BSM}{BSM}\SetKwData{bsm}{bsm}\SetKwData{Perm}{perm}
\SetKwFunction{Min}{min}\SetKwFunction{Max}{max}\SetKwFunction{Size}{size}

\KwIn{adjacency matrix $A$, characteristic matrix for the basic quotient $S$, list of BSMs \Motifs}
\KwOut{quotient matrix $B$, annotation structure $a$}
\BlankLine
$B \leftarrow S^TAS$\;
\BlankLine
extract \Orbits from $S$\;
\ForEach{\Orb in \Orbits}{
$\Rep \leftarrow \Min(\Orb)$\;
$\beta \leftarrow A(\Rep,\Rep)$\;
store $\beta$ in annotation structure $a$\;
}
\BlankLine
$k_\text{max} \leftarrow \Max(\Size(\Motifs))$ maximal number of orbits in a motif\;
\For{$k \leftarrow 2$ \KwTo $k_\text{max}$}{
extract $k$-\BSM (list of BSMs with $k$ orbits) from \Motifs\;
\ForEach{\bsm in $k$-\BSM}{
	\ForEach{pairs of distinct orbits $V_1, V_2$ in \bsm}{
		compute $\delta$ and permutation of $V_2$ \Perm such that $A(k,\Perm(k)) = \delta$ for all $k \in V_1$\;
		store orbit numbers (with respect to $S$), $\delta$ and \Perm in annotation structure $a$\;
		}
}
}
\caption{Lossless symmetry compression.} 
\label{alg:llcompression}
\end{algorithm2e}

\begin{algorithm2e}[t]
\SetAlgoLined
\DontPrintSemicolon
\SetKwData{Motifs}{motifs}\SetKwData{MotifType}{motiftype}\SetKwData{Orbits}{orbits}\SetKwData{Rep}{rep}\SetKwData{Orb}{orb}\SetKwData{BSM}{BSM}\SetKwData{bsm}{bsm}\SetKwData{Perm}{perm}\SetKwData{Pairs}{pairs}
\SetKwFunction{Min}{min}\SetKwFunction{Max}{max}\SetKwFunction{Size}{size}

\KwIn{quotient matrix $B$, characteristic matrix $S$, annotation structure $a$}
\KwOut{adjacency matrix $A$}
\BlankLine
$\Lambda \leftarrow \Diag(\Sum(S))$\;
$R \leftarrow S\Lambda^{-1}$\;
$A \leftarrow RBR^T$\;

\BlankLine
extract \Orbits from $S$\;
\ForEach{\Orb in \Orbits}{
$n \leftarrow \Size(\Orb)$\;
extract $\beta$ from $a$\;
compute $\alpha$ from $B$, $\beta$ and $n$ (using $[B]_{\Orb,\Orb} = n\left((n-1)\alpha+\beta\right))$\;
construct adjacency matrix of the orbit $A_n^{\alpha,\beta}$\;
$A(\Orb,\Orb) \leftarrow A_n^{\alpha,\beta}$\;
}

\BlankLine
extract \Pairs of orbits in the same BSM from $a$\;
\ForEach{$(V_1,V_2)$ in \Pairs}{
$n \leftarrow \Size(V_1)$\;
extract $\delta$, \Perm from $a$\;
compute $\gamma$ from $B$, $\delta$ and $n$ (using $[B]_{V_1,V_2} = n\left((n-1)\gamma+\delta\right))$\;
construct matrix $A_n^{\gamma,\delta}$\;
$A(V_1,\Perm) \leftarrow A_n^{\gamma,\delta}$\;
$A(\Perm,V_1) \leftarrow A_n^{\gamma,\delta}$\;
}
\caption{Lossless symmetry decompression.} 
\label{alg:lldecompression}
\end{algorithm2e}


\subsection*{Spectral signatures of symmetry}

The partition into orbits satisfy the following regularity condition\cite{cvetkovic2010introduction, brouwer2011spectra}. A partition of the vertex set $V=V_1\cup \ldots \cup V_m$ is \emph{equitable} if
\begin{equation}
	\sum_{j \in V_l} a_{i_1j} = \sum_{j \in V_l} a_{i_2j} \quad 
	\text{ for all } i_1, i_2 \in V_k, \text{ for all } 1 \le k, l \le m,
\end{equation}
that is, if the connectivity from a node in $V_i$ to all nodes in $V_j$ is independent of the chosen node in $V_i$. For completeness, we show the following. 

\begin{proposition}\label{prop:equitable}
Let $V=V_1 \cup \ldots V_m$ be a partition of the vertex set of a graph with adjacency matrix $A=(a_{ij})$, and let $S$ be the characteristic matrix of the partition. Write $Q(A)$ for the quotient with respect to the partition.
\begin{itemize}
\item[(i)] The partition is equitable if and only if $AS = SQ(A)$; 
\item[(ii)] The partition into orbits of the automorphism group is equitable. 
\end{itemize}
\end{proposition}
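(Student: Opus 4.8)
The plan is to treat the two parts separately: part (i) is a direct entrywise matrix computation, and part (ii) follows from the defining property of a graph automorphism together with the observation that automorphisms permute orbits setwise.

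For part (i), I would compute both sides of the claimed identity entrywise. Since $[S]_{jk}=1$ exactly when $j\in V_k$, we get $[AS]_{ik}=\sum_{j\in V_k}a_{ij}$, the connectivity from $i$ to $V_k$. On the other side, if $i\in V_r$ then $[SQ(A)]_{ik}=[Q(A)]_{rk}=b_{rk}=\tfrac{1}{n_r}\sum_{i'\in V_r,\,j\in V_k}a_{i'j}$ by the definition \eqref{eqn:quotient} (equivalently \eqref{eq:QuotientMatrixDef}) — the average connectivity from $V_r$ to $V_k$. Hence $AS=SQ(A)$ asserts precisely that, for every pair of blocks $V_r,V_k$, the number $\sum_{j\in V_k}a_{ij}$ equals, for each $i\in V_r$, its own average over $i\in V_r$. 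A finite family of real numbers all coincide with their average if and only if they are all equal, so this is equivalent to $\sum_{j\in V_k}a_{i_1j}=\sum_{j\in V_k}a_{i_2j}$ for all $i_1,i_2\in V_r$ and all $r,k$, which is the definition of equitability. This yields both implications simultaneously.

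For part (ii), let $V=V_1\cup\dots\cup V_m$ be the partition into orbits, fix a block $V_k$, vertices $i_1,i_2\in V_k$, and another block $V_l$. By definition of orbit there is $\sigma\in\text{Aut}(\mathcal G)$ with $\sigma(i_1)=i_2$. The key point is that $\sigma$ maps $V_l$ onto itself: for $j\in V_l$, $\sigma(j)$ lies in the same orbit as $j$, hence in $V_l$, and $\sigma$ is a bijection, so it restricts to a permutation of $V_l$. Reindexing the sum by $j\mapsto\sigma(j)$ and using $a_{\sigma(i)\sigma(j)}=a_{ij}$,
\[
\sum_{j\in V_l}a_{i_2 j}=\sum_{j\in V_l}a_{\sigma(i_1)\,j}=\sum_{j\in V_l}a_{\sigma(i_1)\,\sigma(j)}=\sum_{j\in V_l}a_{i_1 j},
\]
so the connectivity from a vertex of $V_k$ to $V_l$ does not depend on the chosen vertex, i.e.\ the orbit partition is equitable. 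Combined with (i), this also re-establishes the identity $AS=SB$ with $B=Q(A)$ used in the Main Text.

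I do not anticipate a genuine obstacle: the result is classical. The only points needing care are the elementary logical equivalence "every number in a finite family equals the family's average $\iff$ all are equal'' in part (i), and, in part (ii), the assertion that an automorphism maps each orbit to itself as a set — immediate, but exactly what legitimises the change of variables in the sum. As a stylistic alternative for (i) one could argue via $S^TAS$ having $(r,k)$-entry $\sum_{i\in V_r,\,j\in V_k}a_{ij}$ and $S^TS=\Lambda$, but the entrywise route above is the most transparent.
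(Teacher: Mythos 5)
Your proof is correct and follows essentially the same route as the paper: the same entrywise computation of $[AS]_{ik}$ versus $[SQ(A)]_{ik}$ for part (i) (you merely package the two implications into one step via the observation that finitely many reals all equal their average iff they are all equal, where the paper argues the two directions separately), and for part (ii) the same automorphism-plus-reindexing argument, with the change of variables justified exactly as in the paper by the fact that $\sigma$ permutes each orbit setwise.
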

\begin{proof} (i) 
Fix $1\le i \le n$ and $1\le k \le m$, and suppose $i \in V_l$. Then 
\begin{align*}
	[AS]_{ik} &= \sum_{j \in V_k } a_{ij},
\end{align*}
and, using the equitable condition, 
\begin{align*}
	[SQ_\text{l}(A)]_{ik} &= [Q_\text{l}(A)]_{lk} 
	= \frac{1}{|V_l|} \sum_{\substack{i_1 \in V_l\\ j\in V_k}} a_{i_1j} 
	= \frac{1}{|V_l|} |V_l| \sum_{j \in V_k } a_{ij} = \sum_{j \in V_k } a_{ij}.
\end{align*}
For the converse, note that $[AS]_{il}$ does not depend on $i$ but on the orbit of $i$. Namely, given $i_1, i_2 \in V_k$,
\begin{align*}
	\sum_{j \in V_l} a_{i_1j} = [AS]_{i_1l} = [Q_\text{l}(A)]_{kl} = [AS]_{i_2l} = \sum_{j \in V_l} a_{i_2j}.
\end{align*}
(ii) Given $i_1$ and $i_2$ in the same orbit $\Delta_k$, choose an automorphism $\sigma$ such that $\sigma(i_1)=i_2$. Then, since automorphisms respect the adjacency matrix, $a_{ij} = a_{\sigma(i)\sigma(j)}$ for all $i$, $j$, we have
\begin{align*}
	\sum_{j \in \Delta_l} a_{i_1j} =  \sum_{j \in \Delta_l} a_{\sigma(i_1)\sigma(j)} = \sum_{j \in \Delta_l} a_{i_2\sigma(j)} = \sum_{j \in \Delta_l} a_{i_2j},
\end{align*}
where the last equality follows from the fact that an element in a group permutes orbits, in this case, $\{j \,:\, j \in \Delta_l\} = \{\sigma(j) \,:\, j \in \Delta_l\}$. Hence the partition into orbits is equitable. 
\end{proof}

It follows immediately that the quotient eigenvalues are a subset of the eigenvalues of the parent network,
\begin{align}\label{eqn20}
	Q(A)v&=\lambda v \ \implies \ A(Sv)=SQ(A)v=\lambda Sv.
\end{align}
(Note that $Sv \neq 0$ if $v \neq 0$.) That is, the spectrum of the quotient is a subset of the spectrum of the graph, with eigenvectors lifted from the quotient by repeating entries on orbits. Moreover, we can complete an eigenbasis with eigenvectors orthogonal to the partition (adding up to zero on each orbit).
\begin{theorem}\label{thm:spectraldecomposition}
Suppose that $A$ is an $n \times n$ real symmetric matrix and $B$ the $m \times m$ quotient matrix with respect to an equitable partition $V_1\cup\ldots\cup V_m$ of the set $\{1,2,\ldots,n\}$. Let $S$ be the characteristic matrix of the partition. Then $A$ has an eigenbasis of the form 
\begin{align*}
	\left\{Sv_1,\ldots,Sv_{m},w_1,\ldots,w_{n-m}\right\}, 
\end{align*}
where $\{v_1,\ldots,v_{m}\}$ is any eigenbasis of $B$, and $S^Tw_i=0$ for all $i$.
\end{theorem}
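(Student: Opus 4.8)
The plan is to split $\mathbb{R}^n$ as an orthogonal direct sum of two $A$-invariant subspaces — the column space of $S$ and its orthogonal complement — and to produce an eigenbasis of $A$ on each piece separately, the first of the form $\{Sv_i\}$ and the second lying in $\ker(S^T)$. First I would record the elementary linear algebra underpinning this: the columns of $S$ are the indicator vectors of the partition blocks, hence linearly independent, so $\operatorname{col}(S)$ has dimension $m$ and $S$ is injective; moreover $\operatorname{col}(S)^\perp = \ker(S^T)$, which has dimension $n-m$ and consists exactly of the vectors $w$ with $S^Tw=0$. Thus $\mathbb{R}^n = \operatorname{col}(S)\oplus\operatorname{col}(S)^\perp$ orthogonally.

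Next I would establish the two invariances. By Proposition~\ref{prop:equitable}, equitability gives $AS = SB$, so $A(Sv) = S(Bv)\in\operatorname{col}(S)$ for every $v$; hence $\operatorname{col}(S)$ is $A$-invariant. Since $A$ is symmetric, the orthogonal complement of an $A$-invariant subspace is again $A$-invariant: for $w\in\operatorname{col}(S)^\perp$ and any $u\in\operatorname{col}(S)$ we have $\langle Aw,u\rangle = \langle w,Au\rangle = 0$ because $Au\in\operatorname{col}(S)$. Therefore $\operatorname{col}(S)^\perp$ is $A$-invariant as well.

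Then I would diagonalise $A$ on each summand. The restriction of $A$ to $\operatorname{col}(S)^\perp$ is a symmetric operator on that subspace (with the inherited inner product), so it admits an orthonormal eigenbasis $w_1,\dots,w_{n-m}$, each satisfying $S^Tw_i=0$ by construction. For $\operatorname{col}(S)$, observe that $B = \Lambda^{-1/2}B_{\mathrm{sym}}\Lambda^{1/2}$ with $B_{\mathrm{sym}} = \Lambda^{-1/2}S^TAS\Lambda^{-1/2}$ symmetric, so $B$ is diagonalisable and an eigenbasis $\{v_1,\dots,v_m\}$ of $B$ exists — and one may take any such. From $AS = SB$ we get $A(Sv_i) = S(Bv_i) = \lambda_i (Sv_i)$, so each $Sv_i$ is a $\lambda_i$-eigenvector of $A$; since $S$ is injective, $\{Sv_1,\dots,Sv_m\}$ is linearly independent and hence a basis of the $m$-dimensional space $\operatorname{col}(S)$. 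Concatenating the two bases and using the orthogonal direct-sum decomposition yields the claimed eigenbasis $\{Sv_1,\dots,Sv_m,w_1,\dots,w_{n-m}\}$ of $A$.

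The only real subtlety — the step I would be most careful about — is that $B$ is not itself symmetric, so its diagonalisability must be justified via the similarity to $B_{\mathrm{sym}}$ (equivalently, via the transformation $v\mapsto\Lambda^{1/2}w$ noted after Eq.~\eqref{eq:QuotientMatrixDef}). Beyond that the argument is routine, but the hypothesis that $A$ is symmetric is genuinely used twice: to obtain $\operatorname{col}(S)^\perp$ as an $A$-invariant complement, and to diagonalise the restriction of $A$ to it.
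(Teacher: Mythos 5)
Your proof is correct, but it takes a different route from the paper's. The paper fixes the lifted family $\{Sv_1,\ldots,Sv_m\}$ first and then completes it \emph{inside each eigenspace} $E_\lambda$ of $A$: using the orthogonal decomposition $\mathbb{R}^n=\bigoplus_\lambda E_\lambda$ (symmetry of $A$), it takes, within each $E_\lambda$, the orthogonal complement of the span of the lifted $\lambda$-eigenvectors, and only afterwards deduces $S^Tw_j=0$ from $w_j\perp\operatorname{Im}(S)$. You instead split $\mathbb{R}^n=\operatorname{col}(S)\oplus\operatorname{col}(S)^{\perp}$, check both summands are $A$-invariant ($AS=SB$ for the first, symmetry of $A$ for the second), and apply the spectral theorem to the restriction of $A$ to the complement, so the $w_i$ lie in $\ker(S^T)$ by construction and can even be chosen orthonormal. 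Both arguments use the symmetry of $A$ in an essential way, and both accommodate an \emph{arbitrary} eigenbasis of $B$; your version has the small additional merits of explicitly justifying that $B$ is diagonalisable (via the similarity $B=\Lambda^{-1/2}B_{\mathrm{sym}}\Lambda^{1/2}$, which the paper only records earlier in passing when comparing $Q$ and $Q_{\mathrm{sym}}$) and of avoiding the bookkeeping the paper implicitly needs, namely that vectors completed inside $E_\lambda$ are automatically orthogonal to lifted eigenvectors belonging to other eigenvalues (orthogonality of distinct eigenspaces). The paper's eigenspace-by-eigenspace formulation, on the other hand, matches more directly how the result is used later, e.g.\ in the eigendecomposition algorithm, where redundant eigenvectors are constructed within each $\lambda$-eigenspace of a motif.
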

\begin{proof}
Recall that $Sv \neq 0$ if $v\neq 0$ ($S$ lifts the vector $v$ from the quotient by repeating entries on each orbit) so the linear map 
\begin{align*}
	\mathbb{R}^m \to \mathbb{R}^n, v \mapsto Sv
\end{align*}
has trivial kernel and hence it is an isomorphism onto its image. In particular, $\mathcal B = \{Sv_1,\ldots,Sv_m\}$ is also a linearly independent set, and they are all eigenvectors of $A$, since $AS=SB$ as the partition is equitable. 
To finish the proof we need to complete $\mathcal B$ to a basis $\left\{Sv_1,\ldots,Sv_{m},w_1,\ldots,w_{n-m}\right\}$ such that each $w_j$ is an $A$-eigenvector orthogonal to all $Sv_i$. As $\mathcal B$ is a basis of $\text{Im}(S)$, this would imply $w_i \in \text{Im}(S)^\perp = \text{Ker}(S^T)$, giving $S^Tw_i=0$ for all $i$, as desired. 
Since $A$ is diagonalisable, $\mathbb{R}^n$ decomposes as an orthogonal direct sum of eigenspaces, $\mathbb{R}^n=\bigoplus_\lambda E_\lambda$. In each $E_\lambda$, we can find vectors $w_j$ such that they complete $V_\lambda=\{ Sv_i \in \mathcal B \,|\, v_i \text{ $\lambda$-eigenvector}\}$ to a basis of $E_\lambda$ and that are orthogonal to all vectors in $V_\lambda$ (consider the orthogonal complement of the subspace generated by $V_\lambda$ in $E_\lambda$). Repeating this procedure on each $E_\lambda$, we find vectors $\{w_1,\ldots,w_{n-m}\}$ as needed. 
\end{proof}

The statement and proof above holds for arbitrary matrices $A$ by replacing `eigenbasis' by `maximal linearly independent set' and removing the condition $S^Tw_i=0$. It would be interesting to know whether the condition $S^Tw_i=0$ holds for motif eigenvectors in the directed case as well (the proof above is no longer valid). 



Further to the spectral decomposition theorem above, we can give an even more precise description of the redundant spectrum: it is made of the contributions from the spectrum of each individual symmetric motif.


\begin{theorem}\label{thm:redeig}
Let $\mathcal M$ be a symmetric motif of a (possibly weighted) undirected graph $\mathcal G$. If $(\lambda,w)$ is a redundant eigenpair of $\mathcal M$ then $(\lambda, \widetilde{w})$ is a eigenpair of $\mathcal G$, where $\widetilde{w}$ is equal to $w$ on (the vertices of) $\mathcal M$, and zero elsewhere.
\end{theorem}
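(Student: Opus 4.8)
The plan is to verify the eigenvector equation $A\widetilde w = \lambda \widetilde w$ coordinate by coordinate, splitting the vertex set of $\mathcal G$ into three regions: the vertices of the motif $\mathcal M$, the vertices outside $\mathcal M$ that are adjacent to $\mathcal M$, and the remaining vertices. The key structural input is property (iii) of Theorem~\ref{thm:BSM} (which, as stated, holds for \emph{any} symmetric motif, not just basic ones): every vertex $v \notin \mathcal M$ is joined uniformly to all vertices of each orbit of $\mathcal M$, i.e.\ the weight $a_{vu}$ depends only on $v$ and the orbit of $u$, not on $u$ itself. This uniformity is exactly what makes the zero-extension $\widetilde w$ survive.

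First I would set up notation: let $M$ be the vertex set of $\mathcal M$, let $A_{\mathcal M}$ be the principal submatrix of $A$ on $M$, and recall that $(\lambda,w)$ being a \emph{redundant} eigenpair of $\mathcal M$ means $A_{\mathcal M} w = \lambda w$ and $\sum_{u \in \Delta} w_u = 0$ for each orbit $\Delta$ of $\mathcal M$. Extend $w$ to $\widetilde w$ on $V(\mathcal G)$ by zeros. Now compute $(A\widetilde w)_i$ for each $i \in V(\mathcal G)$. If $i \in M$, then $(A\widetilde w)_i = \sum_{u \in M} a_{iu} w_u + \sum_{j \notin M} a_{ij}\cdot 0 = (A_{\mathcal M} w)_i = \lambda w_i = \lambda \widetilde w_i$; here I should note that $a_{ij}$ for $i \in M$, $j \notin M$ also has the uniform form, but more simply those terms vanish because $\widetilde w_j = 0$. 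If $i \notin M$, then $\widetilde w_i = 0$, so I must show $(A\widetilde w)_i = \sum_{u \in M} a_{iu} w_u = 0$. Group this sum by orbits: $\sum_{u \in M} a_{iu} w_u = \sum_{\Delta} \sum_{u \in \Delta} a_{iu} w_u$. By property (iii), for fixed $i \notin M$ the value $a_{iu}$ is a constant $c_{i,\Delta}$ as $u$ ranges over $\Delta$, so the inner sum is $c_{i,\Delta} \sum_{u \in \Delta} w_u = c_{i,\Delta} \cdot 0 = 0$. Hence $(A\widetilde w)_i = 0 = \lambda \widetilde w_i$, completing the verification.

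The main obstacle — really the only subtlety — is making sure property (iii) is invoked in the correct generality: Theorem~\ref{thm:BSM} asserts uniform external connectivity for every symmetric motif, so no basicness hypothesis is needed, which matches the statement of Theorem~\ref{thm:redeig}. One should also be slightly careful that $\widetilde w \neq 0$ (so it is a genuine eigenvector): this holds because $w \neq 0$ on $M$. A final remark worth including: since $\mathcal M$ has, say, $n$ vertices and $k$ orbits, its adjacency submatrix $A_{\mathcal M}$ has a $k$-dimensional space of ``constant-on-orbits'' vectors (the lift of the motif quotient) whose orthogonal complement, of dimension $n-k$, consists precisely of the vectors summing to zero on each orbit; by the spectral decomposition (Theorem~\ref{thm:spectraldecomposition}) applied to $\mathcal M$, this complement is spanned by redundant eigenvectors of $\mathcal M$, giving the $n-k$ redundant eigenpairs inherited by $\mathcal G$ mentioned in the Main Text.
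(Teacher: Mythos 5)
Your proposal is correct and follows essentially the same route as the paper: verify $A\widetilde w=\lambda\widetilde w$ entrywise, with the case $i\in M$ handled by the vanishing of $\widetilde w$ outside the motif, and the case $i\notin M$ handled by grouping over orbits and invoking the uniform external connectivity of Theorem~\ref{thm:BSM}(iii) together with the zero orbit sums of a redundant eigenvector. The extra remarks (that $\widetilde w\neq 0$ and that the motif supplies $n-k$ such eigenpairs via Theorem~\ref{thm:spectraldecomposition}) are consistent with the paper's discussion.
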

\begin{proof}
Since $(\lambda,v)$ is an $\mathcal M$-eigenpair,
\begin{equation*}
	\sum_{j \in V(\mathcal M)} [A_\mathcal{M}]_{ij} w_j = \lambda w_i \quad \forall\, i \in V(\mathcal M),
\end{equation*}
where $A_\mathcal{M}$ is the adjacency matrix of $\mathcal M$. We can decompose $\mathcal M$ into orbits, 
\begin{equation*}
	V(\mathcal M) = V_1 \cup \ldots \cup V_m,
\end{equation*}
and, by the spectral decomposition theorem above applied to $\mathcal M$, $w$ is orthogonal to each orbit, that is,
\begin{equation*}
	\sum_{j \in V_i} w_j = 0 \quad \forall\, 1 \le i \le m .
\end{equation*}
We need to show that $(\lambda,\widetilde{w})$ is a $\mathcal G$-eigenpair. Let us write $A$ for the adjacency matrix of $\mathcal G$ (recall $\mathcal M$ is a subgraph so $A$ restricts to $A_\mathcal{M}$ on $\mathcal{M}$). We need to show $A \widetilde{w}=\lambda\widetilde{w}$. Given $i \in V(\mathcal G)$, we have two cases. First, if $i \in V(\mathcal M)$,
\begin{align*}
	\sum_{j \in V(\mathcal G)} [A]_{ij} \widetilde{w}_j & = 
	\sum_{j \in V(\mathcal M)} [A]_{ij} \widetilde{w}_j + \sum_{j \in V(\mathcal G) \setminus V(\mathcal M)} [A]_{ij} \widetilde{w}_j \nonumber\\ 
	&= \sum_{j \in V(\mathcal M)} [A]_{ij} w_j = \lambda w_i = \lambda \widetilde{w}_i,
\end{align*}
since $\widetilde{w}$ equals $w$ on $\mathcal M$, and is zero outside $\mathcal M$. The second case, when $i \in V(\mathcal G) \setminus V(\mathcal M)$, gives
\begin{align*}
	\sum_{j \in V(\mathcal G)} [A]_{ij} \widetilde{w}_j = 
	\sum_{j \in V(\mathcal M)} [A]_{ij} w_j,
\end{align*}
as before, and then we use the decomposition of $\mathcal M$ into orbits,
\begin{align*}
	\sum_{j \in V(\mathcal M)} [A]_{ij} {w}_j = 
	\sum_{k=1}^m \sum_{j \in V_k} [A]_{ij} {w}_j = 
	\sum_{k=1}^m \alpha_k \sum_{j \in V_k} {w}_j \,.
\end{align*}
Here we have used that the vertex $i$, outside the motif, connects uniformly to each orbit, that is, $A_{ij_1}=A_{ij_2}$ for all $j_1,j_2 \in V_k$, and we call this quantity $\alpha_k$. Finally, recall that $w$ is orthogonal to each orbit, to conclude
\begin{gather*}
	\sum_{j \in V(\mathcal M)} [A]_{ij} {w}_j =
	\sum_{k=1}^m \alpha_k \sum_{j \in V_k} {w}_j =
	0 = \lambda \widetilde{w}_i\,. \qedhere
\end{gather*}
\end{proof}
Therefore, the redundant spectrum of $\mathcal G$ is the union of the redundant eigenvalues of the symmetric motifs, together with their redundant eigenvectors localised on them. Since most symmetric motifs in real-world networks are basic, most symmetric motifs in the network representation of a network measure will be basic too. Given their constrained structure, one can in fact determine the redundant spectrum of BSMs with up to few orbits, for arbitrary undirected networks with symmetry. 

\smallskip
\noindent\textbf{Redundant spectrum of a 1-orbit BSM.}  
A BSM with one orbit is an $(\alpha,\beta)$-uniform graph $K_n^{\alpha,\beta}$ with adjacency matrix $A_n^{\alpha,\beta}=(a_{ij})$ given by $a_{ij}=\alpha$ and $a_{ii}=\beta$ for all $i\neq j$. Then $K_n^{\alpha,\beta}$ has eigenvalues $(n-1)\alpha+\beta$ (non-redundant), with multiplicity 1, and $-\alpha + \beta$ (redundant), with multiplicity $n-1$. The corresponding eigenvectors are $\mathbf{1}$, the constant vector 1 (non-redundant), and $\mathbf{e}_i$, the vectors with non-zero entries 1 at position 1, and $-1$ at position $i$, $2\le i \le n$ (redundant). This can be shown directly by computing $A_n^{\alpha,\beta}\mathbf{1}$ and $A_n^{\alpha,\beta}\mathbf{e}_i$, and noting that $\mathbf{1}$, $\mathbf{e}_2$, \ldots,  $\mathbf{e}_n$ are linearly independent (although not orthogonal) and thus form an eigenbasis. Indeed, $A_n^{\alpha,\beta}\mathbf{1}$ is the vector of column sums of the matrix $A_n^{\alpha,\beta}$, which are constant $(n-1)\alpha+\beta$, and $A_n^{\alpha,\beta}\mathbf{e}_i$ is the constant 0 vector, except possibly at positions 1, which equals $\beta -\alpha$, and $i$, which equals $\alpha-\beta$. 


\smallskip
\noindent\textbf{Redundant spectrum of a 2-orbit BSM.} 
A BSM with two orbits is a uniform join of the form 
\begin{equation}\label{eq:BSM2}
	K_n^{\alpha_1,\beta_1} \stackrel{\gamma,\delta}{\longleftrightarrow} K_n^{\alpha_2,\beta_2}\,.
\end{equation}
Define $a=\alpha_1-\beta_1$, $b=\alpha_2-\beta_2$, $c=\gamma-\delta$, and note that $c \neq 0$: otherwise $\gamma=\delta$ and we can freely permute one orbit while fixing the other, that is, this would not be a BSM with two orbits but rather two BSMs with one orbit each. As above, let $\mathbf{e}_i$ be the vector with non-zero entries 1 at position 1, and $-1$ at position $i$, for any $2\le i \le n$. 
\begin{lemma}
The following set of vectors is linearly independent
\begin{align*}
	\{(\kappa_1\, e_i \,|\, e_i),  (\kappa_2\, e_i \,|\, e_i) \mid 2 \le i \le n\} 
\end{align*}
for all $\kappa_1 \neq \kappa_2 \in \mathbb{R}$.
\end{lemma}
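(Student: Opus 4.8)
The plan is to reduce the asserted linear independence of these $2(n-1)$ vectors in $\mathbb{R}^{2n}$ to a family of decoupled $2\times 2$ homogeneous linear systems, one for each index $i$, each of which is invertible precisely because $\kappa_1 \neq \kappa_2$.

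First I would record the elementary fact that $\{\mathbf{e}_2,\ldots,\mathbf{e}_n\}$ is linearly independent in $\mathbb{R}^n$: in a vanishing combination $\sum_{i=2}^n c_i\mathbf{e}_i=\mathbf{0}$, the coordinate at position $j$ (for $j\ge 2$) equals $-c_j$, forcing every $c_j=0$. (Equivalently, these are $n-1$ vectors spanning the $(n-1)$-dimensional zero-sum subspace of $\mathbb{R}^n$.)

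Next I would take an arbitrary relation $\sum_{i=2}^n a_i(\kappa_1\mathbf{e}_i\mid\mathbf{e}_i)+\sum_{i=2}^n b_i(\kappa_2\mathbf{e}_i\mid\mathbf{e}_i)=\mathbf{0}$ in $\mathbb{R}^{2n}$ and exploit the block structure of the concatenation: reading off the first $n$ coordinates gives $\sum_{i=2}^n(a_i\kappa_1+b_i\kappa_2)\mathbf{e}_i=\mathbf{0}$, and reading off the last $n$ coordinates gives $\sum_{i=2}^n(a_i+b_i)\mathbf{e}_i=\mathbf{0}$. Applying the independence of the $\mathbf{e}_i$ to each block yields, for every $i$, the pair of scalar equations $a_i\kappa_1+b_i\kappa_2=0$ and $a_i+b_i=0$.

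Finally, for each fixed $i$ this is a homogeneous system in $(a_i,b_i)$ whose coefficient matrix has rows $(\kappa_1,\kappa_2)$ and $(1,1)$, hence determinant $\kappa_1-\kappa_2\neq 0$; so $a_i=b_i=0$ for all $i$, and the set is linearly independent. I do not anticipate a real obstacle: the only use of the hypothesis, and essentially the only content of the argument, is the non-vanishing of that $2\times 2$ (Vandermonde-type) determinant, the rest being bookkeeping with the two concatenation blocks. As a sanity check consistent with the two-orbit BSM having exactly two redundant eigenvalues, note that for fixed $i$ every vector $(\kappa\,\mathbf{e}_i\mid\mathbf{e}_i)=\kappa(\mathbf{e}_i\mid\mathbf{0})+(\mathbf{0}\mid\mathbf{e}_i)$ lies in a fixed $2$-dimensional subspace, so three or more such vectors with distinct $\kappa$'s could never be independent.
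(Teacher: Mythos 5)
Your proof is correct, and it takes a genuinely different (and more elementary) route than the paper. The paper arranges the $2(n-1)$ vectors as the rows of a block matrix built from $B_n = \begin{pmatrix} \mathbf{1} \mid -\textup{Id}_{n-1}\end{pmatrix}$, extracts a $2(n-1)\times 2(n-1)$ minor, and evaluates it with the block-determinant identity $\det\left(\begin{smallmatrix} A & B \\ C & D\end{smallmatrix}\right)=\det(AD-BC)$ valid when $C$ commutes with $D$; the whole argument rests on that single large determinant being nonzero when $\kappa_1\neq\kappa_2$. You instead take an arbitrary vanishing combination, split it into the two concatenation blocks, use the (easily verified) independence of $\mathbf{e}_2,\ldots,\mathbf{e}_n$ within each block, and thereby decouple the relation into $n-1$ separate homogeneous $2\times 2$ systems in $(a_i,b_i)$ with coefficient matrix rows $(\kappa_1,\kappa_2)$ and $(1,1)$, whose determinant $\kappa_1-\kappa_2\neq 0$ forces all coefficients to vanish. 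What your approach buys is transparency: no minor selection and no commuting-blocks determinant formula are needed, only the $2\times 2$ determinant, and the role of the hypothesis $\kappa_1\neq\kappa_2$ is laid bare. It also sidesteps a small slip in the paper's displayed computation, where the minor is evaluated as $(-1)^{n-1}(\kappa_1+\kappa_2)^{n-1}$ although the identity gives $AD-BC=(\kappa_1-\kappa_2)\,\textup{Id}_{n-1}$, i.e.\ the minor should be $(\kappa_1-\kappa_2)^{n-1}$ (which is what actually matches the stated condition $\kappa_1\neq\kappa_2$); your argument produces the correct quantity directly. Your closing observation that, for fixed $i$, all vectors $(\kappa\,\mathbf{e}_i\mid\mathbf{e}_i)$ lie in a fixed two-dimensional subspace is a nice consistency check with the fact that a two-orbit BSM contributes exactly two redundant eigenvalues.
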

\begin{proof}
Define the $(n-1)\times n$ matrix 
\[
	B_n = \begin{pmatrix} \mathbf{1} \mid - \textup{Id}_{n-1} \end{pmatrix}
\]
where $\mathbf{1}$ is a constant 1 column vector, and $\textup{Id}_{n-1}$ the identity matrix of size $n-1$. The set of vectors in the statement can be arranged in block matrix form as 
\begin{align*}
	\left( \begin{array}{c|c} \kappa_1 \, B_n & B_n\\ \hline
				\kappa_2 \, B_n & B_n \end{array}\right).
\end{align*}
This matrix has a minor of order $2(n-1)$, 
\begin{align*}
	\det \left( \begin{array}{c|c} -\kappa_1 \, \textup{Id}_{n-1} & -\textup{Id}_{n-1}\\ \hline	-\kappa_2 \, \textup{Id}_{n-1} & -\textup{Id}_{n-1} \end{array}\right).
\end{align*}
Using that $\det\left(\begin{smallmatrix} A & B \\ C & D\end{smallmatrix}\right) = AD-BC$ whenever $A$, $B$, $C$, $D$ are square blocks of the same size and $C$ commutes with $D$ \cite{horn1990matrix}, this minor equals
\begin{gather*}
	\det \begin{pmatrix}  -\kappa_1 \, \textup{Id}_{n-1} -  \kappa_2 \, \textup{Id}_{n-1}\end{pmatrix} = (-1)^{n-1}(\kappa_1 + \kappa_2)^{n-1} \neq 0 \iff \kappa_1 \neq \kappa_2. \qedhere
\end{gather*}
\end{proof}
Next we derive conditions for a vector $v_i=(\kappa e_i | e_i)$ to be an eigenvector of the uniform join \eqref{eq:BSM2}, that is, $A v_i = \lambda v_i$, for some $\lambda \in \mathbb{R}$, where $A$ is the (symmetric) adjacency matrix of the uniform join, 
\begin{align}
	A = \left(\begin{array}{c|c} A_n^{\alpha_1, \beta_1} & A_n^{\gamma, \delta} \\ \hline
  	 		 			A_n^{\gamma, \delta} & A_n^{\alpha_2, \beta_2} \end{array} \right).
\end{align}
The $j$th entry of the vector $A v$ is
\begin{align*}
	\kappa \beta_1 - \kappa \alpha_1 + \delta - \gamma &= -(\kappa a + c)& \ & j=1\\
	\kappa \alpha_1 - \kappa \beta_1 + \gamma - \delta &= \kappa a + c& \ & j=i\\
	\kappa \delta - \kappa \gamma + \beta_2 - \alpha_2 &= -(\kappa c + b) & \ & j=n+1\\
	\kappa \gamma - \kappa \delta + \alpha_2 - \beta_2 &= \kappa c + b& \ & j=n+i\\
	& 0 & \ & \text{otherwise.}
\end{align*}
Comparing these with the entries of the vector $\lambda v_i$, we obtain 
\begin{align}
	Av_i=\lambda v_i \iff \left(\kappa a + c = - \lambda \kappa \ \text{ and } \ \kappa c + b = - \lambda\right),
\end{align}
The two equations on the right-hand side are satisfied if and only if $\lambda=-\kappa c - b$ and $\kappa$ is a solution of the quadratic equation 
\begin{align}
	c \kappa^2 + (b-a)\kappa - c = 0,
\end{align}
which has two distinct real solutions
\begin{align}
	\frac{(a-b) \pm \sqrt{(a-b)^2 + 4 c^2}}{2c},
\end{align}
since $c\neq 0$, as explained above. Together with the lemma, we have shown the following.

\begin{theorem}\label{thm:RSpec}
The redundant spectrum of a symmetric motif with two orbits $K_n^{\alpha_1,\beta_1} \stackrel{\gamma,\delta}{\longleftrightarrow} K_n^{\alpha_2,\beta_2}$ is given by the eigenvalues 
\begin{align*}
	\lambda_1 &= -b-c\kappa_1 = \frac{-(a+b) + \sqrt{(a-b)^2+4c^2}}{2},  \text{ and},\\
	\lambda_2 &= -b-c\kappa_2 = \frac{-(a+b) - \sqrt{(a-b)^2+4c^2}}{2},
\end{align*}
each with multiplicity $n-1$, and eigenvectors $(\kappa_1 e_i | e_i)$ and $(\kappa_2 e_i | e_i)$ respectively, where $\kappa_1$ and $\kappa_2$ are the two solutions of the quadratic equation $c \kappa^2 + (b-a)\kappa - c = 0$, $a=\alpha_1-\beta_1$, $b=\alpha_2-\beta_2$ and $c=\gamma-\delta \neq 0$. 
\end{theorem}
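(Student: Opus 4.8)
The plan is to assemble the ingredients already in place — the block form of the adjacency matrix, the entrywise action of $A_n^{\alpha,\beta}$ on the vectors $e_i$ from the 1-orbit analysis, the quadratic in $\kappa$, and the linear-independence Lemma proved just above — and then close the argument with a dimension count using Theorem \ref{thm:spectraldecomposition}.

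First I would write the adjacency matrix of the uniform join in block form,
\[
	A = \left(\begin{array}{c|c} A_n^{\alpha_1,\beta_1} & A_n^{\gamma,\delta} \\ \hline A_n^{\gamma,\delta} & A_n^{\alpha_2,\beta_2}\end{array}\right),
\]
and recall that $A_n^{\alpha,\beta}e_i$ is zero except at position $1$, where it is $\beta-\alpha$, and position $i$, where it is $\alpha-\beta$. Multiplying $A$ by a candidate vector $v_i = (\kappa\, e_i \mid e_i)$ block by block then produces a vector supported only on positions $1, i, n+1, n+i$, with values $-(\kappa a + c)$, $\kappa a + c$, $-(\kappa c + b)$, $\kappa c + b$, where $a=\alpha_1-\beta_1$, $b=\alpha_2-\beta_2$, $c=\gamma-\delta$. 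Comparing with $\lambda v_i$ shows $Av_i = \lambda v_i$ exactly when $\kappa a + c = -\lambda\kappa$ and $\kappa c + b = -\lambda$; eliminating $\lambda = -b - c\kappa$ gives $c\kappa^2 + (b-a)\kappa - c = 0$, whose discriminant $(a-b)^2 + 4c^2$ is strictly positive since $c \neq 0$ for a genuine 2-orbit BSM, so there are two distinct real roots $\kappa_1 \neq \kappa_2$. Substituting $\kappa_j = \tfrac{(a-b)\pm\sqrt{(a-b)^2+4c^2}}{2c}$ into $\lambda_j = -b - c\kappa_j$ yields the stated closed forms.

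Each $v_i = (\kappa_j\, e_i \mid e_i)$ is a \emph{redundant} eigenvector since the entries of $e_i$ sum to zero on each orbit, so for $j\in\{1,2\}$ and $2 \le i \le n$ we obtain $2(n-1)$ redundant eigenpairs, and the preceding Lemma (with $\kappa_1 \neq \kappa_2$) guarantees these $2(n-1)$ vectors are linearly independent. To see the list is complete I would apply Theorem \ref{thm:spectraldecomposition} to the motif itself, viewed as a $2n$-vertex graph with $2$ orbits: in any eigenbasis there are exactly $2n-2$ eigenvectors $w$ with $S^Tw=0$, i.e.\ the redundant eigenspace is $(2n-2)$-dimensional. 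Since $2(n-1) = 2n-2$, our vectors span it, so the redundant spectrum is precisely $\lambda_1,\lambda_2$, each of multiplicity $n-1$ (the two are distinct because $\kappa_1\neq\kappa_2$ and $c\neq 0$, so no collapse into a single eigenvalue of multiplicity $2n-2$ occurs).

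The calculation is entirely routine; the only points requiring care are getting the signs right in the entrywise product $Av_i$, and making the completeness step airtight — ensuring that the $2n-2$ redundant dimensions guaranteed by Theorem \ref{thm:spectraldecomposition} are fully exhausted by vectors of the special form $(\kappa\, e_i \mid e_i)$, rather than hiding extra redundant eigenvalues elsewhere. Both are dispatched by the dimension count together with the linear-independence Lemma, so there is no genuine obstacle: the theorem is essentially a repackaging of the Lemma and the eigenvector computation above.
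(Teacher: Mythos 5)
Your proposal is correct and follows essentially the same route as the paper: the same block computation of $Av_i$ for $v_i=(\kappa\,e_i\mid e_i)$, the same elimination leading to $\lambda=-b-c\kappa$ and the quadratic $c\kappa^2+(b-a)\kappa-c=0$ with $c\neq 0$, and the same linear-independence lemma. Your closing dimension count (the $2(n-1)$ independent redundant eigenvectors exhaust the $2n-2$ dimensions of $\ker S^T$) merely makes explicit the completeness step the paper leaves implicit when it concludes ``together with the lemma.''
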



For unweighted graphs without loops, we recover the redundant eigenvalues for BSMs with two orbits predicted in \cite{macarthur2009spectral}, as follows. We have $\beta_1=\beta_2=0$, $\alpha_1, \alpha_2, \gamma, \delta \in \{0,1\}$ and thus $a, b \in \{0,1\}$ and $c \in \{ -1, 1\}$. If $a=b$, the quadratic equation becomes $\kappa^2-1=0$ with solutions $\kappa=\pm1$ and thus $\lambda = -b-c\kappa \in \{ -2,-1,0,1\}$. If $a \neq b$ we can assume $a=1$, $b=0$ and the quadratic $c\kappa - \kappa - c = 0$ has solutions $\varphi$ and $1-\varphi$ if $c=1$, $-\varphi$ and $\varphi-1$ if $c=-1$, where $\varphi=\frac{1+\sqrt{5}}{2}$ is the golden ratio. In either case, the redundant eigenvalues $\lambda=-b-c\kappa=-c\kappa$ are $-\varphi$ and $\varphi-1$. Altogether, the redundant eigenvalues for 2-orbit BSMs are $\{-2, -\varphi, -1, 0, \varphi-1, 1 \}$, which equals the redundant eigenvalues $\text{RSpec}_2$ in the notation of \cite{macarthur2009spectral}. 


We omit the calculation of the redundant spectrum of BSMs with three (or more) orbits, as it becomes much more elaborate, and its relevance in real-world networks is less justified (for example, less than 1\% of BSMs in each of our test networks, Table \ref{Table1}, has 3 or more orbits).

\subsection*{Applications}
\smallskip
\begin{theorem}[Communicability]\label{thm:comm}
Let $Q(A)$ be the quotient of the adjacency matrix $A$ of a network with respect to the partition into orbits of the automorphism group. Let $f(x)= \sum a_n x^n$ be an analytic function. Then $f(Q(A))=Q(f(A))$. 
\end{theorem}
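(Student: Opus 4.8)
The plan is to reduce everything to the single algebraic identity $AS = SB$ (where $B = Q(A)$ and $S$ is the characteristic matrix of the orbit partition), which holds because the orbit partition is equitable by Proposition \ref{prop:equitable}. From this I would first show, by a trivial induction on $n$, that $A^n S = S B^n$ for every $n \ge 0$: the base case $n=0$ is $IS = SI$, and the inductive step is $A^{n+1} S = A(A^n S) = (AS)B^n = S B^{n+1}$. Summing against the coefficients $a_n$ then gives $f(A)\,S = S\,f(B)$, i.e.\ $S$ intertwines $f(A)$ and $f(B)$ exactly as it intertwines $A$ and $B$.

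Next I would compute the quotient of $f(A)$ straight from its definition, $Q(f(A)) = \Lambda^{-1} S^T f(A)\, S$, where $\Lambda = \mathrm{diag}(n_1,\dots,n_m)$ is the diagonal matrix of orbit sizes. The only remaining ingredient is the elementary identity $S^T S = \Lambda$ (the columns of $S$ are indicators of the orbits, so distinct columns are orthogonal and the $k$-th column has squared norm $n_k$). Combining these,
\[
Q(f(A)) = \Lambda^{-1} S^T \big(f(A)\, S\big) = \Lambda^{-1} S^T S\, f(B) = \Lambda^{-1}\Lambda\, f(B) = f(B) = f(Q(A)),
\]
which is the claim.

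The one point that needs care — and the only real obstacle — is the passage from finite partial sums to the infinite series in the step $f(A) S = S f(B)$. Since $S$ is a fixed matrix, left and right multiplication by $S$ (and by $S^T$ and $\Lambda^{-1}$) are continuous linear maps on matrix space, so it is enough that $\sum a_n A^n$ converges, which holds because $f$ is analytic on a disc containing the spectrum of $A$. One should also note that $f(B)$ is then automatically well defined: the spectrum of $B = Q(A)$ is contained in the spectrum of $A$ (the quotient eigenvalues are a subset of the parent eigenvalues, recorded in the Main Text), so $f$ is analytic on a disc containing the spectrum of $B$ as well, and the interchange of summation and multiplication is legitimate. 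If one prefers to sidestep convergence altogether, the identity $A^nS = SB^n$ already gives the result for every polynomial $f$, and the general analytic case follows by applying it to the Taylor partial sums, which converge uniformly on a compact neighbourhood of the joint spectrum.
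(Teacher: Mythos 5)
Your proposal is correct and follows essentially the same route as the paper's proof: both rest on the equitability identity $AS = SB$, extend it to $A^nS = SB^n$, and then conclude via $Q(f(A)) = \Lambda^{-1}S^T f(A) S = \Lambda^{-1}S^TS\,f(B) = f(B)$ using $S^TS=\Lambda$. Your extra remarks on convergence and the well-definedness of $f(B)$ are a careful addition, but they do not change the argument, which matches the paper's.
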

\begin{proof}
Call $B=Q(A)$ and recall that $AS=SB$ by Proposition \ref{prop:equitable}(i), where $S$ is the characteristic matrix of the partition. Therefore, $A^nS=SB^n$ for all $n\ge 0$ and
\begin{align}
	Q\left(f(A)\right) &= \Lambda^{-1}S^T \left( \sum_{n=0}^\infty a_n A^n \right) S = \sum_{n=0}^\infty a_n \left( \Lambda^{-1} S^T A^n S \right)\\ 
	 &= \sum_{n=0}^\infty a_n \left( \Lambda^{-1} S^T S B^n \right) = \sum_{n=0}^\infty a_n B^n = f(B),
\end{align}
since $\Lambda^{-1} S^T S$ is the identity matrix. 
\end{proof}

\begin{theorem}[Shortest path distance]\label{thm:SPD}
Let $A=(a_{ij})$ be as above. Then
\begin{itemize}
\item[(i)] if $(v_1,v_2,\ldots,v_{n})$ is a shortest path from $v_1$ to $v_n$ and $\sigma \in \text{Aut}(\mathcal G)$, then $\left(\sigma(v_1),\sigma(v_2),\ldots,\sigma(v_n)\right)$ is a shortest path from $\sigma(v_1)$ to $\sigma(v_n)$;
\item[(ii)] if $(v_1,v_2,\ldots,v_n)$ is a shortest path from $v_1$ to $v_n$, and $v_1$ and $v_n$ belong to different symmetric motifs, then $v_i$ and $v_{i+1}$ belong to different orbits, for all $1 \le i \le n-1$;
\item[(iii)] if $u$ and $v$ belong to orbits $U$, respectively $V$, in different symmetric motifs, then the distance from $u$ to $v$ in $\mathcal G$ equals the distance from $U$ to $V$ in the unweighted (or skeleton) quotient $\mathcal Q$.
\end{itemize}
\end{theorem}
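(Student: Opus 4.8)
The plan is to reduce all three parts to two basic devices: transporting paths along automorphisms, and projecting or lifting paths between $\mathcal G$ and its quotient $\mathcal Q$. I would treat the parts in the stated order, since each reuses ideas from the previous one.

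Part (i) I would dispatch directly: if $p=(v_1,\dots,v_n)$ is a shortest path and $\sigma\in\mathrm{Aut}(\mathcal G)$, then $\sigma(p)$ is a path of the same length from $\sigma(v_1)$ to $\sigma(v_n)$ (automorphisms map paths to paths, as already noted); any strictly shorter path between $\sigma(v_1)$ and $\sigma(v_n)$ would, after applying $\sigma^{-1}\in\mathrm{Aut}(\mathcal G)$, yield a strictly shorter path from $v_1$ to $v_n$, contradicting minimality of $p$. So $\sigma(p)$ is shortest and, in particular, $d(v_1,v_n)=d(\sigma(v_1),\sigma(v_n))$.

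For part (ii) I would argue by contradiction. Suppose a shortest path $P=(v_1,\dots,v_n)$, of length $n-1$, contained an intra-orbit edge $v_iv_{i+1}$, both endpoints lying in an orbit $\Delta$ of a symmetric motif $\mathcal M$. Since $v_1$ and $v_n$ lie in different symmetric motifs, at most one of them lies in $\mathcal M$, so after reversing and relabelling $P$ if necessary I may assume $v_n\notin V(\mathcal M)$. By the construction of symmetric motifs via support-disjoint generator classes (Methods), any two vertices of the same orbit of $\mathcal M$ are related by an automorphism supported on $V(\mathcal M)$; I pick such a $\tau$ with $\tau(v_{i+1})=v_i$, so that $\tau(v_n)=v_n$. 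Then $\tau$ applied to the sub-path $(v_{i+1},\dots,v_n)$ is a walk $(v_i,\tau(v_{i+2}),\dots,v_n)$ from $v_i$ to $v_n$ of length $n-i-1$, and glueing it at $v_i$ onto the initial segment $(v_1,\dots,v_i)$ produces a walk from $v_1$ to $v_n$ of length $(i-1)+(n-i-1)=n-2$, strictly less than $d(v_1,v_n)$, which is impossible. This telescoping surgery, localised at the motif containing the offending edge, is the crux of this part.

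For part (iii) I would prove two inequalities, writing $U$, $V$ for the orbits of $u$, $v$. The easy bound $d^{\mathcal Q}(U,V)\le d^{\mathcal G}(u,v)$ comes from projecting a shortest path from $u$ to $v$ in $\mathcal G$ onto the orbit quotient: each edge either stays inside a single orbit (a discarded loop) or joins two distinct orbits that are then adjacent in the skeleton quotient, so the projected walk is no longer than the path. For the reverse bound, which is where the real work is, I would lift a shortest path $U=W_0,W_1,\dots,W_\ell=V$ in $\mathcal Q$ greedily: since the orbit partition is equitable (Proposition~\ref{prop:equitable}) and $W_{t-1}$ is adjacent to $W_t$ in the skeleton quotient, every vertex of $W_{t-1}$ has a neighbour in $W_t$, so starting at $v_0=u$ and choosing at each step a neighbour in the next orbit builds a walk $u=v_0,v_1,\dots,v_\ell$ with $v_\ell\in V$. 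The last point, which I expect to be the main obstacle, is that this walk ends somewhere in $V$ but not necessarily at $v$; I would fix it by choosing $\rho\in\mathrm{Aut}(\mathcal G)$ supported on the motif containing $V$ with $\rho(v_\ell)=v$, which fixes $u$ because $u$ belongs to a different, vertex-disjoint motif, and applying $\rho$ to the walk. This gives a walk from $u$ to $v$ of length $\ell=d^{\mathcal Q}(U,V)$, hence $d^{\mathcal G}(u,v)\le d^{\mathcal Q}(U,V)$, and equality follows. The one genuinely essential structural input here is the equitability of the orbit partition, which is exactly what prevents the greedy lift from stalling; the two motif-localised automorphism arguments are the clever but elementary steps.
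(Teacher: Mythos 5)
Your proposal is correct and follows essentially the same route as the paper: transporting shortest paths by automorphisms for (i), the motif-localised surgery that collapses an intra-orbit edge with an automorphism supported on the motif while fixing a far endpoint for (ii), and the projection-plus-greedy-lift argument via equitability for (iii). The only minor differences are that you obtain $d^{\mathcal Q}(U,V)\le d^{\mathcal G}(u,v)$ by discarding loops rather than invoking (ii), and you explicitly repair the endpoint of the lifted walk with an automorphism $\rho$ supported on the motif containing $V$ (fixing $u$) --- a detail the paper's proof leaves implicit --- so your write-up is, if anything, slightly more careful.
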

\begin{proof}(i) Since automorphisms are bijections and preserve adjacency, $(\sigma(v_1),\sigma(v_2),\ldots,\sigma(v_n))$ is a path from $\sigma(u)$ to $\sigma(v)$ of the same length. If there were a shorter path $(\sigma(u)=w_1,w_2,\ldots,\sigma(v)=w_m)$, $m < n$, the same argument applied to $\sigma^{-1}$ gives a shorter path $(u=\sigma^{-1}(w_1),\sigma^{-1}(w_2),\ldots,v=\sigma^{-1}(w_m))$ from $u$ to $v$, a contradiction.

(ii) Any subpath of a minimal length path is also of minimal length between its endpoints. Arguing by contradiction, there exists a subpath $p=(w_1,w_2,\ldots,w_n)$ (or $p=(w_n,w_{n-1},\ldots,w_1)$), such that $w_1$ and $w_2$ belong to the same orbit, and $w_n$ belongs to a different symmetric motif. Hence, we can find $\sigma \in \text{Aut}(\mathcal G)$ with $\sigma(w_2)=w_1$ and fixing $w_n$. This implies $\sigma(p)=(\sigma(w_1),\sigma(w_2)=w_1,\sigma(w_3),\ldots,\sigma(w_n)=w_n)$, a shortest path by (i), of length $n-1$. The subpath $(w_1,\sigma(w_3),\ldots,w_n)$ has length $n - 2$, contradicting $p$ being a minimal length path from $w_1$ to $w_n$. (The case $p=(w_n,w_{n-1},\ldots,w_1)$ is analogous.) 

(iii) Let $p=(u=v_1,v_2,\ldots,v_{n+1}=v)$ be a shortest path from $u$ to $v$, so that $d^{\mathcal G}(u,v) = n$. Let $V_k$ be the orbit containing $v_k$, for all $k$. By (ii), $V_k \neq V_{k+1}$ for all $1 \le k \le n$ thus $q=(U=V_1,V_2,\ldots,V_{n+1}=V)$ is a path in $\mathcal Q$ and $d^{\mathcal Q}(U,V) \le n$. By contradiction, assume there is a shorter path in $\mathcal Q$ from $U$ to $V$, that is, $(U=W_1,W_2,\ldots,W_{m+1}=V)$ with $m < n$. The we can construct a path in $\mathcal G$ from $u$ to $v$ of length $m$ (a contradiction), as follows. For each $1 \le i \le m$, $W_i$ is connected to $W_{i+1}$ in $\mathcal Q$, hence there is a vertex in $W_i$ connected to at least one vertex in $W_{i+1}$. Since vertices in an orbit are structurally indistinguishable, \emph{any} vertex in $W_i$ is then connected to at least one vertex in $W_{i+1}$ (formally, if $w\in W_i$ is connected to $w'\in W_{i+1}$ then $\sigma(w)\in W_i$ is connected to $\sigma(w')\in W_{i+1}$). This allows us to construct a path in $\mathcal G$ from $u$ to $v$ of length $m < n$, a contradiction.
\end{proof}










Let us call the \emph{external degree} of a vertex as the number of adjacent vertices outside the motif it belongs to. The proof of the following is straightforward from the definitions. 
\begin{theorem}[Symmetric motif Laplacian]\label{thm:laplacian}
A symmetric motif $\mathcal M$ in $\mathcal G$ induces a symmetric motif in $\mathcal L$ with adjacency matrix
\begin{align}
	L_{\mathcal M} + \left(d_1I_{m_1} \oplus \ldots \oplus d_k I_{m_k}\right),
\end{align}
where $L_{\mathcal M}$ is the ordinary Laplacian matrix of $\mathcal M$ considered as a graph on its own, and $d_1, \ldots, d_k$ are the external degrees of the $k$ orbits of $\mathcal M$ of sizes $m_1, \ldots, m_k$. (Here $I_n$ is the identity matrix of size $n$ and we use $\oplus$ to construct a block diagonal matrix.)
\end{theorem}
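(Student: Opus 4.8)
The plan is to identify the adjacency matrix of the induced motif with a principal submatrix of $L$ and then read off its entries. Since $L=D-A$ is a sparse structural network measure, it inherits every symmetry of $\mathcal G$ (Eq.~\ref{eq:StructuralFunctionMatrix}), so $\mathcal M$ induces a symmetric motif in $\mathcal L$ on the \emph{same} vertex set $M$, with the \emph{same} decomposition into orbits $V_1,\ldots,V_k$ of sizes $m_1,\ldots,m_k$; its adjacency matrix is the principal submatrix $L[M,M]$. Writing the vertices of $M$ grouped orbit by orbit, it therefore suffices to compute the entries of $L[M,M]$ and compare them with those of $L_{\mathcal M}+(d_1 I_{m_1}\oplus\cdots\oplus d_k I_{m_k})$.

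For the off-diagonal entries, if $i\neq j$ are vertices of $M$ then $[L]_{ij}=-a_{ij}$; since $\mathcal M$ is an induced subgraph, $a_{ij}=[A_{\mathcal M}]_{ij}$, so the off-diagonal part of $L[M,M]$ is exactly $-A_{\mathcal M}$, which agrees with $L_{\mathcal M}$ (the added block-diagonal matrix contributes nothing off the diagonal). For a diagonal entry $[L]_{ii}=\deg_{\mathcal G}(i)=\sum_{j\in V}a_{ij}$, split the sum as
\[
 \deg_{\mathcal G}(i)=\sum_{j\in M}a_{ij}+\sum_{j\in V\setminus M}a_{ij}=\deg_{\mathcal M}(i)+d(i),
\]
where $d(i)$, the \emph{external degree} of $i$, is the number of neighbours of $i$ outside $\mathcal M$, and $\deg_{\mathcal M}(i)=[L_{\mathcal M}]_{ii}$.

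The one substantive point is that $d(i)$ depends only on the orbit of $i$. This is the special case of Theorem~\ref{thm:BSM}(iii) applied to neighbours lying outside the motif: the automorphisms generating $\mathcal M$ have support contained in $M$, hence fix every vertex of $V\setminus M$. Thus, if $i_1,i_2$ lie in the same orbit $V_l$, pick $\sigma\in\text{Aut}(\mathcal G)$ with $\sigma(i_1)=i_2$; for each $v\in V\setminus M$ we have $\sigma(v)=v$, so $a_{i_1 v}=a_{\sigma(i_1)\sigma(v)}=a_{i_2 v}$, and summing over $v\in V\setminus M$ gives $d(i_1)=d(i_2)$. Call this common value $d_l$. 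Consequently the diagonal of $L[M,M]$ equals the diagonal of $L_{\mathcal M}$ plus the constant $d_l$ on each block $V_l$; ordering the vertices orbit by orbit, the added diagonal matrix is precisely $d_1 I_{m_1}\oplus\cdots\oplus d_k I_{m_k}$, which completes the argument.

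There is no real obstacle here — as noted, the statement is straightforward from the definitions — so the only care required is to invoke the right regularity fact, namely that the external degree is constant on orbits, which is exactly the uniform-attachment property (iii) of Theorem~\ref{thm:BSM} restricted to neighbours outside the motif. Everything else is bookkeeping about principal submatrices and induced subgraphs.
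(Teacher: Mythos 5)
Your proof is correct and is exactly the argument the paper has in mind (the paper omits it as ``straightforward from the definitions''): off-diagonal entries of $L[M,M]$ reproduce $-A_{\mathcal M}$, and the diagonal splits as internal plus external degree, with the external degree constant on orbits. Just note that in your constancy argument the automorphism $\sigma$ with $\sigma(i_1)=i_2$ should be taken from the subgroup generated by the motif's generators (whose support lies in $M$), as you indicate in the preceding sentence, since an arbitrary $\sigma\in\text{Aut}(\mathcal G)$ need not fix $V\setminus M$ pointwise.
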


Recall that each orbit in a BSM (in an undirected, unweighted graph) is either a complete or an empty graph. 
\begin{corollary}[Redundant Laplacian eigenvalues]\label{cor:laplacian}
Let $\mathcal G$ be an undirected, unweighted network. If $\mathcal M$ is a 1-orbit BSM with $m$ vertices of external degree $d$, then the redundant Laplacian eigenvalue induced by $\mathcal M$ is $d$ if $\mathcal M$ is an empty graph, and $d+m$ if $\mathcal M$ is a complete graph, in both cases with multiplicity $m-1$.   
\end{corollary}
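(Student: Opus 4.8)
The plan is to combine the description of the motif Laplacian from Theorem~\ref{thm:laplacian} with the $1$-orbit redundant spectrum computed above, and then lift to $\mathcal{G}$ via Theorem~\ref{thm:redeig}. First I would observe that, since $\mathcal{M}$ is a BSM with a single orbit in an unweighted, undirected graph, $\mathcal{M}$ considered as a graph on its own is either the empty graph $\overline{K_m}$ or the complete graph $K_m$ on its $m$ vertices (as recalled just before the statement), and every vertex of $\mathcal{M}$ has the same external degree $d$: an external vertex joins uniformly to the whole orbit by Theorem~\ref{thm:BSM}(iii), so it is adjacent to all or to none of the orbit vertices, whence all of them have equal external degree and $d$ is well defined.

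Next I would apply Theorem~\ref{thm:laplacian} with $k=1$, $m_1 = m$, and external degree $d$: the motif induced by $\mathcal{M}$ in the Laplacian network $\mathcal{L}$ has adjacency matrix $L_{\mathcal{M}} + d\,I_m$, where $L_{\mathcal{M}}$ is the ordinary Laplacian of $\mathcal{M}$. In the empty case $L_{\mathcal{M}} = 0$, so this matrix is $d\,I_m = A_m^{0,d}$, a $(0,d)$-uniform graph. In the complete case each vertex of $K_m$ has degree $m-1$, so $L_{\mathcal{M}} = m\,I_m - J_m$ (diagonal $m-1$, off-diagonal $-1$), where $J_m$ is the all-ones matrix; hence the matrix is $(m+d)\,I_m - J_m = A_m^{-1,\,m+d-1}$, a $(-1,\,m+d-1)$-uniform graph. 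In either case the induced motif in $\mathcal{L}$ is again a $1$-orbit BSM of the form $K_m^{\alpha,\beta}$.

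Then I would invoke the $1$-orbit redundant spectrum established above: $K_m^{\alpha,\beta}$ has redundant eigenvalue $\beta - \alpha$ with multiplicity $m-1$, with redundant eigenvectors $\mathbf{e}_i$ ($2 \le i \le m$), which add up to zero on the single orbit. Substituting the two cases gives $\beta - \alpha = d - 0 = d$ for the empty graph and $\beta - \alpha = (m+d-1) - (-1) = m + d$ for the complete graph, each with multiplicity $m-1$. Finally, Theorem~\ref{thm:redeig} applies to each such redundant eigenpair of $\mathcal{M}$: extending the eigenvector by zero outside $\mathcal{M}$ yields an eigenpair of $\mathcal{G}$, equivalently of $\mathcal{L}$, and these are precisely the redundant Laplacian eigenvalues induced by $\mathcal{M}$, with the stated values and multiplicity.

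There is no serious obstacle here; the only points requiring care are (a) checking that the induced motif matrix produced by Theorem~\ref{thm:laplacian} is literally of $(\alpha,\beta)$-uniform form, so that the $1$-orbit formula applies verbatim, and (b) confirming that the localised eigenvectors $\mathbf{e}_i$ are genuinely \emph{redundant} — which holds since they sum to zero on the single orbit — this being exactly the hypothesis needed to invoke Theorem~\ref{thm:redeig}.
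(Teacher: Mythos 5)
Your proof is correct and follows essentially the same route as the paper: both go through Theorem~\ref{thm:laplacian} to identify the induced motif matrix $L_{\mathcal M} + d\,I_m$ and then read off its redundant spectrum (the paper shifts the known redundant Laplacian eigenvalues $0$ or $m$ by $d$, while you equivalently cast the matrix as a $K_m^{\alpha,\beta}$ uniform graph and apply the $\beta-\alpha$ formula). Your extra care in checking that $d$ is well defined and in invoking Theorem~\ref{thm:redeig} to lift the localised eigenvectors is a fine, if slightly more explicit, rendering of the same argument.
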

\begin{proof} 
By Theorem \ref{thm:laplacian}, the Laplacian of the motif in $\mathcal L$ is $L_{\mathcal M} + d I_m$. The redundant eigenvalues of this matrix are the redundant eigenvalues of $L_{\mathcal M}$ (0 if $\mathcal M$ is empty and $m$ if $\mathcal M$ is a complete graph, in both cases with multiplicity $m-1$) plus $d$. All in all, the redundant eigenvalues for 1-orbit BSMs occur a the positive integers $\mathbb{Z}^+$.
\end{proof}

\begin{theorem}[Vertex compression]\label{thm:vertexcompression}
If $\mathbf{v}$ is a vector of length $n_\mathcal{G}$ that is constant on orbits, then 
$
	S\Lambda^{-1}S^T \mathbf{v} = \mathbf{v}.
$
\end{theorem}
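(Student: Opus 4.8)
The plan is to reduce everything to the single linear-algebra identity $S^T S = \Lambda$, which holds because the columns of $S$ are the indicator vectors of the orbits $V_1,\ldots,V_m$: the $(k,l)$ entry of $S^T S$ is $\sum_{i} [S]_{ik}[S]_{il} = |V_k \cap V_l|$, which is $n_k$ if $k=l$ and $0$ otherwise, i.e.\ exactly $\Lambda$.

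First I would record the (elementary) characterisation of vectors constant on orbits: a vector $\mathbf v \in \mathbb R^{n_\mathcal G}$ is constant on each orbit $V_k$ if and only if $\mathbf v = S\mathbf w$ for some $\mathbf w \in \mathbb R^m$, namely the vector whose $k$th entry is the common value of $\mathbf v$ on $V_k$. (The map $\mathbf w \mapsto S\mathbf w$ is precisely the ``lift'' that repeats a value on each orbit, already used in Theorem~\ref{thm:spectraldecomposition}.) Given this, the computation is immediate:
\begin{equation*}
	S\Lambda^{-1}S^T\mathbf v = S\Lambda^{-1}S^T S\mathbf w = S\Lambda^{-1}\Lambda\mathbf w = S\mathbf w = \mathbf v .
\end{equation*}

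Alternatively, and just as quickly, one can argue entrywise without introducing $\mathbf w$: if $c_k$ denotes the common value of $\mathbf v$ on $V_k$, then $[S^T\mathbf v]_k = \sum_{i\in V_k} v_i = n_k c_k$, hence $[\Lambda^{-1}S^T\mathbf v]_k = c_k$, and finally, for $i\in V_k$, $[S\Lambda^{-1}S^T\mathbf v]_i = c_k = v_i$. This also makes transparent the intended reading of the compression scheme in the Main Text: $\mathbf w = \Lambda^{-1}S^T\mathbf v$ is the length-$n_\mathcal Q$ vector of per-orbit values, $S\mathbf w$ lifts it back, and the theorem says this round trip is lossless exactly because $\mathbf v$ was constant on orbits to begin with (Eq.~\eqref{eq:vertexfun} for a structural vertex measure).

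There is essentially no obstacle here; the only thing requiring a word of justification is the equivalence ``constant on orbits $\iff$ $\mathbf v\in\operatorname{Im}(S)$'', and even that is immediate from the definition of the characteristic matrix $S$. The identity $S^TS=\Lambda$ (equivalently, that $\Lambda^{-1}S^TS$ is the identity) is the same fact already invoked in the proof of Theorem~\ref{thm:comm}, so the argument is self-contained.
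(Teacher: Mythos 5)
Your proposal is correct and follows essentially the same route as the paper's own proof: establish $S^TS=\Lambda$, observe that a vector constant on orbits lies in the image of $S$ (i.e.\ $\mathbf v = S\mathbf w$), and then cancel $\Lambda^{-1}S^TS$ in the product. The extra entrywise verification you include is a harmless restatement of the same computation.
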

\begin{proof}
First, note that $S^TS = \Lambda$ (this holds for any partition of the vertex set). 
\begin{align*}
	[S^TS]_{\alpha \beta} = \sum_i [S^T]_{\alpha i} [S]_{i \beta} 
	= \sum_i [S]_{i \alpha} [S]_{i \beta} = 
	\begin{cases} 
	0 & \text{if $\alpha \neq \beta$,}\\ 
	|V_\alpha| & \text{if $\alpha=\beta$.}
	\end{cases}
\end{align*}
As $v$ is constant on orbits, it is already of the form $v=Sw$ for some $w$. Therefore 
	$S\Lambda^{-1}S^T v = S\Lambda^{-1}S^T Sw = Sw = v$. 
\end{proof}

\begin{proposition}[Degree centrality] \label{prop:degreequotient}
Let $B=(b_{\alpha \beta})$ be the adjacency matrix of the quotient, and $V=V_1\cup\ldots\cup V_m$ the partition of the vertex set into orbits. If $i \in V_\alpha$, then $d_i^\mathcal{G}=d_\alpha^{\mathcal{Q}, \text{out}}$.
\end{proposition}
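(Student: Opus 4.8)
The plan is to compute the out-degree of a vertex of the quotient directly from the definition of the quotient matrix and then use the fact, established in Eq.~\eqref{eq:degree}, that the parent degree is constant on orbits. Concretely, recall $Q(A) = \Lambda^{-1}S^TAS$, so $b_{\alpha\beta} = \tfrac{1}{|V_\alpha|}\sum_{u\in V_\alpha}\sum_{v\in V_\beta}a_{uv}$, and the out-degree of vertex $\alpha$ in $\mathcal Q$ is $d^{\mathcal Q,\text{out}}_\alpha = \sum_{\beta} b_{\alpha\beta}$.

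The key step is to sum over $\beta$ and interchange the order of summation: since the $V_\beta$ partition $V$,
\[
	d^{\mathcal Q,\text{out}}_\alpha = \frac{1}{|V_\alpha|}\sum_{u\in V_\alpha}\sum_{v\in V}a_{uv} = \frac{1}{|V_\alpha|}\sum_{u\in V_\alpha} d_u^{\mathcal G}.
\]
Now invoke Eq.~\eqref{eq:degree}: the degree $d_u^{\mathcal G}$ is the same for every $u\in V_\alpha$, equal to $d_i^{\mathcal G}$ for the given $i\in V_\alpha$. Hence the average on the right collapses to $d_i^{\mathcal G}$, giving $d^{\mathcal Q,\text{out}}_\alpha = d_i^{\mathcal G}$, as claimed.

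An equivalent, slightly slicker route uses the equitable-partition identity $AS = SB$ from Proposition~\ref{prop:equitable}(i). Multiplying on the right by the all-ones vector $\mathbf 1_m\in\mathbb R^m$ and using $S\mathbf 1_m = \mathbf 1_n$ gives $A\mathbf 1_n = S(B\mathbf 1_m)$; the left side is the vector of parent degrees and $B\mathbf 1_m$ is the vector of quotient out-degrees, so the parent degree vector is exactly the quotient out-degree vector lifted by repeating values on orbits, which is the statement. I do not anticipate any real obstacle here: the only substantive input is that degree is a structural vertex measure, hence constant on orbits, and that has already been recorded; everything else is bookkeeping with the characteristic matrix.
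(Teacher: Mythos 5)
Your proposal is correct and is essentially the paper's own argument run in reverse: the paper starts from $d_i^{\mathcal G}$, splits the sum over the orbits $V_l$, and uses equitability of the orbit partition to identify each partial sum $\sum_{j\in V_l}a_{ij}$ with $b_{\alpha l}$, whereas you start from $\sum_\beta b_{\alpha\beta}$, reassemble the full sum over $V$, and invoke only the (slightly weaker) fact from Eq.~\eqref{eq:degree} that degree is constant on orbits. Your alternative route via $AS=SB$ and $S\mathbf{1}_m=\mathbf{1}_n$ is a valid, compact matrix restatement of the same identity, with the small bonus that it re-derives the constancy of degree on orbits rather than assuming it.
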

\begin{proof}
\begin{align*}
	d_i^\mathcal{G} &= \sum_{j \in V} a_{i j} = 
	\sum_{j \in V_1} a_{i j} + \ldots + 
	\sum_{j \in V_m} a_{i j}
	= \frac{1}{n_1} \sum_{\substack{j \in V_1\\i \in V_i}} a_{i j} + \ldots + 
	\frac{1}{n_m}\sum_{\substack{j \in V_m\\i \in V_i}} a_{i j}\\
	&= b_{i1}+\ldots+b_{im} = d_\alpha^{\mathcal{Q}, \text{out}}. \qedhere
\end{align*}
\end{proof}


\subsection*{Weighted and directed networks}
The adjacency matrix of a network can encode arbitrary weights and directions, making a general $n \times n$ real matrix $A$ the adjacency matrix of some (weighted, directed) network. The definition of automorphism group, geometric decomposition, symmetric motif, and orbit, and their properties, as they are defined only in terms of $A$, carry verbatim to arbitrarily weighted and directed networks. In this setting, a symmetry (automorphism), respects not only adjacency, but weights and directions. In particular, the automorphism group is smaller than (a subgroup of) the automorphism group of the underlying undirected, unweighted network. By introducing edge weights or directions, some symmetries will disappear, removing (and occasionally subdividing) symmetric motifs and orbits, as the next results shows.  
\begin{theorem}\label{prop:weighteddirected}
Let $A_\textup{w}=(w_{ij})$ be the adjacency matrix of an arbitrarily weighted and directed network $\mathcal G_\textup{w}$, and $A = (a_{ij})$ the adjacency matrix of the underlying undirected and unweighted network $\mathcal G$, that is, $a_{ij}=\textup{sgn}(|w_{ij}|+|w_{ji}|)$. Consider the symmetric motifs of $\mathcal G$, respectively $\mathcal G_\textup{w}$, with vertex sets $M_1$, \ldots, $M_m$, respectively $M'_1$, \ldots, $M'_{m'}$. Then for every $1 \le i \le m'$ there is a unique $1 \le j \le m$ such that $M_i' \subseteq M_j$. Similarly, each vertex orbit in $\mathcal G_\textup{w}$ is a subset of a vertex orbit in $\mathcal G$. 
\end{theorem}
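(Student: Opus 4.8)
The plan is to reduce everything to one observation: $\mathrm{Aut}(\mathcal G_\textup{w})$ is a subgroup of $\mathrm{Aut}(\mathcal G)$. Each entry $a_{ij}=\mathrm{sgn}(|w_{ij}|+|w_{ji}|)$ of the underlying unweighted matrix is a function of the pair $(w_{ij},w_{ji})$ alone, so any permutation $\sigma$ with $w_{ij}=w_{\sigma(i)\sigma(j)}$ for all $i,j$ (equivalently $A_\textup{w}P=PA_\textup{w}$) automatically satisfies $a_{ij}=a_{\sigma(i)\sigma(j)}$ for all $i,j$, i.e.\ $AP=PA$; hence $\sigma\in\mathrm{Aut}(\mathcal G)$. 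Imposing weights and directions can therefore only shrink the automorphism group.

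The orbit statement then follows at once. The orbit of a vertex $v$ in $\mathcal G_\textup{w}$, namely $\{\sigma(v):\sigma\in\mathrm{Aut}(\mathcal G_\textup{w})\}$, is contained in $\{\sigma(v):\sigma\in\mathrm{Aut}(\mathcal G)\}$, the orbit of $v$ in $\mathcal G$, because the first group sits inside the second; so the orbit partition of $\mathcal G_\textup{w}$ refines that of $\mathcal G$, with the understanding that several orbits of $\mathcal G_\textup{w}$ may lie inside a single orbit of $\mathcal G$ or be absorbed into the asymmetric core.

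For the motif statement I would go back to the construction of the geometric decomposition from a generating set (Methods, ``Network symmetry computation''). Take a generating set $Y$ of $\mathrm{Aut}(\mathcal G_\textup{w})$; since $Y\subseteq\mathrm{Aut}(\mathcal G)$, enlarge it to a generating set $X\supseteq Y$ of $\mathrm{Aut}(\mathcal G)$. Compare the bipartite support graphs $\mathcal B_Y$ on $V\cup Y$ and $\mathcal B_X$ on $V\cup X$ (edge $\{v,\tau\}$ whenever $v\in\mathrm{supp}(\tau)$): $\mathcal B_Y$ is the subgraph of $\mathcal B_X$ induced on $V\cup Y$, so each connected component of $\mathcal B_Y$ is contained in a component of $\mathcal B_X$. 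Intersecting components with $V$ and taking unions of supports ($M_i=\bigcup_{\tau\in X_i}\mathrm{supp}(\tau)$) gives that every motif $M_i'$ of $\mathcal G_\textup{w}$ is contained in a motif $M_j$ of $\mathcal G$; the index $j$ is unique because distinct $M_j$ are disjoint and $M_i'\neq\emptyset$. The same bookkeeping shows a vertex moved in $\mathcal G$ but fixed in $\mathcal G_\textup{w}$ migrates into the asymmetric core, consistent with ``motifs are removed or subdivided''.

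The main obstacle is that the geometric decomposition is defined, a priori, via a \emph{chosen} generating set, so the argument above is only convincing once this choice is controlled: one must know that a generating set of $\mathrm{Aut}(\mathcal G_\textup{w})$ can always be extended to one realising the geometric decomposition of $\mathcal G$, and, more delicately, one must rule out the opposite of subdivision --- two distinct motifs of $\mathcal G$ coalescing into a single motif of $\mathcal G_\textup{w}$. The natural remedy is to characterise the geometric decomposition invariantly, as the finest way to write the automorphism group as a direct product of pairwise support-disjoint subgroups, and then prove the required monotonicity: restricting to a subgroup can only refine this decomposition on each motif, or delete motifs, never merge two of them. Pinning down this monotonicity, rather than the essentially clerical subgroup and orbit parts, is where the real difficulty sits.
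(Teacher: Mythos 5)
Your subgroup step and the orbit statement are correct and coincide with the paper's own argument: $w_{\sigma(i)\sigma(j)}=w_{ij}$ for all $i,j$ forces $a_{\sigma(i)\sigma(j)}=a_{ij}$, hence $\textup{Aut}(\mathcal G_\textup{w})\subseteq\textup{Aut}(\mathcal G)$, and orbits of a subgroup refine orbits of the group. The genuine gap is the motif statement, and you say so yourself: you only outline a plan (extend a generating set $Y$ of $\textup{Aut}(\mathcal G_\textup{w})$ to a generating set $X$ of $\textup{Aut}(\mathcal G)$ and compare bipartite support graphs) and you defer the key lemma --- that passing to a subgroup can refine but never merge motifs --- as ``where the real difficulty sits''. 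Your plan as stated also controls neither decomposition, since the support-disjoint partition of an arbitrary generating set need not compute the geometric decomposition (e.g.\ $\{(1\,2)(3\,4),(1\,2)\}$ generates $\langle(1\,2)\rangle\times\langle(3\,4)\rangle$ but forms a single support-disjoint class); so the motif half of the theorem is not established by your proposal. For comparison, the paper's proof takes \emph{essential} generating sets $X$ of $\textup{Aut}(\mathcal G)$ and $X'$ of $\textup{Aut}(\mathcal G_\textup{w})$, writes each $x'\in X'$ in the factorisation $\textup{Aut}(\mathcal G)=H_1\times\cdots\times H_m$ into geometric factors, claims that essentiality forces all but one component of $x'$ to be trivial (so $\textup{supp}(x')$ lies in a single $M_j$), and then pins down a unique $M_j$ for the whole class $X'_i$ by a chain argument along non-support-disjoint generators.

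You should also know that the difficulty you flagged is not merely technical: the monotonicity you would need (and the step the paper attributes to essentiality) fails in general, because weights on edges running \emph{between} two motifs of $\mathcal G$ can couple them. Take $\mathcal G=K_{2,3}$ with parts $\{1,2\}$ and $\{3,4,5\}$, whose geometric decomposition has the two motifs $M_1=\{1,2\}$ and $M_2=\{3,4,5\}$, and weight the edges by $w_{13}=w_{24}=2$, $w_{14}=w_{23}=3$, $w_{15}=w_{25}=4$. Any weight-preserving automorphism lies in $S_{\{1,2\}}\times S_{\{3,4,5\}}$, and a direct check gives $\textup{Aut}(\mathcal G_\textup{w})=\{\textup{id},(1\,2)(3\,4)\}$. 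Every generating set must contain $(1\,2)(3\,4)$, so the unique symmetric motif of $\mathcal G_\textup{w}$ is its support $\{1,2,3,4\}$, contained in neither $M_1$ nor $M_2$; moreover this essential generator decomposes as $h_1h_2$ with both components nontrivial, exactly the configuration the paper's proof asserts cannot occur. (The orbit statement survives, as it must: $\{1,2\},\{3,4\},\{5\}$ refine $\{1,2\},\{3,4,5\}$.) So no argument along your lines --- nor a direct repair of the paper's --- can prove the motif claim unconditionally; one needs an additional hypothesis excluding such inter-motif weight couplings, or a different definition of the symmetric motifs of $\mathcal G_\textup{w}$, with only the orbit half holding in full generality.
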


\begin{proof}
First we show that the automorphism group of $\mathcal G_\textup{w}$ is a subgroup of the automorphism group of $\mathcal G$. If $\sigma \colon V \to V$ is a permutation of the vertices, then 
\[
	w_{\sigma(i)\sigma(j)} = w_{ij} \implies a_{\sigma(i)\sigma(j)} = a_{ij}
\]
by considering two cases: $w_{ij} \neq 0$ implies $w_{\sigma(i)\sigma(j)} \neq 0$ which gives $a_{ij} = a_{\sigma(i)\sigma(j)} = 1$; $w_{ij} = 0$ implies $w_{\sigma(i)\sigma(j)} = 0$ which gives $a_{ij} = a_{\sigma(i)\sigma(j)} = 0$ (note $w_{ij} \neq 0 \iff a_{ij} =1$). Hence $\textup{Aut}(\mathcal G_\textup{w}) \subset \textup{Aut}(\mathcal G)$, which immediately gives the result on orbits. 

Let us choose essential\cite{macarthur2008symmetry} sets of generators $S$, respectively $S'$, of $\textup{Aut}(\mathcal G)$, respectively $\textup{Aut}(\mathcal G_\textup{w})$, with support-disjoint partitions
\begin{align*}
	X = X_1 \cup \ldots \cup X_m, \text{ respectively} \
	X' = X'_1 \cup \ldots \cup X'_{m'}.
\end{align*}
It is enough to prove the statement for these sets: given $i$, there is unique $j$ such that $X'_i \subseteq X_j$. Let $x' \in X'_i \subseteq \textup{Aut}(\mathcal G_\textup{w}) \subseteq \textup{Aut}(\mathcal G)$ thus we can write $x' = h_1\cdot\ldots\cdot h_m$ with $h_k \in H_k = \langle X_k \rangle$. Since $X'$ is an essential set of generators, there is an index $j$ such that $h_k=1$ (the identity, or trivial permutation) for all $k \neq j$, so that $x'=h_j$. Given any other $y' \in X'_i$, the same argument gives $y'=h_l$ for some $1\le l \le m$. We claim $j=l$, as follows. The partition of $X$, respectively $X'$, above are the equivalence classes of the equivalence relation generated by $\sigma \sim \tau$ if $\sigma$ and $\tau$ are not support-disjoint permutations. Since $x', y'$ are in the same equivalence class, so are $h_j$ and $h_l$ and thus $j=l$. 
\end{proof}
The same result applies to networks with other additional structure, not necessarily expressed in terms of the adjacency matrix, such as arbitrary vertex or edge labels, by restricting to automorphisms preserving the additional structure. We obtain fewer symmetries, and a refinement of the geometric decomposition, symmetric motifs, and orbits as above. The results in this paper, although applicable in theory, become less useful in practice as further restrictions are imposed, reducing the number of available network symmetries.


\subsection*{Asymmetric measures}
In the case of an asymmetric network measure ($F(i,j)\neq F(j,i)$), its network representation $F(\mathcal G)$ is directed even if $\mathcal G$ is not. However, $F(\mathcal G)$ still inherits all the symmetries of $\mathcal G$, that is, every automorphism of $\mathcal G$ respects weights \emph{and} edge directions in $F(\mathcal G)$. Therefore, $F(\mathcal G)$ has the same symmetric motifs (as vertex sets) and orbits as $\mathcal G$, and the structural results in this paper apply verbatim.

\section*{Data and code availability}
The datasets analysed during the current study are available at the locations stated in the caption to Table \ref{Table1}. The datasets generated during the current study can be found at \url{https://doi.org/10.6084/m9.figshare.11619792} and the code used to process the datasets at \url{https://bitbucket.org/rubenjsanchezgarcia/networksymmetry/}.

{\scriptsize
\bibliography{NetworkFunctions}}

\section*{Acknowledgements}
Special thanks to Ben MacArthur for support and advice during the writing of this article. The Newton Institute in Cambridge supported the author during the programme `Theoretical foundations for statistical network analysis' (EPSRC grant  EP/K032208/1). Conor Wild's undergraduate project `Topics in Network Symmetries' contains (unpublished) results about the graph Laplacian in the context of symmetries generalised here. Thanks to Yamir Moreno and Emanuele Cozzo, whose `simple' question inadvertently prompted this lengthly answer.

\section*{Author contributions}
The author conceived the project, carried out the mathematical and numerical analysis, and wrote the article. 

\section*{Competing interests}
The author declares no competing interests.

\end{document}